\newcommand{\presym}[1]{S_{\eta^{*}}^{(I,J)}(#1)}
\newtheorem{theorem}{Theorem}
\newtheorem{conjecture}{Conjecture}
\newtheorem{corollary}{Corollary}
\newtheorem{lemma}[theorem]{Lemma}
\newtheorem{proposition}[theorem]{Proposition}
\newenvironment{proof}[1][Proof]{\begin{trivlist}
\item[\hskip \labelsep {\bfseries #1}]}{\end{trivlist}}
\begin{document}
\title{Some Properties of Macdonald Polynomials with \\ Prescribed Symmetry.}
\author{Wendy Baratta}
\maketitle

\begin{abstract}
The Macdonald polynomials with prescribed symmetry are obtained from the nonsymmetric Macdonald polynomials via the operations of $t$-symmetrisation, $t$-antisymmetrisation and normalisation. Motivated by corresponding results in Jack polynomial theory we proceed to derive an expansion formula and a related normalisation. Eigenoperator methods are used to relate the symmetric and antisymmetric Macdonald polynomials, and we discuss how these methods can be extended to special classes of the prescribed symmetry polynomials in terms of their symmetric counterpart. We compute the explicit form of the normalisation with respect to the constant term inner product. Surpassing our original motivation, this is used to provide a derivation of a special case of a conjectured $q$-constant term identity. \end{abstract}
\subsubsection*{}
\small{\textbf{2000 Mathematics Subject Classification:} Primary 81Q08, Secondary 81V08. \\ \textbf{Keywords:} Macdonald polynomials; prescribed symmetry; constant term identities.}

\normalsize
\section{Introduction\label{introduction}}
\subsection{Background and overview}
Nonsymmetric Macdonald polynomials were first introduced in 1994 \cite{affine, cherednik}, six years after Macdonald's paper \cite{macdonaldagain} introducing what are now referred to as symmetric Macdonald
polynomials $P_{\kappa }\left( z;q,t\right) .$ The nonsymmetric Macdonald
polynomials $E_{\eta }\left( z;q,t\right)$ can be regarded as building blocks of their symmetric
counterparts, as $t$-symmetrisation of $E_{\eta }$ gives $P_{\eta ^{+}}$. Generalising this action by applying a combination of $t$-symmetrising and
$t$-antisymmetrising operators to $E_{\eta }$ generates the Macdonald polynomials with
prescribed symmetry. \bigskip \\
A polynomial is $t$-symmetric with respect to $z_i$ if $T_if(z)=tf(z)$ and $t$-antisymmetric with respect to $z_i$ if $T_if(z)=-f(z)$. The $t$-symmetrisation and $t$-antisymmetrisation operators are defined, respectively, by
\begin{equation}
U^{+}:=\sum_{\sigma \in S_{n}}T_{\sigma },\;\;\;\;\text{and}\;\;\;\;
U^{-}:=\sum_{\sigma \in S_{n}}\left(-\frac{1}{t}\right)^{l\left( \sigma \right)
}T_{\sigma }. \label{1a}
\end{equation}%
Here $S_{n}$ denotes the symmetric group on $n$ symbols. Also, with $s_i$ denoting the transposition operator with the action on functions 
\begin{equation*}
s_{i}f\left( z_{1},...,z_{n}\right) =f\left(
z_{1},...,z_{i+1},z_{i},...,z_{n}\right), \hspace{1cm}
(i=1,...,n-1)
\end{equation*}
and $\sigma :=s_{i_{l(\sigma)}}...s_{i_{1}},$ the operator $T_{\sigma }$ is
specified by%
\begin{equation}
T_{\sigma }:=T_{i_{l(\sigma)}}...T_{i_{1}},  \label{tstring}
\end{equation}%
where 
\begin{equation*}
T_{i}:=t+\frac{tz_{i}-z_{i+1}}{z_{i}-z_{i+1}}(s_{i}-1),\hspace{1cm}
(i=1,...,n-1).
\end{equation*}%
Note that when $t=1$ the operators $U^{+}$ and $U^{-}$ reduce to the standard
symmetrising and antisymmetrising operators. \bigskip \\
The study of Macdonald polynomials with prescribed symmetry began in 1999 \cite{exchangeterms, marshallthesis} and was initially motivated by the analogous results in Jack polynomial theory \cite{genweight, peter4}. Nonsymmetric Jack polynomials $E_{\eta}(z;\alpha)$ are the limit $q=t^{\alpha}$, $t\rightarrow1$ of Macdonald polynomials. Jack polynomials are eigenfunctions of the operator stemming from the type $A$ Calogero-Sutherland quantum many body system. Cases where the system is multicomponent, containing both bosons and fermions, requires eigenfunctions that are symmetric or antisymmetric, respectively, with respect to certain sets of variables. This requirement lead naturally to the introduction of Jack polynomials with prescribed symmetry \cite{peter4,kato}.
\bigskip \\
Using the more extensively developed Jack theory as motivation we continue the study of prescribed symmetry Macdonald polynomials. Our first result is an expansion formula for the prescribed symmetry Macdonald polynomials in terms of the nonsymmetric Macdonald polynomials (Proposition \ref{proposition 2.1}). Following this we determine the normalisation required to obtain the prescribed symmetry Macdonald polynomial from the symmetrisation of the nonsymmetric polynomial (Proposition \ref{proposition 2.2}). In Section \ref{special forms} eigenoperator methods are used to relate the symmetric and antisymmetric Macdonald polynomials, thus providing an alternate proof for a result of Marshall \cite{marshallmacdonald}. Our final investigation is of the inner product of prescribed symmetry Macdonald polynomials, giving general explicit formulas (Theorem \ref{innerstuff}) and then considering special cases where the antisymmetric components are of specific forms. Although originally motivated by the analogous Jack theory \cite{exchangeterms} these results have applications in $q$-constant terms identities, which are discussed in the second half of Section \ref{prescribed symmetry
polynomials and the inner product}.
\bigskip \\
Before explicitly defining the Macdonald polynomials with prescribed symmetry, we give the required
background on the nonsymmetric Macdonald polynomials. 
\subsection{Nonsymmetric Macdonald polynomials}

The nonsymmetric Macdonald polynomials $E_{\eta }:=E_{\eta }\left( z;q,t\right) $ are
polynomials of $n$ variables $z=\left( z_{1},...,z_{n}\right) $ having
coefficients in the field $\mathbb{Q}\left( q,t\right) $ of rational
functions of the indeterminants $q$ and $t$. The compositions $\eta :=\left(
\eta _{1},...,\eta _{n}\right) $ of non-negative integers components $\eta _{i}$
label these polynomials. The nonsymmetric Macdonald polynomials can be
defined, up to normalisation, as the unique simultaneous polynomials eigenfunctions of
the commuting operators 
\begin{equation}
Y_{i}=t^{-n+1}T_{i}...T_{n-1}\omega T_{1}^{-1}...T_{i-1}^{-1},\hspace{1cm}
(i=1,...,n)  \label{Yi}
\end{equation}%
satisfying the eigenvalue equations 
\begin{equation}
Y_{i}E_{\eta }\left( z;q,t\right) =\overline{\eta }_{i}E_{\eta }\left(
z;q,t\right) .  \label{eigen}
\end{equation}%
In (\ref{Yi}) $\omega $ is given by $\omega :=s_{n-1}...s_{1}\tau _{1}$, where the operator $\tau _{i}$ has the action on functions 
\begin{equation*}
\left( \tau _{i}f\right) \left( z_{1},...,z_{n}\right) :=f\left(
z_{1},...,qz_{i},...,z_{n}\right)
\end{equation*}%
and so corresponds to a $q$-shift of the variable $z_{i}.$ The eigenvalue $%
\overline{\eta }_{i}$ in $\left( \ref{eigen}\right) $ is given by 
\begin{equation}
\overline{\eta }_{i}:=q^{\eta _{i}}t^{-l_{\eta }^{\prime }\left( i\right) },
\label{eigenvalue}
\end{equation}%
where%
\begin{equation}
l_{\eta }^{\prime }(i):=\#\left\{ j<i;\eta _{j}\geq \eta _{i}\right\}
+\#\left\{ j>i;\eta _{j}>\eta _{i}\right\} . \label{legcolength}
\end{equation}%
The nonsymmetric Macdonald polynomials are of the triangular form 
\begin{equation}
E_{\eta }\left( z;q,t\right) :=z^{\eta }+\sum\limits_{\nu \prec \eta
}b_{\eta \nu }z^{\nu },  \label{forma}
\end{equation}%
for coefficients $b_{\eta \nu }\in\mathbb{Q}\left( q,t\right) $. The coefficient of $z^{\eta}:=z_{1}^{\eta _{1}}...z_{n}^{\eta _{n}}$ is chosen to be unity as a normalisation. The ordering $\prec $ is a partial
ordering on compositions having the same modulus, where $\left\vert \eta
\right\vert :=\Sigma _{i=1}^{n}\eta _{i}$ denotes the modulus of $\eta $.
The partial ordering is defined by%
\begin{equation*}
\mu \prec \eta \text{ iff }\mu ^{+}<\eta ^{+}\text{ or in the case }\mu
^{+}=\eta ^{+},\text{ }\mu <\eta
\end{equation*}%
where $\eta ^{+}$ is the unique partition obtained by permuting the
components of $\eta$, and $\mu <\eta $ iff $\mu \neq \eta $ and $\Sigma
_{i=1}^{p}\left( \eta _{i}-\mu _{i}\right) \geq 0$ for all $1\leq p\leq n.$
The action of $T_{i}$ on $E_{\eta }$, for $1\leq i \leq n-1$, is given explicitly by \cite{peter3} 
\begin{equation}
T_{i}E_{\eta }\left( z\right) =\left\{ 
\begin{tabular}{ll}
$\frac{t-1}{1-\delta _{i,\eta }^{-1} } E_{\eta
}+tE_{s_{i}\eta }$, & $\eta _{i}<\eta _{i+1}$ \\ 
$tE_{\eta }$, & $\eta _{i}=\eta _{i+1}$ \\ 
$\frac{t-1}{1-\delta _{i,\eta }^{-1} } E_{\eta }+\frac{( 1-t\delta _{i,\eta })( 1-t^{-1}\delta _{i,\eta })}{\left( 1-\delta _{i,\eta }\right)^{2}}E_{s_{i}\eta}$

& $\eta _{i}>\eta _{i+1}$,%
\end{tabular}%
\right.  \label{TiEn}
\end{equation}%
where $\delta _{i,\eta }=\overline{\eta _{i}}/\overline{%
\eta} _{i+1}.$ 
\bigskip \\
Alternatively, nonsymmetric Macdonald polynomials can be characterised as multivariate polynomials of the structure (\ref{forma}) orthogonal with respect to the inner product $\left\langle f,g\right\rangle _{q,t}$, defined by
\begin{equation}
\left\langle f,g\right\rangle _{q,t}:=\text{CT}\left( f( z;q,t)
g( z^{-1},q^{-1},t^{-1}) W\left( z\right) \right).
\label{inner product}
\end{equation}%
In (\ref{inner product}) CT$\left( f\right) $ denotes the constant term with respect to $z$ of
any formal Laurent series $f$ and 
\begin{equation*}
W\left( z\right) :=W\left( z;q,t\right) :=\prod_{1\leq i<j\leq n}\frac{%
\left( \frac{z_{i}}{z_{j}};q\right) _{\infty }\left( q\frac{z_{j}}{z_{i}}%
;q\right) _{\infty }}{\left( t\frac{z_{i}}{z_{j}};q\right) _{\infty }\left(
qt\frac{z_{j}}{z_{i}};q\right) _{\infty }},
\end{equation*}%
with the Pochhammer symbol defined by $\left( a;q\right) _{\infty }:=\prod_{j=0}^{\infty }\left( 1-aq^{j}\right)$.
\bigskip \\
This scalar product, introduced by Cherednik, is linear and positive
definite. Macdonald showed that  \cite{affine}
\begin{equation*}
\left\langle E_{\eta }\left(q,t\right) ,E_{\nu }\left( q,t\right) \right\rangle _{q,t}=\delta_{\eta,\nu}\text{ }\mathcal{N}_{\eta}. \label{innerorth}
\end{equation*}
We will have future use for the explicit value of $\mathcal{N}_{\eta}$. For this a number of quantities dependent on $\eta$ must be introduced. For each node $s=(i,j)\in \text{diag}(\eta)$ we define the arm length, $a_{\eta}(s):=\eta_{i}-j$, arm colength, $a'_{\eta}(s):=j-1$, leg length, $l_{\eta}(s):=\#\{k<i:j\leq\eta_{k}+1\leq\eta_{i}\}+\#\{k<i:j\leq\eta_{k}\leq\eta_{i}\}$ and leg colength $l'_{\eta}(s)$, given by (\ref{legcolength}). From these we define \cite{sahi}
\begin{align*}
d_{\eta }&:=d_{\eta }\left( q,t\right)=\prod_{s\in \text{diag}(\eta )}\left(
1-q^{a_{\eta }\left( s\right) +1}t^{l_{\eta }\left( s\right) +1}\right) 
\text{, \ \ } \\
d_{\eta }^{\prime }&:=d_{\eta }^{\prime }\left( q,t\right) =\prod_{s\in \text{diag}%
(\eta )}\left( 1-q^{a_{\eta }\left( s\right) +1}t^{l_{\eta }\left( s\right)
}\right) , \\
e_{\eta }&:=e_{\eta }\left( q,t\right) =\prod_{s\in \text{diag}(\eta )}\left(
1-q^{a_{\eta }^{\prime }\left( s\right) +1}t^{n-l_{\eta }^{\prime }\left(
s\right) }\right) \text{ }, \\
e_{\eta }^{\prime }&:=e_{\eta }^{\prime }\left( q,t\right) =\prod_{s\in \text{diag}(\eta
)}\left( 1-q^{a_{\eta }^{\prime }\left( s\right) +1}t^{n-1-l_{\eta }^{\prime
}\left( s\right) }\right).
\end{align*}
In this notation the explicit formula for $\mathcal{N}_\eta$ is given by [see, e.g., \ref{cherednik}]
\begin{equation}
\mathcal{N}_\eta=\frac{d'_{\eta } e_{\eta
}}{d_{\eta }e_{\eta }^{\prime }}{\left\langle
1,1\right\rangle _{q,t}}. \label{12today}
\end{equation}
In later sections we use the specialisation $t=q^{k},\text{ } k \in \mathbb{Z} ^{+}$. In this specialisation the weight function $W(z)$ reduces to 
\begin{equation*}
W(z;q,q^{k})=\prod_{1\leq i<j\leq n}\left(\frac{z_{i}}{z_{j}};q\right)_{k}\left(q\frac{z_{j}}{z_{i}};q\right)_{k},
\label{new weight}
\end{equation*}
where
\begin{equation}
(a;q)_k:=\Pi_{j=0}^{k-1}(1-aq^j), \hspace{0.5cm} k>0 \label{9a}
\end{equation} 
which is a Laurent polynomial. Furthermore \cite{genweight}
\begin{equation}
\left\langle 1,1\right\rangle _{q,q^{k}}=\frac{[nk]_{q}!}{[k]_{q}!^{n}}, \label{13a}
\end{equation}%
where the $q$-factorial is given in terms of $q$-numbers \cite{qtheory} $[m]_{q}:=(1-q^{m})/(1-q)$ by 
\begin{equation*}
\left[ k\right] _{q}! :=[1]_{q}[2]_{q}\ldots[k]_{q}.  \label{q factorial}
\end{equation*}
Concluding the preliminary material, we now proceed to formally introduce the Macdonald polynomials with prescribed symmetry.

\section{Macdonald Polynomials with Prescribed Symmetry and the Required Operator}

\subsection{The operator O$_{I,J}$}

We begin our investigation into the Macdonald polynomials with prescribed symmetry by introducing a particular symmetrising operator $O_{I,J}$. The sets $I$ and $J$ represent the variables which the operator $O_{I,J}$ symmetrises and antisymmetrises with respect to. Explicitly \begin{equation}
T_{i}[O_{I,J}f\left( z\right) ]=tO_{I,J}f\left( z\right) \text{ for }i\in I  
\label{symmetric oij}
\end{equation}%
and%
\begin{equation}
T_{j}[O_{I,J}f\left( z\right) ]=-O_{I,J}f\left( z\right) \text{ for }j\in J.
\label{asymmetric oij}
\end{equation}%
\bigskip \\%
 For $O_{I,J}$ to be well defined $I$ and $J$ must be
disjoint subsets of $\left\{ 1,...,n-1\right\} $, such that%
\begin{equation*}
i-1,i+1\not\in J\text{ for }i\in I\text{ and }j-1,j+1\not\in I\text{ for }%
j\in J.
\end{equation*}%
In many cases we require the set $J$ to be decomposed into disjoint sets of consecutive integers, to be denoted $J_1,J_2,\ldots,J_s$. For example, with $J=\{1,2,5,6,7\}$, $J_1=\{1,2\}$ and $J_2=\{5,6,7\}$. Related to this we also require sets $\widetilde{J}_{j}:=J_{j}\cup \left\{ \max
\left( J_{j}\right) +1\right\} $ and $\widetilde{J}=\cup \widetilde{J}_{s}.$
\bigskip \\
Since we are symmetrising with respect to a subset of variables, in contrast to the construction of $U^+$ and $U^-$ given in (\ref{1a}), we do not want to
sum over all $\sigma \in S_{n}$. Instead we introduce $W_{I\cup J}:=\left\langle s_{k};\text{ }k\in I\cup
J\right\rangle ,$ a subset of $S_{n}$ such that each $\omega \in W_{I\cup J}$,
\begin{equation}
\omega =\omega _{I}\omega _{J}, \hspace{0.25cm} \text{ with }\omega _{J}\in W_{J} 
\text{ and } 
\omega _{I}\in W_{I}, \label{12'} 
\end{equation}
has the property that $\omega \left( i\right) =i$ if $i\not\in \widetilde{I}%
\cup \widetilde{J}.$%
\bigskip \\%
The operator $O_{I,J}$ is then specified by 
\begin{equation*}
O_{I,J}:=\sum_{\omega \in W_{I\cup J}}\left( -\frac{1}{t}\right) ^{l\left(
\omega _{J}\right) }T_{\omega }, \label{12a}
\end{equation*}
where $T_\omega$ is given by (\ref{tstring}).%

\subsection{The polynomial $\presym{z}$}
To motivate the introduction of the prescribed symmetry Macdonald polynomials we first consider the symmetric and antisymmetric Macdonald polynomials. These are denoted by $P_\kappa$ and $S_{\lambda+\delta}$, respectively, where $\kappa$ and $\lambda$ are partitions and $\delta:=(n-1,\ldots,1,0)$. It is well known that these polynomials can be generated from nonsymmetric Macdonald polynomials via a process of symmetrisation and antisymmetrisation. Thus to generate $P_\kappa$ one would symmetrise any $E_\eta$ for which there exists a permutation $\sigma \in S_n$ such that $\sigma \eta = \kappa$. Similarly, to generate $S_{\lambda+\delta}$ one would antisymmetrise any $E_\mu$ such that there exists a permutation $\rho \in S_n$ where $\rho \mu = \lambda + \delta$. Explicitly\begin{equation*}
U^+E_\eta=b_\eta P_\kappa \hspace{0.5cm}\text{and} \hspace{0.5cm} U^-E_\mu=b'_\mu S_{\lambda+\delta},
\end{equation*}
for some non-zero $b_\eta, b'_\mu\in\mathbb{Q}\left( q,t\right) $.
It follows quite naturally that the Macdonald polynomial with prescribed symmetry, denoted by $
S_{\eta ^{\ast }}^{\left( I,J\right) }\left( z;q,t\right)$, a polynomial $t$-symmetric with respect to the set $I$ and $t$-antisymmetric with respect to the set $J$, will be labeled by a composition $\eta^*$ such that 
\begin{equation*}
\eta _{i}^{\ast }\geq \eta _{i+1}^{\ast }\text{ for all }i\in I\text{ and }%
\eta _{j}^{\ast }>\eta _{j+1}^{\ast }\text{ for all }j\in J.
\end{equation*}
Such a polynomial can be generated by applying our prescribed symmetry operator $O_{I,J}$ to any $E_\eta$ such that there exists a $\sigma \in W_{I \cup J}$ with $\sigma \eta=\eta^*$. That is   
\begin{equation}
O_{I,J}E_{\eta }\left( z;q,t\right) =a_{\eta }^{\left( I,J\right) }S_{\eta
^{\ast }}^{\left( I,J\right) }\left( z\right),  \label{2.13}
\end{equation}%
for some non-zero $a_{\eta }^{\left( I,J\right) }$. This uniquely specifies  $%
S_{\eta ^{\ast }}^{\left( I,J\right) }\left( z\right) $ up to normalisation; for the latter we require that the coefficient of $z^{\eta^*}$ in the monomial expansion equals unity as in (\ref{forma}).
\bigskip \\
Our first task is to find the explicit formula for the proportionality $a_{\eta }^{\left( I,J\right) }$ in (\ref{2.13}). We do this by first computing the expansion formula of $S_{\eta^{*}}(z^{-1};q^{-1},t^{-1})$ in terms of $E_{\eta}(z^{-1};q^{-1},t^{-1})$, a result which is of independent interest. We begin by deriving an explicit formula for the action of $T_{i}$ on $E_{\eta}(z^{-1};q^{-1},t^{-1})$ analogous to (\ref{TiEn}). This is done using the Cauchy formula for the nonsymmetric Macdonald polynomials, \cite{mimachi}
\begin{equation*} 
\Omega( x,y;q,t) :=\sum_{\eta }\frac{d_{\eta }}{d_{\eta
}^{\prime }}E_{\eta }\left( x;q,t\right) E_{\eta }(
y;q^{-1},t^{-1}),   \label{cauchy gen function}
\end{equation*}%
the result \cite{mimachi}
\begin{equation}
T_{i}^{(x)}\Omega( x,y^{-1};q,t) =T_{i}^{(y)}\Omega(
x,y^{-1};q,t). \label{3.4.1}
\end{equation}%
and (\ref{TiEn}) itself. In (\ref{3.4.1}) the superscripts denote which variables the respective operators act
upon.

\begin{proposition}
\label{TiEnInv}For $1\leq i\leq n-1$ we have 
\begin{eqnarray}
T_{i}E_{\eta }\left( z^{-1};q^{-1},t^{-1}\right) \hspace{12cm} \notag \\ =\left\{ 
\begin{tabular}{ll}
$\frac{t-1}{1-\delta _{i,\eta }^{-1}} E_{\eta }(
z^{-1};q^{-1},t^{-1}) +\frac{d_{s_{i}\eta }d_{\eta }^{\prime }}{d_{s_{i}\eta
}^{\prime }d_{\eta }}\frac{( 1-t\delta _{i,\eta }
)( 1-t^{-1}\delta _{i,\eta }) }{(
1-\delta _{i,\eta }) ^{2}}E_{s_{i}\eta }(
z^{-1};q^{-1},t^{-1}) $ & $\eta _{i}<\eta _{i+1}$ \\ 
$tE_{\eta }( z^{-1};q^{-1},t^{-1}) $ & $\eta _{i}=\eta _{i+1}$ \\ 
$\frac{t-1}{1-\delta _{i,\eta }^{-1}} E_{\eta }(
z^{-1};q^{-1},t^{-1}) +t\frac{d_{s_{i}\eta }d_{\eta }^{\prime }}{d_{s_{i}\eta
}^{\prime }d_{\eta }}E_{s_{i}\eta }( z^{-1};q^{-1},t^{-1}) $ & $\eta _{i}>\eta
_{i+1}$%
\end{tabular}%
,\right.  \label{TiEnInv1}
\end{eqnarray}
\end{proposition}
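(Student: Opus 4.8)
The plan is to exploit the functional equation (\ref{3.4.1}) for the Cauchy kernel: since $T_i^{(x)}$ acts on $E_\eta(x;q,t)$ through the known rule (\ref{TiEn}), the operator $T_i^{(y)}$ acting on $E_\eta(y^{-1};q^{-1},t^{-1})$ can be read off by comparing coefficients. Write $\widetilde E_\eta(y):=E_\eta(y^{-1};q^{-1},t^{-1})$ and $c_\eta:=d_\eta/d'_\eta$, so that
\begin{equation*}
\Omega(x,y^{-1};q,t)=\sum_\eta c_\eta\,E_\eta(x;q,t)\,\widetilde E_\eta(y).
\end{equation*}
Because the $E_\eta(x;q,t)$ form a basis of the ring of polynomials in $x$, after applying $T_i^{(x)}$ to the left side of (\ref{3.4.1}) and $T_i^{(y)}$ to the right side the coefficient of each fixed $E_\eta(x;q,t)$ must agree; the coefficient on the right is $c_\eta\,T_i^{(y)}\widetilde E_\eta(y)$, which is exactly the quantity we want.

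First I would expand $T_i^{(x)}\Omega(x,y^{-1};q,t)$ termwise using (\ref{TiEn}). For each composition $\mu$, $T_iE_\mu(x;q,t)$ is a linear combination of $E_\mu(x;q,t)$ and, when $\mu_i\neq\mu_{i+1}$, of $E_{s_i\mu}(x;q,t)$; hence the term $E_\eta(x;q,t)$ picks up contributions from precisely $\mu=\eta$ and $\mu=s_i\eta$. Collecting these and splitting into the three cases $\eta_i<\eta_{i+1}$, $\eta_i=\eta_{i+1}$, $\eta_i>\eta_{i+1}$ (observing that $s_i\eta$ falls into the opposite case), one finds that the coefficient of $E_\eta(x;q,t)$ equals $c_\eta(\cdots)\widetilde E_\eta(y)+c_{s_i\eta}(\cdots)\widetilde E_{s_i\eta}(y)$, where the scalars $(\cdots)$ come directly from (\ref{TiEn}). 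Dividing by $c_\eta$ and using $c_{s_i\eta}/c_\eta=d_{s_i\eta}d'_\eta/(d'_{s_i\eta}d_\eta)$ then yields the right-hand side of (\ref{TiEnInv1}), up to simplification of the $\delta$-factors.

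That simplification is the only genuine computation. The case $\eta_i=\eta_{i+1}$ is immediate since then $s_i\eta=\eta$ and $T_iE_\eta=tE_\eta$. For $\eta_i>\eta_{i+1}$ the off-diagonal contribution comes from the first line of (\ref{TiEn}) applied to $E_{s_i\eta}$ and carries only the bare factor $t$, giving the stated coefficient $t\,d_{s_i\eta}d'_\eta/(d'_{s_i\eta}d_\eta)$. For $\eta_i<\eta_{i+1}$ the off-diagonal contribution comes from the third line of (\ref{TiEn}) applied to $E_{s_i\eta}$ and involves $\delta_{i,s_i\eta}$; here I would use the relation $\delta_{i,s_i\eta}=\delta_{i,\eta}^{-1}$, which follows from (\ref{eigenvalue}) together with the effect of $s_i$ on the leg colengths (\ref{legcolength}) when $\eta_i\neq\eta_{i+1}$, and then the elementary identity
\begin{equation*}
\frac{(1-t\delta^{-1})(1-t^{-1}\delta^{-1})}{(1-\delta^{-1})^{2}}=\frac{(1-t\delta)(1-t^{-1}\delta)}{(1-\delta)^{2}},
\end{equation*}
obtained by clearing $\delta^{2}$ from the left-hand fraction, to bring everything into the form displayed in (\ref{TiEnInv1}). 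The main point requiring care is the bookkeeping: verifying that the coefficient of $E_\eta(x;q,t)$ in $T_i^{(x)}\Omega$ receives contributions from exactly $\mu=\eta$ and $\mu=s_i\eta$ and no others, and that the diagonal coefficient $\tfrac{t-1}{1-\delta_{i,\eta}^{-1}}$ is inherited unchanged because the parameters acted on the $x$-side are not inverted.
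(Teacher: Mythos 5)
Your proposal is correct and follows essentially the same route as the paper: apply $T_i$ to both sides of the Cauchy-kernel identity (\ref{3.4.1}), restrict to the $T_i$-invariant pair $\{\eta,s_i\eta\}$, expand the $x$-side with (\ref{TiEn}), and equate coefficients of $E_{\eta}(x;q,t)$ and $E_{s_i\eta}(x;q,t)$. The only additional content in your write-up is the explicit verification of $\delta_{i,s_i\eta}=\delta_{i,\eta}^{-1}$ and the resulting simplification of the off-diagonal coefficient, which the paper leaves implicit in the phrase ``equating coefficients of like terms.''
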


\begin{proof}
By (\ref{3.4.1}) we have 
\begin{eqnarray*}
T_{i}^{\left( x\right) }\left( \frac{d_{\eta }}{d_{\eta }^{\prime }}%
E_{\eta }( x;q,t) E_{\eta }( y^{-1};q^{-1},t^{-1}) +%
\frac{d_{s_{i}\eta }}{d_{s_{i}\eta }^{\prime }}E_{s_{i}\eta }(
x;q,t) E_{s_{i}\eta }( y^{-1};q^{-1},t^{-1}) \right)\hspace{1cm} \\
\hspace{1cm}=T_{i}^{\left( y\right) }\left( \frac{d_{\eta }}{d_{\eta }^{\prime }}%
E_{\eta }( x;q,t) E_{\eta }( y^{-1};q^{-1},t^{-1}) +%
\frac{d_{s_{i}\eta }}{d_{s_{i}\eta }^{\prime }}E_{s_{i}\eta }(
x;q,t) E_{s_{i}\eta }( y^{-1};q^{-1},t^{-1}) \right) .
\end{eqnarray*}%
Using $\left( \ref{TiEn}\right) $ and equating coefficients of like terms
gives $\left( \ref{TiEnInv1}\right) .$
\hfill $\square$ \end{proof}
The coefficients in the expansion of $S_{\eta ^{\ast }}^{\left( I,J\right)
}( z^{-1};q^{-1},t^{-1}) $ in terms of $\left\{ E_{\mu }(
z^{-1};q^{-1},t^{-1}) \right\} $ can be computed explicitly in terms
of the quantities $d_{\eta }$ and $d'_{\eta }$. The derivation makes use of the the fact that for  $\eta _{i}<\eta _{i+1}$ we have \cite{sahi}
\begin{equation}
\frac{d_{s_{i}\eta }}{d_{\eta } }=\frac{%
1-\delta _{i,\eta } }{t-\delta _{i,\eta } 
}\;\;\;\text{and}\;\;\frac{d_{s_{i}\eta }^{\prime }}{d_{\eta
}^{\prime }}=\frac{t^{-1}-\delta _{i,\eta }}{%
1-\delta _{i,\eta }}.
\label{d si}
\end{equation}%
\begin{proposition}
\label{proposition 2.1}Let $\omega \in W_{I\cup J}$ be decomposed as in (\ref{12'}). Let $\omega \eta ^{\ast }=\mu $ and $\omega _{I}\eta ^{\ast }=\mu
_{I}.$ The coefficients in 
\begin{equation}
S_{\eta ^{\ast }}^{\left( I,J\right) }( z^{-1};q^{-1},t^{-1}) =\sum_{\mu \in
W_{I\cup J}\left( \eta ^{\ast }\right) }\widehat{b}_{\eta ^{\ast }\mu
}E_{\mu }( z^{-1};q^{-1},t^{-1}) ,\hspace{0.5cm }\widehat{b}_{\eta ^{\ast }\eta ^{\ast
}}=1,  \label{149}
\end{equation}%
are specified by%
\begin{equation}
\widehat{b}_{\eta ^{\ast }\mu }=\left( -1\right) ^{l\left( \omega
_{J}\right) }t^{l\left( \omega _{I}\right) }\frac{d_{\eta ^{\ast }}^{\prime
}d_{\mu }}{d_{\mu _{I}}^{\prime }d_{\mu _{I}}}. \label{21a}
\end{equation}
Similarly, the coefficients in 
\begin{equation}
S_{\eta ^{\ast }}^{\left( I,J\right) }\left( z\right) =\sum_{\mu \in
W_{I\cup J}\left( \eta ^{\ast }\right) }\widehat{c}_{\eta ^{\ast }\mu
}E_{\mu }\left( z\right) ,\hspace{0.5cm }\widehat{c}_{\eta ^{\ast }\eta ^{\ast }}=1,
\label{2.15}
\end{equation}%
are specified by 
\begin{equation}
\widehat{c}_{\eta ^{\ast }\mu }=\left( -\frac{1}{t}\right) ^{l\left( \omega
_{J}\right) }\frac{d_{\eta ^{\ast }}^{\prime }d_{\mu }}{d_{\mu _{I}}^{\prime
}d_{\mu _{I}}}.  \label{coefficients}
\end{equation}
\end{proposition}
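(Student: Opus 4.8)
The plan is to reduce everything to repeated application of the $T_i$-action formulas and to exploit the factorised structure $\omega = \omega_I\omega_J$. First I would focus on establishing (\ref{21a}), the expansion for $S_{\eta^*}^{(I,J)}(z^{-1};q^{-1},t^{-1})$, since the second expansion (\ref{coefficients}) should follow by a parallel (indeed simpler, because no $d_\eta$-ratios appear) computation using (\ref{TiEn}) in place of Proposition \ref{TiEnInv}. By the definition of $O_{I,J}$ and (\ref{2.13}), $S_{\eta^*}^{(I,J)}(z^{-1};q^{-1},t^{-1})$ is proportional to $O_{I,J}E_{\eta}(z^{-1};q^{-1},t^{-1})$ for any $\eta$ with $\sigma\eta = \eta^*$ for some $\sigma \in W_{I\cup J}$; taking $\eta = \eta^*$ itself (so $\sigma = \mathrm{id}$) and using $O_{I,J} = \sum_{\omega} (-1/t)^{l(\omega_J)} T_\omega$, we get $O_{I,J}E_{\eta^*}(z^{-1};\dots) = \sum_{\omega \in W_{I\cup J}} (-1/t)^{l(\omega_J)} T_\omega E_{\eta^*}(z^{-1};\dots)$. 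The key claim to isolate is: for $\omega \in W_{I\cup J}$ with $\omega\eta^* = \mu$ and $\omega_I\eta^* = \mu_I$,
\begin{equation*}
T_\omega E_{\eta^*}(z^{-1};q^{-1},t^{-1}) = (-1)^{l(\omega_J)} t^{l(\omega)} \frac{d'_{\eta^*} d_\mu}{d'_{\mu_I} d_{\mu_I}} E_\mu(z^{-1};q^{-1},t^{-1}) + (\text{lower terms}),
\end{equation*}
where the "lower terms" are $E_\nu$ for $\nu \prec \mu$ in the relevant $W_{I\cup J}$-orbit; after normalising so that the coefficient of $E_{\eta^*}$ is $1$ (which forces the overall proportionality constant, using $d'_{\eta^*}d_{\eta^*}/(d'_{\eta^*}d_{\eta^*}) = 1$ for the $\omega = \mathrm{id}$ term), collecting all $\omega$ that send $\eta^*$ to a fixed $\mu$ gives (\ref{21a}) — and one must check that the lower-order contributions from different $\omega$ all cancel or get absorbed, which is automatic from the fact that the left side $S_{\eta^*}^{(I,J)}(z^{-1};\dots)$ is already the symmetrised object with a well-defined $E_\mu$-expansion.

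The heart of the argument is proving the displayed claim about $T_\omega E_{\eta^*}(z^{-1};\dots)$ by induction on $l(\omega)$. Write $\omega = s_{i}\omega'$ with $l(\omega) = l(\omega') + 1$; then $T_\omega = T_i T_{\omega'}$, and by induction $T_{\omega'}E_{\eta^*}(z^{-1};\dots)$ has leading term $(\text{const})\, E_{\mu'}(z^{-1};\dots)$ with $\mu' = \omega'\eta^*$. Since $\eta^*$ satisfies $\eta^*_i \ge \eta^*_{i+1}$ on $I$ and $\eta^*_j > \eta^*_{j+1}$ on $J$, and $\omega$ is a minimal-length coset representative pattern inside $W_{I\cup J}$, the relevant entries of $\mu'$ satisfy $\mu'_i < \mu'_{i+1}$ whenever we apply $s_i$, so we are always in the first case of (\ref{TiEnInv1}): $T_i E_{\mu'}(z^{-1};\dots) = \frac{t-1}{1-\delta_{i,\mu'}^{-1}}E_{\mu'}(z^{-1};\dots) + \frac{d_{s_i\mu'}d'_{\mu'}}{d'_{s_i\mu'}d_{\mu'}}\frac{(1-t\delta_{i,\mu'})(1-t^{-1}\delta_{i,\mu'})}{(1-\delta_{i,\mu'})^2}E_{s_i\mu'}(z^{-1};\dots)$. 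The first term is lower-order (same $\mu'$), so only the $E_{s_i\mu'} = E_\mu$ coefficient matters for the leading term, and it multiplies the inductive constant. The task is then a telescoping identity: show that the product over the reduced word of the factors $\frac{d_{s_i\mu'}d'_{\mu'}}{d'_{s_i\mu'}d_{\mu'}}\cdot\frac{(1-t\delta_{i,\mu'})(1-t^{-1}\delta_{i,\mu'})}{(1-\delta_{i,\mu'})^2}$ coming from $\omega_J$-steps, times the simpler factors $t\cdot\frac{d_{s_i\mu'}d'_{\mu'}}{d'_{s_i\mu'}d_{\mu'}}$ (third case, used when... actually only the first case arises here, giving factor $t$ times a $d$-ratio via the identity relating case-one and case-three coefficients) coming from $\omega_I$-steps, collapses to $(-1)^{l(\omega_J)} t^{l(\omega)} d'_{\eta^*}d_\mu/(d'_{\mu_I}d_{\mu_I})$. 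Here is where (\ref{d si}) is indispensable: for $\eta_i < \eta_{i+1}$ one has $d_{s_i\eta}/d_\eta = (1-\delta_{i,\eta})/(t-\delta_{i,\eta})$ and $d'_{s_i\eta}/d'_\eta = (t^{-1}-\delta_{i,\eta})/(1-\delta_{i,\eta})$, so the awkward $(1-t\delta)(1-t^{-1}\delta)/(1-\delta)^2$ factor combines with the $d$-ratios to give, after simplification, exactly the "multiply $d'$ by the ratio that rebuilds $d'_{s_i\mu'}$" behaviour — i.e. each $J$-step contributes a sign $-1$ (absorbing $(-1/t)^{l(\omega_J)}$ from $O_{I,J}$ later) and each $I$-step contributes a $t$, while the $d$-factors telescope so that the running product of $d_{\mu'}$ pieces collapses to $d_\mu$ and the $d'_{\mu'}$ pieces collapse to $d'_{\eta^*}$, except for the residual $d'_{\mu_I}d_{\mu_I}$ in the denominator coming from the "innermost" $\omega_I$ block having already moved $\eta^*$ to $\mu_I$ before the $\omega_J$ block acts.

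The main obstacle I anticipate is precisely this telescoping bookkeeping: tracking which intermediate composition $\mu'$ appears at each stage, verifying that one genuinely stays in case one of (\ref{TiEnInv1}) at every step (this uses that $W_{I\cup J}$ is generated by the $s_k$ with $k\in I\cup J$ and the decomposition $\omega = \omega_I\omega_J$ with $\omega_I$ acting "after" $\omega_J$ on $\eta^*$, plus the strict/weak inequalities defining $\eta^*$), and confirming that the $d'$-factors telescope to $d'_{\eta^*}$ rather than something involving an intermediate. Once the leading-coefficient claim is proved, passing to (\ref{2.15})–(\ref{coefficients}) is routine: one repeats the induction with (\ref{TiEn}) instead of (\ref{TiEnInv1}), where case one gives leading coefficient $t$ (not a $d$-ratio) and case three gives $\frac{(1-t\delta)(1-t^{-1}\delta)}{(1-\delta)^2}$ directly, so the $d$-ratios appear only through (\ref{d si}) in reverse, yielding $\widehat{c}_{\eta^*\mu} = (-1/t)^{l(\omega_J)} d'_{\eta^*}d_\mu/(d'_{\mu_I}d_{\mu_I})$ — note the $t^{l(\omega_I)}$ of (\ref{21a}) is absent because in the $z$ (rather than $z^{-1}$) action the $I$-steps already carry their $t$'s inside the normalisation, and the $(-1/t)^{l(\omega_J)}$ rather than $(-1)^{l(\omega_J)}t^{l(\omega_I)}$ discrepancy is exactly accounted for by comparing the two $T_i$-action formulas. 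I would close by remarking that $\widehat{b}$ and $\widehat{c}$ are related by the $(q,t)\mapsto(q^{-1},t^{-1})$, $z\mapsto z^{-1}$ symmetry together with the known behaviour of $d_\eta, d'_\eta$ under this involution, giving an independent consistency check.
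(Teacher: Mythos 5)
Your route is genuinely different from the paper's: you propose to compute $O_{I,J}E_{\eta^*}$ head-on by iterating the $T_i$-action along reduced words and tracking a ``leading'' coefficient. The paper never expands $O_{I,J}E_\eta$ at all. It takes the expansion (\ref{149}) as an ansatz, imposes the defining relations $T_iS=tS$ for $i\in I$ and $T_jS=-S$ for $j\in J$ on each pair $\widehat{b}_{\eta^*\mu}E_\mu+\widehat{b}_{\eta^*s_i\mu}E_{s_i\mu}$, and extracts a two-term recurrence $\widehat{b}_{\eta^*\mu}/\widehat{b}_{\eta^*s_i\mu}=(1-t\delta_{i,\mu})/(1-\delta_{i,\mu})=t\,d'_{s_i\mu}/d'_{\mu}$ on the $I$-side (and $-d_\mu/d_{s_j\mu}$ on the $J$-side) via (\ref{d si}); this telescopes along $\eta^*\to\mu_I\to\mu$ to give (\ref{21a}) with no path combinatorics whatsoever.

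That combinatorics is where your argument has a genuine gap. The ``lower terms'' you set aside do contribute to the coefficient of every $E_\mu$, including $E_{\eta^*}$ itself: each application of $T_i$ produces a diagonal term $\frac{t-1}{1-\delta_{i,\mu'}^{-1}}E_{\mu'}$ alongside the off-diagonal one, so the coefficient of $E_\mu$ in $\sum_\omega(-1/t)^{l(\omega_J)}T_\omega E_{\eta^*}$ is a sum over all diagonal/off-diagonal choices in all words $T_\omega$ whose path ends at $\mu$, not just the single all-off-diagonal path of the one $\omega$ with $\omega\eta^*=\mu$. In particular the coefficient of $E_{\eta^*}$ is not $1$ but the constant $a_{\eta^*}^{(I,J)}$ of (\ref{2.13}), which Proposition \ref{proposition 2.2} shows to be a nontrivial ratio of $d$'s; so your normalisation step fails, and the claim that the extra contributions ``cancel or get absorbed automatically'' is false. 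Concretely, for $n=2$, $I=\emptyset$, $J=\{1\}$, $\eta^*=(1,0)$, the coefficient of $E_{(1,0)}(z^{-1};q^{-1},t^{-1})$ in $(1-t^{-1}T_1)E_{(1,0)}(z^{-1};q^{-1},t^{-1})$ is $\frac{t^{-1}-\delta_{1,(1,0)}^{-1}}{1-\delta_{1,(1,0)}^{-1}}=d'_{(1,0)}/d'_{(0,1)}\neq 1$, and dividing the off-diagonal contribution $-d_{(0,1)}d'_{(1,0)}/(d'_{(0,1)}d_{(1,0)})$ by it is precisely what produces the correct $\widehat{b}_{(1,0)(0,1)}=-d_{(0,1)}/d_{(1,0)}$; your scheme omits this division. (You are also uncertain, with reason, about which case of (\ref{TiEnInv1}) applies: moving down the orbit from $\eta^*$ one applies $T_i$ to compositions with $\mu'_i>\mu'_{i+1}$, i.e.\ the third case, not the first.) The full path summation can be carried out only at the extremal composition $\eta^{(-,-)}$, where a unique path reaches the bottom of the orbit --- this is exactly what the paper exploits in proving Proposition \ref{proposition 2.2} --- but for intermediate $\mu$ you should abandon the direct expansion and instead derive the coefficient ratios from (\ref{symmetric oij})--(\ref{asymmetric oij}) as the paper does.
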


\begin{proof}
We write%
\begin{eqnarray*}
\sum_{\mu \in W_{I\cup J}\left( \eta ^{\ast }\right) }\widehat{b}_{\eta
^{\ast }\mu }E_{\mu }( z^{-1};q^{-1},t^{-1}) \hspace{8cm} \\ =\sum_{\substack{ \mu \in W_{I\cup
J}\left( \eta ^{\ast }\right)  \\ \mu _{i}\leq \mu _{i+1}}}\chi
_{i,i+1}\left( \widehat{b}_{\eta ^{\ast }\mu }E_{\mu }( z^{-1};q^{-1},t^{-1}) +%
\widehat{b}_{\eta ^{\ast }s_{i}\mu }E_{s_{i}\mu }( z^{-1};q^{-1},t^{-1}) \right)
\end{eqnarray*}%
where $\chi _{i,i+1}=1/2$ if $\mu =s_{i}\mu $ and $1$ otherwise. For $i\in I$
we require 
\begin{equation}
T_{i}S_{\eta ^{\ast }}^{\left( I,J\right) }( z^{-1};q^{-1},t^{-1}) =tS_{\eta
^{\ast }}^{\left( I,J\right) }( z^{-1};q^{-1},t^{-1}) .  \label{symmetric prop}
\end{equation}%
If $\mu _{i}=\mu _{i+1}$ $\left( \ref{symmetric prop}\right) $ holds due to
the relation in $\left( \ref{TiEnInv1}\right) .$ Hence we consider the case
where $\mu _{i}<\mu _{i+1}.$ Expanding the left hand side of $\left( \ref%
{symmetric prop}\right) $ using $\left( \ref{TiEnInv1}\right) $ gives
simultaneous equations and solving these show%
\begin{equation}
\frac{\widehat{b}_{\eta ^{\ast }\mu }}{\widehat{b}_{\eta ^{\ast }s_{i}\mu }}=%
\frac{1-t\delta _{i,\mu }}{1-\delta _{i,\mu }}\hspace{0.5cm}\text{ for all }i\in I.
\label{first ratio}
\end{equation}%
Since $\mu _{i}<\mu _{i+1}$ $\left( \ref{d si}\right) $ can be used to
rewrite $\left( \ref{first ratio}\right) $ as%
\begin{equation}
\frac{\widehat{b}_{\eta ^{\ast }\mu }}{\widehat{b}_{\eta ^{\ast }s_{i}\mu }}%
=t\frac{d_{s_{i}\mu }^{\prime }}{d_{\mu }^{\prime }}. \label{29a}
\end{equation}%
By noting $\eta^*=\omega^{-1}_I\mu_I=s_{i_1}\ldots_{i_{l(\omega^{-1}_I)}}\mu_I$ where each $s_i$ interchanges increasing components we can apply (\ref{29a}) repeatedly to obtain
\begin{equation}
\frac{\widehat{b}_{\eta ^{\ast }\mu _{I}}}{\widehat{b}_{\eta ^{\ast }\eta
^{\ast }}}=\widehat{b}_{\eta ^{\ast }\mu _{I}}=t^{l\left( \omega _{I}\right) }\frac{d_{\eta ^{\ast }}^{\prime }}{%
d_{\mu _{I}}^{\prime }}, \label{29b}
\end{equation}%
where the first equality follows from the normalisation $\widehat{b}_{\eta ^{\ast }\eta ^{\ast }}=1$. 
\bigskip \\
To complete the derivation we require a formula for the ratio $\widehat{b}_{\eta ^{\ast }\mu }/\widehat{b}_{\eta ^{\ast }\mu _{I}}$. Since for all $j\in J$ we have
\begin{equation*}
T_{j}S_{\eta ^{\ast }}^{\left( I,J\right) }( z^{-1};q^{-1},t^{-1}) =-S_{\eta ^{\ast
}}^{\left( I,J\right) }( z^{-1};q^{-1},t^{-1}),
\end{equation*}%
applying the above methods gives $\widehat{b}_{\eta ^{\ast }\mu }/\widehat{b}_{\eta ^{\ast }s_{j}\mu }=-d_{\mu }/d_{s_{j}\mu },$
and consequently
\begin{equation}
\frac{\widehat{b}_{\eta ^{\ast }\mu }}{\widehat{b}_{\eta ^{\ast }\mu _{I}}}%
=\left( -1\right) ^{l\left( \omega _{J}\right) }\frac{d_{\mu }}{d_{\mu _{I}}}%
. \label{121}
\end{equation}%
Combining (\ref{121}) with $\left( \ref{29b}\right) $ we obtain $\left( \ref%
{21a}\right) .$
\bigskip \\
The derivation of (\ref{coefficients}) is as above, only replacing $\left( \ref{TiEnInv1}\right) $ with $%
\left( \ref{TiEn}\right)$.
\hfill $\square$ \end{proof}
We now use Proposition \ref{proposition 2.1} to determine $a_{\eta
}^{\left( I,J\right) }.$ To present the result requires some notation. Write 
$\eta ^{\left( \epsilon _{I},\epsilon _{J}\right) },$ where $\epsilon
_{I},\epsilon _{J}\in \left\{ +,0,-\right\} ,$ to denote the element of $%
W_{I\cup J}\left( \eta \right) $ with the properties that $\eta ^{\left(
+,\cdot \right) }$ $\left( \eta ^{\left( \cdot ,+\right) }\right) $ has $%
\eta _{i}^{\left( +,\cdot \right) }\geq \eta _{i+1}^{\left( +,\cdot \right)
} $ for all $i\in I$ ($\eta _{j}^{\left( +,\cdot \right) }>\eta
_{j+1}^{\left( +,\cdot \right) }$ for all $j\in J$), $\eta ^{\left( -,\cdot
\right) }$ $\left( \eta ^{\left( \cdot ,-\right) }\right) $ has $\eta
_{i}^{\left( -,\cdot \right) }\leq \eta _{i+1}^{\left( -,\cdot \right) }$
for all $i\in I$ ($\eta _{j}^{\left( -,\cdot \right) }<\eta _{j+1}^{\left(
-,\cdot \right) }$ for all $j\in J$), and $\eta ^{\left( 0,\cdot \right) }$
has $\eta _{i}^{\left( 0,\cdot \right) }=\eta _{i+1}^{\left( 0,\cdot \right)
}$ for all $i\in I.$ For example for $\mu=\omega_I \omega_J \eta^*$ we have $\omega_I \eta^*=\mu^{(0,+)}$ and $\omega_J \eta^*=\mu^{(+,0)}.$ Also introduce
\begin{equation*}
M_{I,\eta }:=\sum_{\substack{ \sigma ^{\prime }\in W_{I}  \\ \sigma ^{\prime
}\left( \eta \right) =\eta ^{\left( +,0\right) }}}t^{l\left( \sigma ^{\prime
}\right) }.
\end{equation*}

\begin{proposition}
\label{proposition 2.2} The proportionality constant $a_{\eta }^{\left( I,J\right) }$ in $%
\left( \ref{2.13}\right) $ is specified by
\begin{equation*}
a_{\eta }^{\left( I,J\right) }=\left( -1\right) ^{l\left( \omega _{J}\right)
}M_{I,\eta }\frac{d_{\eta }^{\prime }d_{\eta ^{\left( -,+\right) }}^{\prime
}d_{\eta ^{\left( -,+\right) }}}{d_{\eta ^{\left( 0,+\right) }}^{\prime
}d_{\eta ^{\left( 0,+\right) }}d_{\eta ^{\left( -,-\right) }}^{\prime }}
\label{2.21}
\end{equation*}%
where $\omega _{J}$ is such that $\omega _{J}\eta^{(+,+)} =\eta ^{\left( +,0\right)
}.$
\end{proposition}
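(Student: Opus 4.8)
The plan is to extract $a_\eta^{(I,J)}$ by comparing the two expansions we already control: the defining relation $O_{I,J}E_\eta = a_\eta^{(I,J)} S_{\eta^*}^{(I,J)}$ and the monomial-normalised expansion $S_{\eta^*}^{(I,J)} = \sum_{\mu} \widehat{c}_{\eta^*\mu} E_\mu$ from Proposition \ref{proposition 2.1}. Since all polynomials in sight are triangular with the $E_\nu$'s linearly independent, it suffices to compute the coefficient of a single conveniently chosen $E_\mu$ on both sides and take the quotient. First I would apply $O_{I,J} = \sum_{\omega \in W_{I\cup J}} (-1/t)^{l(\omega_J)} T_\omega$ to $E_\eta$, expanding each $T_\omega E_\eta$ via repeated use of (\ref{TiEn}); the key observation is that each $T_\omega$ produces a triangular combination of $E_\nu$ for $\nu$ in the orbit, with a leading term. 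The natural $E_\mu$ to track is $E_{\eta^{(-,-)}}$, the ``lowest'' element of the orbit (all components increasing on $I$ and strictly increasing on $J$): applying $T_i$ with $\eta_i < \eta_{i+1}$ via the first case of (\ref{TiEn}) contributes a clean factor $t$ to the $E_{s_i\eta}$ term with no feedback from higher $E_\nu$, so iterating drives everything down to $E_{\eta^{(-,-)}}$ with an explicitly computable coefficient, while the $O_{I,J}$ sum collapses because only the longest elements $\omega_I, \omega_J$ reaching $\eta^{(-,-)}$ survive at that extreme.

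Concretely, on the $O_{I,J}E_\eta$ side the coefficient of $E_{\eta^{(-,-)}}$ should come out as a product of three ratios of the $d,d'$ quantities — one ratio from antisymmetrising $\eta^{(\cdot,+)}$-type data down to $\eta^{(\cdot,-)}$ using the second line of (\ref{d si}), one from the symmetrisation bookkeeping that produces the $M_{I,\eta}$ sum (here the sum over $\sigma' \in W_I$ with $\sigma'(\eta) = \eta^{(+,0)}$ appears because of the stabiliser structure when $\eta$ has equal components on $I$), and one from the overall sign $(-1)^{l(\omega_J)}$. On the $S_{\eta^*}^{(I,J)}$ side, the coefficient $\widehat{c}_{\eta^*\mu}$ with $\mu = \eta^{(-,-)}$ is read directly from (\ref{coefficients}): with $\omega$ chosen so $\omega\eta^* = \eta^{(-,-)}$ and $\omega_I\eta^* = \mu_I = \eta^{(-,0)}$, it equals $(-1/t)^{l(\omega_J)} d_{\eta^*}'d_{\eta^{(-,-)}}/(d_{\eta^{(-,0)}}' d_{\eta^{(-,0)}})$. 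Dividing, the $(-1/t)^{l(\omega_J)}$ factors must be reconciled using $l(\omega_J)$ for the relevant $\omega_J$ (the one with $\omega_J\eta^{(+,+)} = \eta^{(+,0)}$, as stated), and the $d$-quantities must be rewritten using $d_{\eta^*} = d_{\eta^{(+,+)}}$ and the various ratio identities (\ref{d si}) applied along geodesics between the orbit representatives $\eta^{(\pm,\pm)}$, $\eta^{(0,+)}$, collapsing the telescoping products into exactly the stated expression.

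The main obstacle I anticipate is the careful bookkeeping of which $\omega \in W_{I\cup J}$ actually contribute to the coefficient of $E_{\eta^{(-,-)}}$ and with what sign and $t$-power — in particular disentangling the interaction between the $I$-part and $J$-part of $\omega$, and correctly identifying $M_{I,\eta}$ as the surviving $t$-weighted count of permutations in $W_I$ fixing $\eta$ up to the relevant rearrangement (this is where the $t=1$ intuition of ``orbit size'' gets $t$-deformed). A secondary subtlety is making sure the monomial normalisation is handled consistently: the coefficient of $z^{\eta^*}$ in $S_{\eta^*}^{(I,J)}$ is $1$ by definition, but the coefficient of $E_{\eta^{(-,-)}}$ inside $O_{I,J}E_\eta$ is not immediately a ratio of leading monomial coefficients, so one must track the triangularity (\ref{forma}) carefully to ensure no lower $E_\nu$ contaminates the comparison — this is automatic once one works with the $E$-basis expansion rather than monomials, which is why I would phrase the entire argument in the $E$-basis. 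Finally, the substitution $z \mapsto z^{-1}$, $q,t \mapsto q^{-1},t^{-1}$ version (using Proposition \ref{TiEnInv} and the $\widehat{b}$'s) gives an independent check that the $d,d'$ combinations are correct, and I would use it to verify the final formula rather than rederive it.
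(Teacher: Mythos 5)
Your high-level strategy (pin down $a_{\eta}^{(I,J)}$ by comparing the coefficient of a single well-chosen $E_{\mu}$ in $O_{I,J}E_{\eta}$ against $a_{\eta}^{(I,J)}\widehat{c}_{\eta^{\ast}\mu}$) is sound in principle, but the mechanism you rely on to make the extraction tractable is reversed. In the ordering $\prec$ used in (\ref{forma}), the composition $s_{i}\eta$ with the \emph{decreasing} pair is the larger one, so the maximal element of the orbit is $\eta^{(+,+)}=\eta^{\ast}$ and $\eta^{(-,-)}$ is the \emph{minimal} one. The first case of (\ref{TiEn}), $\eta_{i}<\eta_{i+1}$, with its clean off-diagonal factor $t$, therefore moves you \emph{up} towards $E_{\eta^{\ast}}$; to land on $E_{\eta^{(-,-)}}$ you must pass through the third case, whose off-diagonal coefficient is $(1-t\delta_{i,\eta})(1-t^{-1}\delta_{i,\eta})/(1-\delta_{i,\eta})^{2}$, not $t$. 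Moreover the diagonal terms $\tfrac{t-1}{1-\delta_{i,\eta}^{-1}}E_{\eta}$ mean that every $\omega\in W_{I\cup J}$ contributes to the coefficient of every $E_{\mu}$ in the orbit through many wandering paths, so there is no "collapse to the longest elements" and the bookkeeping you defer is exactly the hard part. As written, the proposal does not contain a workable route to the coefficient of $E_{\eta^{(-,-)}}$ in $O_{I,J}E_{\eta}$ for general $\eta$.

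The paper closes precisely this gap with an ingredient absent from your plan: the Cauchy kernel restricted to the orbit, $G(x,y)=\sum_{\eta}\tfrac{d_{\eta}}{d_{\eta}'}E_{\eta}(x;q,t)E_{\eta}(y^{-1};q^{-1},t^{-1})$, together with $O_{I,J}^{(x)}G=O_{I,J}^{(y)}G$ and separation of variables. This yields $a_{\eta^{\ast}}\tfrac{d_{\eta}}{d_{\eta}'}a_{\eta}^{(I,J)}=\widehat{b}_{\eta^{\ast}\eta}$ with $\widehat{b}$ as in (\ref{21a}) --- note it is the $\widehat{b}$'s in the inverted variables, not the $\widehat{c}$'s, that enter, which is why Proposition \ref{proposition 2.1} computes both; your proposal relegates the inverted-variable expansion to an optional "check". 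With that relation in hand, one only needs a direct computation of $a_{\eta}^{(I,J)}$ at the single extreme element $\eta=\eta^{(-,-)}$, where every move needed to reach the normalising coefficient at $E_{\eta^{\ast}}$ is a clean first-case up-move and one obtains $(-1)^{l(\omega_{J'})}M_{I,\eta}$; substituting back determines $a_{\eta^{\ast}}$ and hence the general formula. If you want to avoid the kernel, you must either genuinely control the third-case coefficients and the path combinatorics for arbitrary $\eta$ and all $\omega\in W_{I\cup J}$, or find another way to propagate the answer from $\eta^{(-,-)}$ to the rest of the orbit; neither is supplied here.
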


\begin{proof}
Let $G\left( x,y\right) $ be defined by%
\begin{equation}
G\left( x,y\right) =\sum_{\eta \in W_{I\cup J}\left( \eta ^{\ast }\right) }%
\frac{d_{\eta}}{d_{\eta }^{\prime }}E_{\eta}\left( x;q,t\right) E_{\eta
}\left( y^{-1};q^{-1},t^{-1}\right).  \label{2.13b}
\end{equation}%
It follows from $\left( \ref{TiEn}\right) $ and $\left( \ref{TiEnInv1}%
\right) $ that $T_{i}^{\left( x\right) }G\left( x,y\right) =T_{i}^{\left( y\right) }G\left(
x,y\right) \text{ for }i\in I\cup J,$ and hence%
\begin{equation}
O_{I,J}^{\left( x\right) }G\left( x,y\right) =O_{I,J}^{\left( y\right)
}G\left( x,y\right) .  \label{2.23J}
\end{equation}%
By $\left( \ref{symmetric oij}\right) $ and $\left( \ref{asymmetric oij}%
\right) $ we have $O_{I,J}^{\left( y\right) }E_{\eta }\left(
y^{-1};q^{-1},t^{-1}\right) =b_{\eta }^{\left( I,J\right) }S_{\eta ^{\ast
}}^{\left( I,J\right) }\left( y^{-1};q^{-1},t^{-1}\right)$ for some $b_{\eta }^{\left(
I,J\right) }\in 
\mathbb{Q}
\left( q,t\right) .$ Hence substituting $\left( \ref{2.13b}\right) $ into $%
\left( \ref{2.23J}\right) $ and recalling $\left( \ref{2.13}\right) $ shows%
\begin{equation*}
S_{\eta ^{\ast }}^{\left( I,J\right) }\left( x\right) \sum_{\eta \in W_{I\cup
J}\left( \eta ^{\ast }\right) }\frac{d_{\eta }}{d^{\prime }_{\eta }}a_{\eta
}^{\left( I,J\right) }E_{\eta }\left( y^{-1}\right) =S_{\eta ^{\ast
}}^{\left( I,J\right) }\left( y^{-1}\right) \sum_{\eta \in W_{I\cup J}\left(
\eta ^{\ast }\right) }\frac{d_{\eta }}{d^{\prime }_{\eta }}b_{\eta }^{\left(
I,J\right) }E_{\eta }\left( x\right) .
\end{equation*}%
Using seperation of variables it follows that 
\begin{equation}
S_{\eta ^{\ast }}^{\left( I,J\right) }\left( y^{-1}\right) =a_{\eta ^{\ast
}}\sum_{\eta \in W_{I\cup J}\left( \eta ^{\ast }\right) }\frac{d_{\eta
}}{d^{\prime }_{\eta }}a_{\eta}^{\left( I,J\right) }E_{\eta }\left(
y^{-1}\right)  \label{2.24}
\end{equation}%
for some constant $a_{\eta ^{\ast }}.$ Equating coefficients for $\eta=\omega_I \omega_J \eta^*$ in $\left( \ref%
{2.24}\right) $ and $\left( \ref{149}\right) $ shows%
\begin{equation}
a_{\eta ^{\ast }}\frac{d_{\eta }}{d^{\prime }_{\eta }}a_{\eta }^{\left(
I,J\right) }=\left( -1\right) ^{l\left( \omega _{J}\right) }t^{l\left(
\omega _{I}\right) }\frac{d_{\eta ^{\ast }}^{\prime }d_{\eta }}{d_{\eta
^{(0,+)}}^{\prime }d_{\eta^{(0,+)}}}.  \label{2.25}
\end{equation}%
The identity $\left( \ref{2.25}%
\right) $ must hold for all $\eta \in W_{I\cup J\left( \eta ^{\ast }\right) }$%
, and in particular for $\eta=\eta^{{(-,-)}}$. For such a composition we can use (\ref{TiEn}) to show
\begin{equation*}
O_{I,J}E_{\eta ^{\left( -,-\right) }}\left( x\right) =\left( -1\right)
^{l\left( \omega _{J^{\prime }}\right) }M_{I,\eta }S_{\eta ^{\ast }}^{\left(
I,J\right) }\left( x\right),
\end{equation*}%
where $\omega _{J^{\prime}}^{-1}\eta ^{\left( -,-\right) }=\eta ^{\left(
-,+\right) }.$ Consequently
\begin{equation}
a_{\eta ^{\left( -,-\right) }}^{(I,J)}=\left( -1\right) ^{l\left( \omega
_{J^{\prime }}\right) }M_{I,\eta }.  \label{this}
\end{equation}%
Substituting $\left( \ref{this}\right) $ into $\left( \ref{2.25}\right) $
with $\eta =\eta ^{\left( -,-\right) }$ implies%
\begin{equation}
a_{\eta ^{\ast }}=\frac{t^{l\left( \omega _{I}\right) }}{M_{I,\eta }}\frac{%
d_{\eta ^{\ast }}^{\prime }d_{\eta ^{\left( -,-\right) }}^{\prime }}{d_{\eta
^{\left( -,+\right) }}^{\prime }d_{\eta ^{\left( -,+\right) }}}.
\label{2.26}
\end{equation}%
Substituting $\left( \ref{2.26}\right) $ in $\left( \ref{2.25}\right) $ gives the desired result.
\hfill $\square$ \end{proof}

\begin{corollary}
We have the evaluation formula%
\begin{equation}
S_{\eta ^{\ast }}^{(I,\emptyset )}( t^{\underline{\delta }}) =\frac{n_{I}}{a_{\eta ^{\ast }}^{\left( I,\emptyset \right) }}E_{\eta ^{\ast }}(
t^{\underline{\delta }}) =\frac{n_{I}}{M_{I,\eta ^{\ast }}}\frac{%
t^{l\left( \eta \right) }e_{\eta ^{\ast }}}{d_{\eta ^{\ast \left( -,0\right)
}}},  \label{only symmetrised}
\end{equation}%
where $n_{I}:=\Sigma _{\sigma \in W_{I}}t^{l\left( \sigma \right)
}=\Pi _{s}[ | \widetilde{I}_{s}|] _{t}!.$
\end{corollary}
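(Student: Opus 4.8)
The plan is to read the evaluation off from \eqref{2.13} by specialising the variables at $z=t^{\underline{\delta}}$, exploiting that this point is chosen precisely so that $z_{i+1}/z_i=t$ for every $i$. At such a point the rational prefactor $(tz_i-z_{i+1})/(z_i-z_{i+1})$ occurring in $T_i$ vanishes identically, so $(T_if)(t^{\underline{\delta}})=t\,f(t^{\underline{\delta}})$ for every polynomial $f$ and every $i\in\{1,\dots,n-1\}$. Iterating along a word for $\omega$ gives $(T_\omega f)(t^{\underline{\delta}})=t^{l(\omega)}f(t^{\underline{\delta}})$, and since for $J=\emptyset$ the sign factor in $O_{I,\emptyset}$ is trivial and $O_{I,\emptyset}=\sum_{\omega\in W_I}T_\omega$, summing yields
\begin{equation*}
(O_{I,\emptyset}f)(t^{\underline{\delta}})=\left(\sum_{\omega\in W_I}t^{l(\omega)}\right)f(t^{\underline{\delta}})=n_I\,f(t^{\underline{\delta}}),
\end{equation*}
the middle expression being the definition of $n_I$; the closed form $n_I=\prod_s[\,|\widetilde{I}_s|\,]_t!$ is the Poincar\'e polynomial of the parabolic subgroup $W_I\cong\prod_sS_{|\widetilde{I}_s|}$.

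For the first equality I would put $\eta=\eta^{\ast}$ in \eqref{2.13} (legitimate, since $\eta^{\ast}$ is $I$-dominant), evaluate both sides at $t^{\underline{\delta}}$, and apply the identity above with $f=E_{\eta^{\ast}}$. This gives $a_{\eta^{\ast}}^{(I,\emptyset)}\,S_{\eta^{\ast}}^{(I,\emptyset)}(t^{\underline{\delta}})=n_I\,E_{\eta^{\ast}}(t^{\underline{\delta}})$, which is the middle expression of the claim after dividing by $a_{\eta^{\ast}}^{(I,\emptyset)}$.

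For the second equality the most economical route is to use \eqref{2.13} again, now with $\eta$ equal to the $I$-antidominant rearrangement $\eta^{\ast(-,0)}$ of $\eta^{\ast}$ (which lies in the same $W_I$-orbit). The computation carried out in the proof of Proposition \ref{proposition 2.2}, specialised to $J=\emptyset$ in \eqref{this} where $\omega_{J'}$ is then trivial, gives $a_{\eta^{\ast(-,0)}}^{(I,\emptyset)}=M_{I,\eta^{\ast}}$, hence $O_{I,\emptyset}E_{\eta^{\ast(-,0)}}=M_{I,\eta^{\ast}}\,S_{\eta^{\ast}}^{(I,\emptyset)}$; evaluating at $t^{\underline{\delta}}$ as above gives $M_{I,\eta^{\ast}}\,S_{\eta^{\ast}}^{(I,\emptyset)}(t^{\underline{\delta}})=n_I\,E_{\eta^{\ast(-,0)}}(t^{\underline{\delta}})$. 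It then remains to insert the classical principal-specialisation formula for the nonsymmetric Macdonald polynomial, $E_{\nu}(t^{\underline{\delta}})=t^{\bullet}e_{\nu}/d_{\nu}$, with $t^{\bullet}$ the explicit power written $t^{l(\eta)}$ in the statement, taken at $\nu=\eta^{\ast(-,0)}$, together with the observation that $e_{\nu}$ depends only on the multiset $\{(\nu_i,l'_{\nu}(i))\}$ and is therefore invariant under permuting the parts, so that $e_{\eta^{\ast(-,0)}}=e_{\eta^{\ast}}$. This reproduces the right-hand side exactly. One could instead equate the middle and right-hand sides head on, substituting the formula for $a_{\eta^{\ast}}^{(I,\emptyset)}$ from Proposition \ref{proposition 2.2} and the same specialisation of $E_{\eta^{\ast}}(t^{\underline{\delta}})$, and then simplifying the resulting ratio of $d$- and $e$-factors with the help of \eqref{d si}.

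The bulk of the work is routine; the genuinely delicate points are conventional. One must verify that $t^{\underline{\delta}}$, and not $t^{\delta}$, is the point at which the $T_i$-prefactors vanish, and must track the power of $t$ in the specialisation of $E_{\eta^{\ast(-,0)}}$ carefully enough that it emerges as the stated $t^{l(\eta)}$. The permutation-invariance of $e_{\nu}$ is a short but slightly fiddly check through adjacent transpositions. Following the alternative direct route instead, the principal obstacle would be the algebraic bookkeeping needed to reconcile the relatively intricate expression for $a_{\eta^{\ast}}^{(I,\emptyset)}$ in Proposition \ref{proposition 2.2} with the compact closed form on the right.
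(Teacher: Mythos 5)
Your argument for the first equality is exactly the paper's: $T_if(t^{\underline{\delta}})=tf(t^{\underline{\delta}})$, hence $O_{I,\emptyset}f(t^{\underline{\delta}})=n_If(t^{\underline{\delta}})$, applied to (\ref{2.13}) with $\eta=\eta^{\ast}$. For the second equality the paper takes what you relegate to an aside, the ``direct route'': with $J=\emptyset$ and $\eta=\eta^{\ast}$ one has $\eta^{(0,+)}=\eta^{\ast}$ and $\eta^{(-,+)}=\eta^{(-,-)}=\eta^{\ast(-,0)}$, so the formula of Proposition \ref{proposition 2.2} collapses immediately to $a_{\eta^{\ast}}^{(I,\emptyset)}=M_{I,\eta^{\ast}}\,d_{\eta^{\ast(-,0)}}/d_{\eta^{\ast}}$ (all the primed $d$'s cancel), and substituting $E_{\eta^{\ast}}(t^{\underline{\delta}})=t^{l(\eta^{\ast})}e_{\eta^{\ast}}/d_{\eta^{\ast}}$ finishes in one line --- the ``algebraic bookkeeping'' you anticipate as the principal obstacle there is in fact trivial. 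Your preferred route, via (\ref{2.13}) at $\eta=\eta^{\ast(-,0)}$ and the intermediate identity (\ref{this}), is workable but hides two compensating subtleties that you only gesture at. First, while $e_{\nu}$ is invariant under permuting parts, the exponent $l(\nu)$ in the principal specialisation is not: evaluating (\ref{TiEn}) at $z=t^{\underline{\delta}}$ and using (\ref{d si}) gives $E_{s_i\eta}(t^{\underline{\delta}})=t^{-1}(d_{\eta}/d_{s_i\eta})E_{\eta}(t^{\underline{\delta}})$ for $\eta_i<\eta_{i+1}$, so $l(\eta^{\ast(-,0)})$ differs from $l(\eta^{\ast})$ by the number of strict inversions within the $I$-blocks. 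Second, the constant appearing in (\ref{this}) is the $M$-sum attached to the antidominant rearrangement (a sum over the long coset in $W_I$), which is not the same polynomial as $M_{I,\eta^{\ast}}$ (a sum over the stabiliser) when the relevant entries are distinct. These two discrepancies cancel, but establishing that cancellation is exactly the extra work your route incurs and the paper's choice $\eta=\eta^{\ast}$ avoids. So: the proposal is correct in substance, but you should either carry out that reconciliation explicitly or simply adopt the direct substitution as the main line.
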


\begin{proof}
Using $T_{i}f\left( t^{\underline{\delta }}\right) =tf\left( t^{\underline{%
\delta }}\right) $ the first equality of $\left( \ref{only symmetrised}%
\right) $ can be derived immediately from $\left( \ref{2.13}\right) .$
With $J=\emptyset $ and $\eta =\eta ^{\ast }$ Proposition \ref{proposition
2.2}, gives 
\begin{equation*}
a_{\eta ^{\ast }}^{\left( I,\emptyset \right) }=M_{I,\eta ^{\ast }}\frac{%
d_{\eta ^{\ast \left( -,0\right) }}}{d_{\eta ^{\ast }}}.
\end{equation*}%
Substituting this and the well known result (see e.g. \cite{macdonald})%
\begin{equation*}
E_{\eta }( t^{\underline{\delta }};q,t) =t^{l\left( \eta \right) }%
\frac{e_{\eta } }{d_{\eta } }
\end{equation*}
gives the final equality.
\hfill $\square$ \end{proof}
\bigskip 
We now move on to our first related result, deducing the form of Macdonald polynomials with prescribed symmetry in specific cases.

\section{Special Forms of the Prescribed Symmetry Polynomials}\label{special forms}
\subsection{The main result}

We begin by introducing some notation to simplify the labeling of the $S_{\eta^*}^{(I,J)}$ of interest. With $N_p:=\{n_1,n_2,\ldots,n_p\}$, let
\begin{eqnarray*}
(\kappa_{n_0},\delta_{N_p})&:=&(\kappa_1,\ldots, \kappa_{n_0},n_1-1,n_1-2,\ldots,1,0,\ldots,n_p-1,\ldots,1,0) \\
I^{n_0}&:=&\{1,\ldots, n_{0}-1\}
\end{eqnarray*}
and
\begin{equation}
J^{n_0,N_p}:=\cup
_{i=1}^{p}\left\{ \Sigma _{j=1}^{i}n_{j-1}+1,...,\Sigma_{j=1}^{i+1}n_{j-1}-1\right\} \label{JnoNp}
\end{equation}
For example with $\kappa_3=(3,3,2)$ and $N_p=\{4,2\}$, $(\kappa_3,\delta_{\{4,2\}})=\left(3,3,2,3,2,1,0,1,0)\right) $, $I^3=\{1,2\}$ and $J^{3,\{4,2\}}=\{4,5,6,8\}$.
\bigskip \\
Related to the set $J^{n_0,N_p}=J$ are the generalised Vandermonde products $\Delta ^{n_0,N_p}\left( z\right)$ and $\Delta _{t}^{n_0,N_p}\left( z\right)$, defined by 
\begin{equation*}
\Delta^{{n_0,N_p}}\left( z\right) :=\prod_{\beta =1}^{p}\prod
_{\substack{ \min \left( \widetilde{J}_{\beta }\right) \leq i \\<j \leq \max \left( 
\widetilde{J}_{\beta }\right) }}\left( z_{i}-z_{j}\right)
\label{crazy delta1}
\end{equation*}
and
\begin{equation*}
\Delta^{{n_0,N_p}}_t\left( z\right) :=\prod_{\beta =1}^{p}\prod
_{\substack{ \min \left( \widetilde{J}_{\beta }\right) \leq i \\<j \leq \max \left( 
\widetilde{J}_{\beta }\right) }}\left( z_{i}-t^{-1}z_{j}\right).
\label{crazy delta}
\end{equation*}
In the theory of Jack polynomials with prescribed symmetry, these being denoted by $S_{\eta^*}^{(I,J)}(z;\alpha)$, using the properties of the eigenoperators it was found that \cite{peter4} with $\eta^*=(\kappa_{n_0},\delta_{N_p})$, $(I,J)=(I_{n_0},J_{n_0,N_p})$ and $\kappa$ a partition such that $\kappa_1<\min(n_1,\ldots,n_p)$ 
\begin{equation}
S^{(I,J)}_{\eta^*}(z;\alpha)=\Delta^{n_0,N_p}\left( z\right)J_\kappa^{(p+\alpha)}(z_1,\ldots,z_{n_0}), \label{jackana}
\end{equation}
where $J_\kappa(z;\alpha)$ is a symmetric Jack polynomial. It was shown in \cite{exchangeterms}, using different eigenoperator properties to the Jack case, that the Macdonald analogue of (\ref{jackana}), with the ordering of the $t$-symmetric and the $t$-antisymmetric variables and partitions switched, holds when $p=1$. Our interest is in the Macdonald analogue of (\ref{jackana}) for $p\geq 1$. The stated related results, along with computational evidence, leads us to conjecture the following. 
\begin{conjecture}
\label{conjecture}Let $\eta^*=(\kappa_{n_0},\delta_{N_p})$, $(I,J)=(I^{n_0},J^{n_0,N_p})$ and $\kappa$ a partition such that $\kappa_1<\min(n_1,\ldots,n_p)$, then for $p\geq1$
\begin{equation}
S^{(I,J)}_{\eta^*}(z;q,t)=\Delta_t^{n_0,N_p}\left( z\right)P_\kappa(z_1,\ldots,z_{n_0};qt^p,t).\label{mac ana}
\end{equation}
\end{conjecture}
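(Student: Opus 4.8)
The plan is to characterise both sides of (\ref{mac ana}) as the unique (up to scalar) polynomial with the correct prescribed symmetry that is simultaneously an eigenfunction of an appropriate commuting family of operators, and then to match normalisations. First I would establish that the right-hand side $R(z):=\Delta_t^{n_0,N_p}(z)P_\kappa(z_1,\ldots,z_{n_0};qt^p,t)$ is $t$-symmetric with respect to $I=I^{n_0}$ and $t$-antisymmetric with respect to $J=J^{n_0,N_p}$. The $t$-symmetry in the first $n_0$ variables is inherited from $P_\kappa$ since $\Delta_t^{n_0,N_p}$ does not involve $z_1,\ldots,z_{n_0}$ and $T_i$ for $i\in I$ acts only on those variables; here one needs the standard fact that a polynomial symmetric in $z_i,z_{i+1}$ satisfies $T_if=tf$. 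For the $t$-antisymmetry with respect to $j\in J$, one computes $T_j(\Delta_t^{n_0,N_p}(z)g(z))$ where $g$ is symmetric in $z_j,z_{j+1}$ (as $P_\kappa$ is, not depending on those variables), reducing to the claim $T_j\Delta_t^{n_0,N_p}=-\Delta_t^{n_0,N_p}$. This is the Macdonald/Hecke analogue of the Vandermonde being antisymmetric: using $T_j = t + \frac{tz_j - z_{j+1}}{z_j - z_{j+1}}(s_j-1)$ and the factor $(z_j - t^{-1}z_{j+1})$ in $\Delta_t^{n_0,N_p}$, one checks the eigenrelation factor by factor, exactly as is done in the $p=1$ case of \cite{exchangeterms}; the nonconsecutive factors go along for the ride since $s_j$ fixes them.

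Next I would argue uniqueness via eigenoperators. By (\ref{2.13}) the prescribed symmetry polynomial $S^{(I,J)}_{\eta^*}$ is, up to normalisation, the unique polynomial with the stated $(I,J)$-symmetry lying in the span of $\{E_\mu : \mu \in W_{I\cup J}(\eta^*)\}$, equivalently the unique such polynomial whose $\mu$-expansion is supported on the $W_{I\cup J}$-orbit of $\eta^*$ — a condition one can phrase through the commuting operators $Y_i$ of (\ref{Yi}). So it suffices to show $R(z)$ has the correct leading term $z^{\eta^*}$ (after dividing by the appropriate scalar) and lies in that span. The leading-monomial count is combinatorial: the top term of $\Delta_t^{n_0,N_p}$ in the dominance-type order $\prec$ is $\prod_\beta z^{\delta}$ over the block $\widetilde J_\beta$, i.e. $z^{\delta_{N_p}}$ on the last coordinates, while $P_\kappa(z_1,\ldots,z_{n_0};qt^p,t)=m_\kappa + (\text{lower})$ contributes $z_1^{\kappa_1}\cdots z_{n_0}^{\kappa_{n_0}}$; the product's dominant monomial is $z^{(\kappa_{n_0},\delta_{N_p})}=z^{\eta^*}$, using the hypothesis $\kappa_1 < \min(n_1,\ldots,n_p)$ to guarantee no "collision" between the $\kappa$-part and the staircase parts that would let some cross term dominate. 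That the full expansion of $R$ stays within the $W_{I\cup J}$-orbit of $\eta^*$ should follow from the support of $P_\kappa$ (a bounded symmetric polynomial, monomials $\mu^+ \le \kappa$) convolved with the support of $\Delta_t^{n_0,N_p}$ (monomials obtained from $\delta_{N_p}$ by the block permutations), together with the symmetry established in the first step forcing membership in the right isotypic span.

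The substantive step — and the main obstacle — is identifying the correct Macdonald parameter, namely why $P_\kappa$ appears with $q \mapsto qt^p$ rather than $q$. The right way is eigenoperator-theoretic: one produces an operator $\mathcal{D}$ (built from the $Y_i$ of (\ref{Yi}), or a symmetrised version thereof) for which $S^{(I,J)}_{\eta^*}$ is an eigenfunction with a computable eigenvalue depending on $\eta^*=(\kappa_{n_0},\delta_{N_p})$, and then shows that the conjugated/restricted operator $\Delta_t^{-1}\,\mathcal{D}\,\Delta_t$ acting on symmetric functions of $z_1,\ldots,z_{n_0}$ coincides with the Macdonald operator whose eigenfunctions are $P_\kappa(\cdot;qt^p,t)$ — the shift $q\to qt^p$ emerging because each of the $p$ antisymmetrised blocks contributes a factor $t$ to the relevant eigenvalue/conjugation, just as the Jack parameter shifts $\alpha \to p+\alpha$ in (\ref{jackana}). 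For $p=1$ this is precisely the computation carried out in \cite{exchangeterms} (with $I$ and $J$ roles interchanged relative to (\ref{jackana})); the genuine difficulty for general $p$ is controlling the interaction of several antisymmetrised staircase blocks simultaneously, i.e. showing the conjugation by the product $\Delta_t^{n_0,N_p}=\prod_\beta(\cdots)$ of blocks of sizes $n_1,\ldots,n_p$ produces a clean single parameter shift by $t^p$ rather than block-dependent corrections. I would attempt this by induction on $p$, peeling off one block at a time and using the $p=1$ result of \cite{exchangeterms} together with the commutation of the operators attached to disjoint blocks of variables; once the parameter is pinned down, matching the constant $1$ in front of $z^{\eta^*}$ on both sides (already normalised) finishes the proof.
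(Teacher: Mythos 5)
The statement you are addressing is stated in the paper as Conjecture \ref{conjecture}: the paper does \emph{not} prove it, and only establishes the special cases $\kappa=0$ (Theorem \ref{special eta thm}) and the fully antisymmetric single-block identity $S_{\kappa+\delta}=\Delta_t(z)P_\kappa(z;q,qt)$ (Theorem \ref{prescribed sym}). Your first two steps --- that $R(z)=\Delta_t^{n_0,N_p}(z)P_\kappa(z_1,\dots,z_{n_0};qt^p,t)$ carries the prescribed $(I,J)$-symmetry and has leading monomial $z^{\eta^*}$ --- are correct and routine. But the entire content of the conjecture sits in your third step, and there you only describe what would need to be true rather than prove it. The identity you require is precisely the intertwining relation (\ref{dbig}), $D_n^1(q,t)\,\Delta_t^{n_0,N_p}(z)f = \Delta_t^{n_0,N_p}(z)\,D_n^1(qt^p,t)f$, which the paper states explicitly as an unproved belief supported only by trial cases. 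Note also that symmetry plus the correct leading term does not by itself pin down $R$: any product $\Delta_t^{n_0,N_p}\cdot g$ with $g$ symmetric in $z_1,\dots,z_{n_0}$ of leading term $m_\kappa$ has both properties, so without the eigenoperator computation you cannot exclude $P_\kappa(\cdot;q,t)$, or any other parameter, in place of $qt^p$.

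The specific mechanism you propose for that step --- induction on $p$, peeling off one antisymmetrised block at a time using the $p=1$ case of \cite{exchangeterms} together with ``commutation of the operators attached to disjoint blocks'' --- would fail as described. The operator $D_n^1$ is built from the $Y_i$ of (\ref{Yi}), each of which contains the global shift $\omega=s_{n-1}\cdots s_1\tau_1$ cycling all $n$ variables; it does not decompose into a sum of operators supported on individual blocks, so there is no commuting per-block family available to drive the induction. Moreover, the one-block result of \cite{exchangeterms} is proved only for the ordering with the antisymmetric variables preceding the symmetric ones, which is not the configuration that remains after a block is peeled off. This is exactly the obstruction the paper records: in the coefficient computation of Theorem \ref{prescribed sym} the factors $(tz_i-z_j)/(z_i-z_j)$ produced by the $T_i$ telescope cleanly against the single Vandermonde-type product, but with several staircase blocks it is ``not clear how one would keep track of the blocks of variables within the antisymmetrising set.'' Your proposal correctly identifies the target identity but does not close this gap; the statement remains a conjecture.
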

\bigskip 
We first prove a special case of the conjecture then consider the general case. 
\begin{theorem} \label{special eta thm}With $\eta^*=(0^{n_0},\delta_{N_p})$ and $(I,J)=(I^{n_0},J^{n_0,N_p})$ we have
\begin{equation*}
S^{(I,J)}_{\eta^*}(z;q,t)=\Delta_t^{n_0,N_p}\left( z\right).
\end{equation*}
\end{theorem}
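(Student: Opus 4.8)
The plan is to verify directly that the right-hand side $\Delta_t^{n_0,N_p}(z)$ has the two defining properties of $S^{(I,J)}_{\eta^*}(z;q,t)$ for $\eta^*=(0^{n_0},\delta_{N_p})$: namely the prescribed symmetry — $t$-symmetry with respect to the indices in $I=I^{n_0}$ and $t$-antisymmetry with respect to the indices in $J=J^{n_0,N_p}$ as in $(\ref{symmetric oij})$, $(\ref{asymmetric oij})$ — and the correct leading term, i.e. that the coefficient of $z^{\eta^*}$ in the monomial expansion is unity. Since the operator $O_{I,J}$ uniquely specifies $S_{\eta^*}^{(I,J)}$ up to normalisation (as stated after $(\ref{2.13})$) and we are told $a_\eta^{(I,J)}\neq 0$, matching these two properties is enough. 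Note that with $\eta^*=(0^{n_0},\delta_{N_p})$ the composition $\eta^*$ is already itself the natural label: its first $n_0$ entries are equal (consistent with $I$-symmetry) and within each block indexed by $\widetilde J_\beta$ the entries strictly decrease (consistent with $J$-antisymmetry).

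First I would check the leading-term normalisation. The product $\Delta_t^{n_0,N_p}(z)=\prod_\beta\prod_{i<j}(z_i-t^{-1}z_j)$, with $i,j$ ranging over $\widetilde J_\beta$, is a product of Vandermonde-type factors, one for each consecutive block of size $n_\beta$ sitting in the variables indexed by $\widetilde J_\beta$; expanding, the dominant monomial (with respect to the ordering $\prec$ on compositions of the relevant modulus) in the $\beta$-th block is $\prod_{i<j}z_i = z^{(n_\beta-1,n_\beta-2,\dots,1,0)}$ in those variables, with coefficient $1$ since each factor contributes its leading term $z_i$. Taking the product over $\beta$ and noting the first $n_0$ variables do not appear at all (exponent $0$), the overall leading monomial is exactly $z^{\eta^*}$ with coefficient $1$. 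The only subtlety is to argue that this is genuinely the top term under $\prec$; this is the standard fact that a Vandermonde-like product in a set of variables has $z_1^{m-1}z_2^{m-2}\cdots$ as its $\prec$-maximal monomial, and it should be dispatched in a line.

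Next, and this is the real content, I would verify the symmetry properties using the explicit form of $T_i$, namely $T_i=t+\frac{tz_i-z_{i+1}}{z_i-z_{i+1}}(s_i-1)$. For $i\in I=I^{n_0}$ the variables $z_i,z_{i+1}$ are among the first $n_0$ variables, which do not appear in $\Delta_t^{n_0,N_p}$ at all, so $s_i$ fixes it and $T_i\Delta_t^{n_0,N_p}=t\,\Delta_t^{n_0,N_p}$ immediately. For $j\in J$, say $j,j+1$ lie in a common block $\widetilde J_\beta$, I need $T_j\Delta_t^{n_0,N_p}=-\Delta_t^{n_0,N_p}$. Here I would isolate the dependence of $\Delta_t^{n_0,N_p}$ on $z_j,z_{j+1}$: the factors touching these two variables are $(z_j-t^{-1}z_{j+1})$ together with, for each other index $k$ in the block, the pair $(z_k-t^{-1}z_j)(z_k-t^{-1}z_{j+1})$ or $(z_j-t^{-1}z_k)(z_{j+1}-t^{-1}z_k)$ depending on position, and everything else is $s_j$-invariant. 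The key computation is the local one: writing $g=(z_j-t^{-1}z_{j+1})$ and checking that $T_j$ applied to $g$ (times the $s_j$-symmetric remainder) returns $-g$ times that remainder; using $s_j g = z_{j+1}-t^{-1}z_j = -t^{-1}g \cdot$(correction) — more carefully, $s_j g - g = (z_{j+1}-t^{-1}z_j)-(z_j-t^{-1}z_{j+1}) = -(1+t^{-1})(z_j-z_{j+1})$, so $\frac{tz_j-z_{j+1}}{z_j-z_{j+1}}(s_j-1)g = -(tz_j-z_{j+1})(1+t^{-1}) = -(t+1)z_j+(1+t^{-1})z_{j+1}$, and adding $tg=tz_j - z_{j+1}$ gives $-z_j + t^{-1}z_{j+1} = -(z_j-t^{-1}z_{j+1})=-g$, as required; the pairs $(z_k-t^{-1}z_j)(z_k-t^{-1}z_{j+1})$ are manifestly $s_j$-symmetric, so they are untouched. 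I expect this local Vandermonde computation, together with carefully bookkeeping which factors of $\Delta_t^{n_0,N_p}$ involve $z_j$ and $z_{j+1}$ and confirming the rest is $s_j$-invariant, to be the main obstacle — not because it is deep, but because one must be scrupulous about the index ranges in $(\ref{JnoNp})$ and the sets $\widetilde J_\beta$. Once both properties are in hand, uniqueness up to normalisation closes the argument.
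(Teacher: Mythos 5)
Your verification that $\Delta_t^{n_0,N_p}(z)$ carries the prescribed symmetry is correct: the local computation $T_j(z_j-t^{-1}z_{j+1})=-(z_j-t^{-1}z_{j+1})$, the $s_j$-invariance of the remaining factors touching $z_j,z_{j+1}$, the trivial action of $T_i$ for $i\in I^{n_0}$ (those variables are absent), and the claim that the coefficient of $z^{\eta^*}$ is $1$ all check out. The gap is in the final step. The remark after (\ref{2.13}) asserts only that $S_{\eta^*}^{(I,J)}$ is well defined, i.e.\ independent of which $E_\eta$ in the orbit one feeds to $O_{I,J}$; it does not assert that \emph{any} polynomial with the prescribed symmetry and unit coefficient of $z^{\eta^*}$ equals $S_{\eta^*}^{(I,J)}$. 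That stronger statement is false in general: with $J=\emptyset$ and $I=\{1,\dots,n-1\}$ the prescribed-symmetry polynomial is $P_\kappa$, yet the monomial symmetric function $m_\kappa$ is also $t$-symmetric in every pair and has unit coefficient of $z^\kappa$, and $m_\kappa\neq P_\kappa$ in general. So ``matching these two properties is enough'' is not justified as written.

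What closes the argument — and is what the paper's own one-line proof implicitly relies on — is a degree count combined with the divisibility consequence of $t$-antisymmetry: if $T_jf=-f$, then rearranging $T_jf=-f$ into $(tz_j-z_{j+1})(s_jf-f)=-(1+t)(z_j-z_{j+1})f$ and setting $z_{j+1}=tz_j$ forces $f$ to vanish there, hence to be divisible by $z_j-t^{-1}z_{j+1}$; the standard extension of this over each block $\widetilde{J}_\beta$ shows that any polynomial $t$-antisymmetric with respect to $J$ is divisible by $\Delta_t^{n_0,N_p}(z)$. Since $S_{\eta^*}^{(I,J)}$ is homogeneous of degree $|\eta^*|=\sum_\beta n_\beta(n_\beta-1)/2=\deg\Delta_t^{n_0,N_p}$, the quotient is a constant, and the normalisation of the coefficient of $z^{\eta^*}$ fixes it to be $1$. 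With that supplement the theorem follows directly from the properties of $S_{\eta^*}^{(I,J)}$ (which is the paper's route), and your explicit verification that $\Delta_t^{n_0,N_p}$ itself satisfies $T_j\Delta_t=-\Delta_t$ becomes unnecessary; without it, the identification of $\Delta_t^{n_0,N_p}$ with $S_{\eta^*}^{(I,J)}$ does not follow from the two properties you establish.
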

\begin{proof}
The result follows from $S_{\eta^*}^{(I,J)}$ having leading term $z^{(0^{n_0},\delta_{N_p})}$ and the requirement that $S_{\eta^*}^{(I,J)}$ be $t$-antisymmetric with respect to $J^{n_0,N_p}$.
\hfill $\square$ \end{proof}
\bigskip
Due to the structure of the eigenoperator for the Macdonald polynomials the methods used in the Jack theory cannot be generalised to prove Conjecture \ref{conjecture}, similarly the proof in \cite{exchangeterms} only works for the one-block case with the antisymmetric variables before the symmetric. However, within \cite{peter4} a brief note is made on how one may show the following result
\begin{equation}
S_{\rho+\delta}(z;\alpha)=\Delta(z)J_\kappa^{(\alpha/(1+\alpha))}(z;\alpha), \label{jack marshall}
\end{equation}
where $S_{\rho+\delta}$ is the antisymmetric Jack polynomial, using the fact that 
\begin{equation*}
\widetilde{H}^{(C,Ex)}_\alpha \Delta f=\Delta \widetilde{H}^{(C,Ex)}_{(\alpha/(1+\alpha))}f,
\end{equation*}
We refer the reader to \cite{peter4} for the definition of the Jack polynomial eigenoperator $\widetilde{H}^{(C,Ex)}_\alpha$ and further details of the suggested method. Low order cases have indicated that this method can be generalised to prove (\ref{jackana}). Therefore, although the Macdonald analogue of (\ref{jack marshall}), was found by Marshall in \cite{marshallmacdonald} using the orthogonality properties of the Macdonald polynomials we give the alternative derivation as suggested by \cite{peter4} and give suggestions as to how it could be generalised to prove (\ref{mac ana}). Before stating the theorem we introduce the eigenoperator for the symmetric Macdonald polynomials \cite{peter3}
\begin{equation*}
D_{n}^{1}\left( q,t\right) :=t^{n-1}\sum_{i=1}^{n}Y_{i},
\end{equation*}%
explicitly, 
\begin{equation*}
D_{n}^{1}\left( q,t\right) P_\kappa(z)=c_{\eta
^{\ast }}P_\kappa(z),\hspace{0.5cm}c_{\eta ^{\ast }}\in \mathbb{Q}\left( q,t\right). 
\end{equation*}%
\begin{theorem}
\label{prescribed sym}We have 
\begin{equation}
S_{\kappa +\delta }(z;q,t)=\Delta _{t}\left( z\right) P_{\kappa }\left(
z;q,qt\right).  \label{sym presym relationship}
\end{equation}%
\end{theorem}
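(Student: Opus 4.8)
The plan is to recognise the quotient $f(z):=S_{\kappa+\delta}(z;q,t)/\Delta_t(z)$ as the symmetric Macdonald polynomial $P_\kappa(z;q,qt)$. Concretely, I will verify that $f$ is a symmetric polynomial of the triangular form $m_\kappa+(\text{lower order in dominance})$ and that it is an eigenfunction of $D_n^1(q,qt)$ with eigenvalue $\lambda_\kappa:=\sum_{i=1}^nq^{\kappa_i}(qt)^{n-i}$; since the $\lambda_\mu$ over partitions $\mu$ are pairwise distinct for generic $q,t$ and $P_\mu(z;q,qt)$ is the unique symmetric eigenfunction of $D_n^1(q,qt)$ of the form $m_\mu+(\text{lower})$, this forces $f=P_\kappa(z;q,qt)$, which is (\ref{sym presym relationship}). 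Three ingredients are needed: (i) $\Delta_t(z)$ divides $S_{\kappa+\delta}(z;q,t)$ with symmetric quotient; (ii) $D_n^1(q,t)S_{\kappa+\delta}(z;q,t)=\lambda_\kappa S_{\kappa+\delta}(z;q,t)$; and (iii) a gauge identity conjugating $D_n^1(q,t)$ into $D_n^1(q,qt)$ through $\Delta_t$.

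Ingredient (i) is structural. By Theorem \ref{special eta thm} (with $n_0=0$ and a single block), $\Delta_t(z)=\prod_{1\le i<j\le n}(z_i-t^{-1}z_j)$ is exactly the fully $t$-antisymmetric Macdonald polynomial $S_\delta(z;q,t)$, and, as in the classical case $t=1$, every $t$-alternating polynomial is of the form $\Delta_t(z)\cdot(\text{symmetric polynomial})$: the requirement $T_ih=-h$ for all $i$ is equivalent to $s_ih=-\frac{z_i-tz_{i+1}}{tz_i-z_{i+1}}h$, from which one reads off that $\Delta_t(z)$ divides $h$ and that $h/\Delta_t$ is symmetric. Hence $f$ is a symmetric polynomial, and comparing the dominant monomials $z^{\kappa+\delta}$ of $S_{\kappa+\delta}$ and $z^\delta$ of $\Delta_t$ (both with coefficient unity) shows the leading term of $f$ is $m_\kappa$. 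For (ii), it is standard that any symmetric function of the commuting Cherednik operators $Y_1,\dots,Y_n$ commutes with every $T_i$; hence $D_n^1(q,t)=t^{n-1}\sum_iY_i$ commutes with the antisymmetriser $U^-$, and since $S_{\kappa+\delta}(z;q,t)$ is a nonzero scalar multiple of $U^-E_{\kappa+\delta}(z;q,t)$ it is an eigenfunction of $D_n^1(q,t)$ with the same eigenvalue as $E_{\kappa+\delta}$, namely $t^{n-1}\sum_i\overline{(\kappa+\delta)}_i$. Because $\kappa+\delta$ is a strict partition, (\ref{legcolength}) gives $l'_{\kappa+\delta}(i)=i-1$, so this eigenvalue equals $t^{n-1}\sum_iq^{\kappa_i+n-i}t^{-(i-1)}=\lambda_\kappa$.

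The crux is (iii), the Macdonald counterpart of the Jack relation $\widetilde{H}^{(C,Ex)}_\alpha\Delta f=\Delta\widetilde{H}^{(C,Ex)}_{(\alpha/(1+\alpha))}f$ of \cite{peter4}:
\[
D_n^1(q,t)\bigl(\Delta_t(z)\,g(z)\bigr)=\Delta_t(z)\,D_n^1(q,qt)\,g(z)\qquad\text{for every symmetric }g(z).
\]
Granting this, $\Delta_t\,D_n^1(q,qt)f=D_n^1(q,t)(\Delta_tf)=D_n^1(q,t)S_{\kappa+\delta}(z;q,t)=\lambda_\kappa S_{\kappa+\delta}(z;q,t)=\lambda_\kappa\Delta_tf$, so $D_n^1(q,qt)f=\lambda_\kappa f$ and the reduction above completes the proof. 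To establish the gauge identity I would write $D_n^1(q,t)=t^{n-1}\sum_iY_i$ in difference-reflection form — expanding the products of Demazure-Lusztig operators inside the $Y_i$ and collecting terms exhibits it as a pure $q$-shift part $\sum_ic_i(z;t)\tau_i$ together with ``exchange'' terms built from the transpositions $s_{ij}$ — and then conjugate by $\Delta_t$ using $s_i\Delta_t=-\frac{z_i-tz_{i+1}}{tz_i-z_{i+1}}\Delta_t$ and the induced action of the $s_{ij}$ and of $\tau_i$ on $\Delta_t$, checking that on symmetric inputs the exchange part feeds back into the shift part so that the net effect is precisely the substitution $t\mapsto qt$ in the coefficient functions.

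I expect (iii) to be the main obstacle. Because $\Delta_t$ is not a symmetric function, the conjugation genuinely mixes the shift and exchange parts of $D_n^1(q,t)$, and the bookkeeping needed to see that the cross-terms recombine into $D_n^1(q,qt)$ is delicate; this is also why one cannot shortcut the argument by conjugating Macdonald's symmetric first-order operator $\sum_i\bigl(\prod_{j\ne i}\frac{tz_i-z_j}{z_i-z_j}\bigr)\tau_i$ directly, as that operator agrees with $t^{n-1}\sum_iY_i$ only on the symmetric subspace, does not preserve $t$-antisymmetry, and has $\Delta_t$-conjugate different from $D_n^1(q,qt)$ — the exchange terms, recorded by the superscript ``$Ex$'', are essential. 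Finally, the same scheme carried out with the block Vandermonde $\Delta_t^{n_0,N_p}$ and the parameter $qt^p$ in place of $\Delta_t$ and $qt$ is the natural route towards Conjecture \ref{conjecture}.
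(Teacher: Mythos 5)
Your overall strategy coincides with the paper's: both reduce the theorem to the intertwining (gauge) identity
\begin{equation*}
D_{n}^{1}(q,t)\,\bigl(\Delta_{t}(z)g(z)\bigr)=\Delta_{t}(z)\,D_{n}^{1}(q,qt)\,g(z)\qquad\text{for symmetric }g,
\end{equation*}
which is precisely (\ref{prescribed sym result}), and then invoke uniqueness of the symmetric eigenfunction of $D_n^1(q,qt)$ with leading term $m_\kappa$. Your ingredients (i) and (ii) are correct and in fact spell out details the paper leaves implicit: the divisibility of any $t$-antisymmetric polynomial by $\Delta_t$ with symmetric quotient (via $(tz_i-z_{i+1})s_ih=-(z_i-tz_{i+1})h$, which forces $h$ to vanish on $z_{i+1}=tz_i$), and the eigenvalue computation $t^{n-1}\sum_i\overline{(\kappa+\delta)}_i=\sum_iq^{\kappa_i}(qt)^{n-i}$ using $l'_{\kappa+\delta}(i)=i-1$. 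Your side remark that one cannot simply conjugate Macdonald's symmetric difference operator, because it agrees with $t^{n-1}\sum_iY_i$ only on symmetric inputs and $\Delta_t g$ is not symmetric, is also a correct and relevant observation.

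However, there is a genuine gap: ingredient (iii) is asserted, not proven, and it is the entire substance of the paper's proof. You describe a plan (expand $\sum_iY_i$ into shift and exchange parts, conjugate by $\Delta_t$, and ``check that the cross-terms recombine'') and explicitly flag it as the main obstacle, but the recombination is exactly the nontrivial computation. The paper carries it out concretely: using $t$-antisymmetry to collapse $t^{n-1}\sum_iY_i$ acting on $\Delta_tg$ to $\sum_m(-t)^{m-1}T_m\cdots T_{n-1}\omega$, computing the coefficients $\widehat{c}[k,m]$ of each $g(qz_k,z_1,\ldots,z_{n-1})$ by backward induction on $m$ (tracking how $s_i$ rescales $\Delta_t$ at each step), proving the telescoping sum $\sum_{m=j}^k(-t)^{m-1}\widehat{c}[k,m]=(-t)^{j-1}\prod_{i=j}^{k-1}\frac{z_k-tz_i}{z_i-z_k}\prod_{i=k+1}^n\frac{tz_k-z_i}{z_k-z_i}$ by a second induction, and finally absorbing the factor $\Delta_t(qz_k,z_1,\ldots,z_{n-1})/\Delta_t(z)$ to land on $\Delta_t(z)\prod_{i\neq k}\frac{qtz_k-z_i}{z_k-z_i}$; a parallel computation identifies $D_n^1(q,qt)$ on symmetric functions with $\sum_k\prod_{i\neq k}\frac{qtz_k-z_i}{z_k-z_i}\tau_k$. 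Until you execute this (or an equivalent) computation, the proof is incomplete at its central step.
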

\begin{proof}
Since the unique symmetric eigenfunction of $D_{n}^{1}\left( q,qt\right)$ with leading term $m_{\kappa}$ (the monomial symmetric polynomial indexed by $\kappa$) is $P_{\kappa }\left(
z;q,qt\right)$, (\ref{sym presym relationship}) will hold if for any symmetric function $f(z)$
\begin{equation}
D_{n}^{1}\left( q,t\right) \Delta _{t}\left( z\right) f\left( z\right)
=\Delta _{t}\left( z\right) D_{n}^{1}\left( q,qt\right) f\left( z\right) .
\label{prescribed sym result}
\end{equation}
Hence, our task will be to prove (\ref{prescribed sym result}). We begin by deriving a more explicit form for the left hand side of (\ref{prescribed sym result}). Since $%
\Delta _{t}\left( z\right) f\left( z\right) $ is t-antisymmetric the left hand side can be rewritten as 
\begin{equation*}
\left( T_{1}...T_{n-1}\omega -tT_{2}...T_{n-1}\omega +...+\left( -t\right)
^{n-1}\omega \right) \Delta _{t}\left( z\right) f\left( z\right)
\end{equation*}%
which, by the definition of $\omega $ is equal to 
\begin{equation*}
\left( T_{1}...T_{n-1}-tT_{2}...T_{n-1}+...+\left( -t\right) ^{n-1}\right)
\Delta _{t}\left( qz_{n},z_{1},...,z_{n-1}\right) f\left(
qz_{n},z_{1},...,z_{n-1}\right) .
\end{equation*}%
For simplicity we let $\Theta _{m}=T_{m}...T_{n-1},$ $g_{k}=\Delta
_{t}\left( qz_{k},z_{1},...,z_{n}\right) f\left(
qz_{k},z_{1},...,z_{n}\right) $ and 
\begin{equation*}
T_{i} =\frac{\left( 1-t\right) z_{i+1}}{z_{i}-z_{i+1}}+\frac{tz_{i}-z_{i+1}%
}{z_{i}-z_{i+1}}s_{i}.
\end{equation*}%
We begin by deducing the coefficient, $\widehat{c}[k,m]$ say, of each $g_{k}$ after being
operated on by $\Theta _{m}$. For this to be non-zero we require $m\leq k$.  For $qz_{k}$ to
appear in the first position of $f$ we must take the term that has had $s_{i}$ act
on it for each $i=n-1,...,k+1$ and therefore%
\begin{equation*}
\widehat{c}[k,k]=\prod_{i=k+1}^{n}\frac{tz_{k}-z_{i}}{z_{k}-z_{i}}.
\end{equation*}%
It can be shown by (backward) induction on $m$ that for $m<k$%
\begin{equation*}
\widehat{c}[k,m]=\left( -1\right) ^{k-m}\frac{\left( t-1\right) z_{k}}{%
z_{m}-z_{k}}\prod_{i=m+1}^{k-1}\frac{tz_{i}-z_{k}}{z_{i}-z_{k}}%
\prod_{i=k+1}^{n}\frac{tz_{k}-z_{i}}{z_{k}-z_{i}}.
\end{equation*}%
We note that an important part of the inductive proof is to keep $\Delta $
and $f$ of the form $\Delta _{t}\left( qz_{k},z_{1},...,z_{n}\right) f\left(
qz_{k},z_{1},...,z_{n}\right).$ This is done by observing that $%
s_{i}f\left( z\right) =f\left( z\right) $ and 
\begin{equation*}
s_{i}\Delta _{t}\left( z\right) =\frac{tz_{i+1}-z_{i}}{tz_{i}-z_{i+1}}\Delta
_{t}\left( z\right) .
\end{equation*}%
To derive the coefficient of $\Delta _{t}\left(
qz_{k},z_{1},...,z_{n}\right) f\left( qz_{k},z_{1},...,z_{n}\right) $ in the
overall operator we must evaluate $\sum_{m=1}^{k}\left( -t\right) ^{m-1}\widehat{c}[k,m].$ This is done by proving
\begin{equation*}
\sum_{m=j}^{k}\left( -t\right) ^{m-1}\widehat{c}[k,m]=\left( -t\right)
^{j-1}\prod_{i=j}^{k-1}\frac{z_{k}-tz_{i}}{z_{i}-z_{k}}\prod_{i=k+1}^{n}%
\frac{tz_{k}-z_{i}}{z_{k}-z_{i}}
\end{equation*}%
inductively with a base case of $j=k-1.$ It follows that the coefficient of $%
f\left( qz_{k},z_{1},...,z_{n}\right) $ in $D_{n}^{1}\left( q,t\right)
\Delta _{t}\left( z\right) f\left( z\right) $ is 
\begin{equation}
\prod_{i=j}^{k-1}\frac{z_{k}-tz_{i}}{z_{i}-z_{k}}\prod_{i=k+1}^{n}\frac{%
tz_{k}-z_{i}}{z_{k}-z_{i}}\times \Delta _{t}\left(
qz_{k},z_{1},...,z_{n}\right).  \label{the coefficient}
\end{equation}%
By noting 
\begin{equation*}
\Delta _{t}\left( qz_{k},z_{1},...,z_{n}\right) =\prod_{i=j}^{k-1}\frac{%
qz_{k}-t^{-1}z_{i}}{z_{i}-t^{-1}z_{k}}\prod_{i=k+1}^{n}\frac{%
qz_{k}-t^{-1}z_{i}}{z_{k}-t^{-1}z_{i}}\Delta _{t}\left( z\right)
\end{equation*}%
we simpify $\left( \ref{the coefficient}\right) $ to%
\begin{equation*}
\Delta _{t}\left( z\right) \prod_{\substack{ i=1  \\ i\not=k}}^{n}\frac{%
qtz_{k}-z_{i}}{z_{k}-z_{i}},
\end{equation*}%
and hence%
\begin{equation*}
D_{n}^{1}\left( q,t\right) \Delta _{t}\left( z\right) f\left( z\right)
=\Delta _{t}\left( z\right) \sum_{k=1}^{n}\prod_{\substack{ i=1  \\ i\not=k}}%
^{n}\frac{qtz_{k}-z_{i}}{z_{k}-z_{i}}f\left( qz_{k},z_{1},...,z_{n}\right) .
\end{equation*}%
We now simplify the right hand side of (\ref{prescribed sym result}). We have 
\begin{eqnarray}
D_{n}^{1}\left( q,qt\right) f\left( z\right) &=&\left( qt\right)
^{n-1}\sum_{i=1}^{n}Y_{i}^{(q)}f\left( z\right) \notag\\
&=&\left( qt\right) ^{n-1}\left( \left( qt\right) ^{1-n}T_{1}^{\left(
q\right) }...T_{n-1}^{\left( q\right) }\omega +...+\omega T_{1}^{-1\left(
q\right) }...T_{n-1}^{-1\left( q\right) }\right) f\left( z\right)   \label{early step} 
\end{eqnarray}%
where $Y_{i}^{(q)},\text{ }T_{i}^{\left( q\right) },\text{ }T_{i}^{-1\left( q\right) }$ are
the operators $Y_{i},\text{ }T_{i},\text{ }T_{i}^{-1}$ with $t$ replaced by $qt.$ Since $f\left( z\right) $ is
symmetric we have $T_{i}^{-1\left( q\right) }f\left( z\right) =\left(
qt\right) ^{-1}f\left( z\right) .$ Using this and the action of $\omega $ $%
\left( \ref{early step}\right) $ simplifies to%
\begin{equation*}
(T_{1}^{\left( q\right) }...T_{n-1}^{\left( q\right) }+T_{n-1}^{\left(
q\right) }+1)f\left( qz_{n},z_{1},...,z_{n-1}\right) .
\end{equation*}%
We let $\Theta _{m}^{\left( q\right) }=T_{m}^{\left( q\right)
}...T_{n-1}^{\left( q\right) }$ and denote the coefficient of each $f\left(
qz_{k},z_{1},...,z_{n}\right) $ by $\widehat{c}_{q}[k,m]$ for each $m\leq k.$
Similarly to before 
\begin{equation*}
\widehat{c}_{q}[k,k]=\prod_{i=k+1}^{n}\frac{qtz_{k}-z_{i}}{z_{k}-z_{i}}
\end{equation*}%
and, by induction, 
\begin{equation*}
\widehat{c}_q[k,m]=\left( -1\right) ^{k-m}\frac{\left( qt-1\right) z_{k}}{%
z_{m}-z_{k}}\prod_{i=m+1}^{k-1}\frac{qtz_{k}-z_{i}}{z_{i}-z_{k}}%
\prod_{i=k+1}^{n}\frac{qtz_{k}-z_{i}}{z_{k}-z_{i}}.
\end{equation*}%
For use in (\ref{early step}) we require $\sum_{m=1}^{k}\widehat{c}_{q}[k,m].$ This is found by induction on 
\begin{equation*}
\sum_{m=j}^{k}\widehat{c}_q[k,m]=\prod_{\substack{ i=j  \\ i\not=k}}^{n}\frac{%
qtz_{k}-z_{i}}{z_{k}-z_{i}}.
\end{equation*}%
Therefore
\begin{equation*}
\Delta _{t}\left( z\right) D_{n}^{1}\left( q,qt\right) f\left(
z\right)=\Delta _{t}\left( z\right) \sum_{k=1}^{n}\prod_{\substack{ i=1  \\ i\not=k}}%
^{n}\frac{qtz_{k}-z_{i}}{z_{k}-z_{i}}f\left( qz_{k},z_{1},...,z_{n}\right),
\end{equation*}
which shows (\ref{prescribed sym result}), and consequently (\ref{sym presym relationship}), to be true.
\hfill $\square$ \end{proof}
\bigskip 
Trial cases suggest that for a symmetric function $f\left(
z_{1},..,z_{n_{0}}\right) $ with leading term $m_{\kappa }$ and $\kappa
_{1}<\min \left( n_{1},...,n_{p}\right) $ one has
\begin{equation}
D_{n}^{1}\left( q,t\right) \Delta _{t}^{n_0,N_p}\left( z\right)
f\left( z_{1},..,z_{n_{0}}\right) =\Delta _{t}^{n_0,N_p}\left(
z\right) D_{n}^{1}\left( qt^{p},t\right) f\left( z_{1},..,z_{n_{0}}\right). \label{dbig}
\end{equation}
We believe it to be possible to prove Conjecture \ref{conjecture} by first proving (\ref{dbig}). At this stage however it is not clear how one would keep track of the blocks of variables within the antisymmetrising set, making a strategy used to prove Theorem \ref{prescribed sym} problematic.
%

\subsection{A consequence of the conjecture}
A major result in the theory of Jack polynomials with prescribed symmetry
is the evaluation $U_{\eta ^{\ast }}^{\left( I,J\right) }\left(
1^{n};\alpha \right) $ \cite{dunkl}, where $U_{\eta ^{\ast }}^{\left( I,J\right) }\left(
z\right) $ is defined by%
\begin{equation*}
U_{\eta ^{\ast }}^{\left( I,J\right) }\left( z;\alpha\right) :=\frac{%
S_{\eta ^{\ast }}^{\left( I,J\right) }\left( z;\alpha\right) }{\Delta ^{n_0,N_p}\left( z\right) }.
\end{equation*}%
It does not appear possible to generalise the Jack method  to find the analogous Macdonald evaluation, $U_{\eta ^{\ast }}^{\left( I,J\right) }( t^{\underline{\delta }%
};q,t)$, where
\begin{equation*}
U_{\eta ^{\ast }}^{\left( I,J\right) }\left( z;q,t\right) :=\frac{%
S_{\eta ^{\ast }}^{\left( I,J\right) }\left( z;q,t\right) }{\Delta_t ^{n_0,N_p}\left( z\right) }.
\end{equation*}
by (\ref{mac ana}) use of the evaluation formula for the symmetric Macdonald polynomials \cite{macdonald} 
\begin{equation*}
P_{\kappa }( t^{\underline{\delta }};q,t) =\prod_{s\in\text{diag}(\kappa)}\frac{1-q^{a'_{\eta}(s)}t^{n-l'_{\eta}(s)}}{1-q^{a'_{\eta}(s)}t^{l'_{\eta}(s)+1}}%
\end{equation*} 
gives the following as a corollary to Conjecture \ref{conjecture}
\begin{equation*}
U_{\eta ^{\ast }}^{\left( I,J\right) }( t^{\underline{\delta }%
};q,t) =\prod_{s\in\text{diag}(\eta^*)}\frac{1-(qt^{p})^{a'_{\eta}(s)}t^{n-l'_{\eta}(s)}}{1-(qt^{p})^{a'_{\eta}(s)}t^{l'_{\eta}(s)+1}}.
\end{equation*} 
\section{The Inner Product of Prescribed Symmetry
Polynomials and Constant Term Identities\label{prescribed symmetry
polynomials and the inner product}}

\subsection{The inner product of prescribed symmetry polynomials}
We begin this section by finding the explicit formulas for the inner product of the prescribed symmetry polynomials
\begin{equation}
\left\langle S_{\eta ^{\ast }}^{\left( I,J\right) }\left( z\right) ,S_{\eta
^{\ast }}^{\left( I,J\right) }\left( z\right) \right\rangle _{q,t},
\label{presym inner}
\end{equation}
in terms of nonsymmetric Macdonald polynomials. We then proceed to show how these formulas can be used to prove specialisations of certain constant term conjectures.  We first consider the inner product of $O_{I,J}E_{\eta}$. 
\begin{lemma}
\label{lemma ratio inner}With $K_{I,J}(t):=\Pi _{i=1}^{s}[| \widetilde{J}_{j}|
]_{t}!\Pi _{i=1}^{r}[| \widetilde{I}_{i}| ]_{t^{-1}}!$ we have 
\begin{equation*}
\left\langle O_{I,J}E_{\eta }\left( z\right) ,O_{I,J}E_{\eta }\left(
z\right) \right\rangle _{q,t}=K_{I,J}(t)\left\langle O_{I,J}E_{\eta }\left(
z\right) ,E_{\eta }\left( z\right) \right\rangle _{q,t}
\label{oij proposition},
\end{equation*}%
where $I_{i}$ and $J_{j}$ denote the decomposition of $I$ and 
$J$ as a union of sets of consecutive integers.
\end{lemma}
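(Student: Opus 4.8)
The plan is to exploit the self-adjointness properties of the operators $T_i$ (equivalently $T_\sigma$) with respect to the inner product $\langle\cdot,\cdot\rangle_{q,t}$, together with the fact that $O_{I,J}E_\eta$ is $t$-symmetric in the variables indexed by $I$ and $t$-antisymmetric in those indexed by $J$. First I would recall (from Macdonald/Cherednik theory) that the $T_i$ are, up to the substitution $(q,t)\mapsto(q^{-1},t^{-1})$ built into the second slot of $\langle\cdot,\cdot\rangle_{q,t}$, essentially self-adjoint; concretely one has $\langle T_i f, g\rangle_{q,t} = \langle f, T_i g\rangle_{q,t}$ for $1\le i\le n-1$. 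Consequently $\langle T_\sigma f, g\rangle_{q,t} = \langle f, T_{\sigma^{-1}} g\rangle_{q,t}$ for all $\sigma\in S_n$, and since $l(\sigma)=l(\sigma^{-1})$ this gives $\langle O_{I,J} f, g\rangle_{q,t} = \langle f, O_{I,J}' g\rangle_{q,t}$ where $O_{I,J}'$ is $O_{I,J}$ with each $T_\omega$ replaced by $T_{\omega^{-1}}$ (the sign $(-1/t)^{l(\omega_J)}$ is unchanged). A reindexing of the sum over $W_{I\cup J}$ shows $O_{I,J}' = O_{I,J}$, so in fact $O_{I,J}$ is self-adjoint: $\langle O_{I,J}f,g\rangle_{q,t}=\langle f,O_{I,J}g\rangle_{q,t}$.

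Applying this with $f = O_{I,J}E_\eta$ and $g = E_\eta$ reduces the left-hand side to $\langle E_\eta, O_{I,J}^2 E_\eta\rangle_{q,t}$, so the lemma will follow once I show $O_{I,J}^2 = K_{I,J}(t)\,O_{I,J}$ as an operator on polynomials (or at least applied to $E_\eta$; operator identity is cleaner). For this I would use the defining relations (\ref{symmetric oij})--(\ref{asymmetric oij}): any element in the image of $O_{I,J}$ satisfies $T_i h = t h$ for $i\in I$ and $T_j h = -h$ for $j\in J$. Write $O_{I,J}=\sum_{\omega\in W_{I\cup J}}(-1/t)^{l(\omega_J)}T_\omega$ and let it act on such an $h$. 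Using the decomposition $\omega=\omega_I\omega_J$ and the factorisation $T_\omega = T_{\omega_I}T_{\omega_J}$ (valid because the reduced word for $\omega$ splits accordingly), and applying $T_j h = -h$ to collapse each $T_{\omega_J}h = (-1)^{l(\omega_J)}h$ and $T_i h = th$ to collapse $T_{\omega_I}h = t^{l(\omega_I)}h$, each term contributes $(-1/t)^{l(\omega_J)}\cdot(-1)^{l(\omega_J)}t^{l(\omega_I)}h = t^{l(\omega_I)-l(\omega_J)}\cdot(t^{-1})^{... }$; summing over $W_{I\cup J} = W_I\times W_J$ factorises as $\big(\sum_{\omega_I\in W_I}t^{l(\omega_I)}\big)\big(\sum_{\omega_J\in W_J}t^{-l(\omega_J)}\big)h$, i.e. exactly $K_{I,J}(t)\,h$ by the Poincaré polynomial identity $\sum_{\sigma\in W_I}t^{l(\sigma)}=\prod_i [|\widetilde I_i|]_t!$ and similarly for $W_J$ with $t\mapsto t^{-1}$. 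Since $O_{I,J}E_\eta$ lies in this image, $O_{I,J}(O_{I,J}E_\eta) = K_{I,J}(t)\,O_{I,J}E_\eta$, which is what is needed.

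The main obstacle I anticipate is the bookkeeping in the step $T_\omega h = t^{l(\omega_I)}(-1)^{l(\omega_J)}h$: one must be careful that $T_\omega=T_{\omega_I}T_{\omega_J}$ really holds with additive length (i.e.\ that concatenating the reduced words for $\omega_I$ and $\omega_J$ is reduced), which relies on the separation condition imposed on $I$ and $J$ ($i\pm1\notin J$ for $i\in I$, etc.) ensuring $W_I$ and $W_J$ commute and sit in "independent" parabolic pieces; and that the eigen-relations $T_i h=th$, $T_j h=-h$ genuinely let one evaluate $T_{\omega_I}$ and $T_{\omega_J}$ termwise rather than only after full symmetrisation. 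The self-adjointness of $T_i$ is standard but I would state it with a one-line justification (it follows from $\langle z^\mu,z^\nu\rangle_{q,t}$ symmetry and the explicit form of $T_i$, or cite \cite{macdonald}). Everything else—the Poincaré polynomial evaluation and the reindexing $\omega\mapsto\omega^{-1}$—is routine.
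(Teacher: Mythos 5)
There is a genuine gap at the foundation of your argument: $T_i$ is \emph{not} self-adjoint with respect to $\langle\cdot,\cdot\rangle_{q,t}$. The correct statement, and the one the paper's proof rests on, is that the adjoint of $T_i$ is $T_i^{-1}$, i.e.\ $\langle f, T_i g\rangle_{q,t} = \langle T_i^{-1}f, g\rangle_{q,t}$ (the $T_i$ are ``unitary'', not symmetric). Your claim $\langle T_if,g\rangle_{q,t}=\langle f,T_ig\rangle_{q,t}$ combined with this would force $T_i=T_i^{-1}$, i.e.\ $T_i^2=1$, contradicting the Hecke relation $T_i^2=(t-1)T_i+t$. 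Consequently $O_{I,J}$ is not self-adjoint either: since the inner product is conjugate-linear in the second slot under $q,t\mapsto q^{-1},t^{-1}$, its adjoint is $\sum_{\omega}(-t)^{l(\omega_J)}T_\omega^{-1}$, and the reduction of the left-hand side to $\langle E_\eta, O_{I,J}^2E_\eta\rangle_{q,t}$ is unjustified.

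The error propagates into the constant. Your evaluation of $O_{I,J}$ on an element $h$ of its own image is fine as a computation, but the scalar it produces is $\bigl(\sum_{\omega_I}t^{l(\omega_I)}\bigr)\bigl(\sum_{\omega_J}t^{-l(\omega_J)}\bigr)=\prod_i[|\widetilde{I}_i|]_t!\,\prod_j[|\widetilde{J}_j|]_{t^{-1}}!$, which is $K_{I,J}(t^{-1})$ rather than $K_{I,J}(t)$: in the lemma the $J$-blocks carry $[\cdot]_t!$ and the $I$-blocks carry $[\cdot]_{t^{-1}}!$. (You would also still need to relate $\langle E_\eta,O_{I,J}E_\eta\rangle_{q,t}$ to $\langle O_{I,J}E_\eta,E_\eta\rangle_{q,t}$, and the inner product is not symmetric.) The correct bookkeeping, which is the paper's proof, is: expand the second $O_{I,J}$ as $\sum_\omega(-1/t)^{l(\omega_J)}T_\omega$, move each $T_\omega$ across as $T_\omega^{-1}$, use $T_i^{-1}O_{I,J}E_\eta=t^{-1}O_{I,J}E_\eta$ for $i\in I$ and $T_j^{-1}O_{I,J}E_\eta=-O_{I,J}E_\eta$ for $j\in J$ to extract $(-1)^{l(\omega_J)}t^{-l(\omega_I)}$ from the first slot, and conjugate $(-1/t)^{l(\omega_J)}$ to $(-t)^{l(\omega_J)}$ when pulling it out of the second slot; the product is $t^{l(\omega_J)-l(\omega_I)}$, and summing over $W_{I\cup J}=W_I\times W_J$ gives exactly $K_{I,J}(t)$. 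Your remaining ingredients --- the factorisation $T_\omega=T_{\omega_I}T_{\omega_J}$ with additive length and the Poincar\'e polynomial identity $\sum_{\sigma}t^{l(\sigma)}=[n]_t!$ --- are correct and coincide with what the paper uses.
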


\begin{proof}
We begin by rewriting the left hand side of $\left( \ref{oij proposition}%
\right) $ as 
\begin{eqnarray}
&&\left\langle O_{I,J}E_{\eta }\left( z\right) ,\sum_{\omega \in W_{I\cup
J}}\left( -\frac{1}{t}\right) ^{l\left( \omega _{J}\right) }T_{\omega }\left[
E_{\eta }\left( z\right) \right] \right\rangle _{q,t}  \notag \\
&=&\sum_{\omega \in W_{I\cup J}}\left\langle O_{I,J}E_{\eta }\left( z\right)
,\left( -\frac{1}{t}\right) ^{l\left( \omega _{J}\right) }T_{\omega }\left[
E_{\eta }\left( z\right) \right] \right\rangle _{q,t}.  \label{2nd line}
\end{eqnarray}%
Since $T_{i}^{-1}$ is the adjoint operator of $T_{i},$ that is $\left\langle f,T_{i}g\right\rangle _{q,t}=\left\langle T_{i}^{-1}f,g\right\rangle _{q,t},$ it follows that $\left( \ref{2nd line}\right) $ is equal to 
\begin{equation}
\sum_{\omega \in W_{I\cup J}}\left\langle T_{\omega }^{-1}O_{I,J}E_{\eta
}\left( z\right) ,\left( -\frac{1}{t}\right) ^{l\left( \omega _{J}\right)
}E_{\eta }\left( z\right) \right\rangle _{q,t}.  \label{nearly there}
\end{equation}%
Using $\left( \ref{symmetric oij}\right) $ and $\left( \ref{asymmetric oij}%
\right) $ we rewrite $\left( \ref{nearly there}\right) $ as%
\begin{equation*}
\sum_{\omega \in W_{I\cup J}}\left\langle \frac{\left( -1\right) ^{l\left(
\omega _{J}\right) }}{t^{l\left( \omega _{I}\right) }}O_{I,J}E_{\eta }\left(
z\right) ,\left( -\frac{1}{t}\right) ^{l\left( \omega _{J}\right) }E_{\eta
}\left( z\right) \right\rangle _{q,t}.
\end{equation*}%
By definition of the inner product $\left\langle \cdot ,t^{-1}\right\rangle
_{q,t}=\left\langle t, \cdot \right\rangle _{q,t}$ and therefore we have 
\begin{equation*}
\sum_{\omega \in W_{I\cup J}}\left\langle \frac{t^{l\left( \omega
_{J}\right) }}{t^{l\left( \omega _{I}\right) }}O_{I,J}E_{\eta }\left(
z\right) ,E_{\eta }\left( z\right) \right\rangle _{q,t} =\sum_{\omega \in W_{I\cup J}}\frac{t^{l\left( \omega _{J}\right) }}{%
t^{l\left( \omega _{I}\right) }}\left\langle O_{I,J}E_{\eta }\left( z\right)
,E_{\eta }\left( z\right) \right\rangle _{q,t}.
\end{equation*}%
The final result is obtained using the identity \cite{stanley} $\sum_{\sigma \in S_{n}}q^{l\left( \sigma \right) }=[n]_{q}!.$
\hfill $\square$ \end{proof}
We now present the main theorem of this subsection.
\begin{theorem} \label{innerstuff}
We have 
\begin{equation}
\left\langle S_{\eta ^{\ast }}^{\left( I,J\right) }\left( z\right) ,S_{\eta
^{\ast }}^{\left( I,J\right) }\left( z\right) \right\rangle
_{q,t}=K_{I,J}\left( t\right) \frac{\widehat{c}_{\eta ^{\ast }\eta }}{a_{\eta }\left(
q^{-1},t^{-1}\right) }\left\langle E_{\eta }\left( z\right) ,E_{\eta }\left(
z\right) \right\rangle _{q,t}.  \label{presym inner formula}
\end{equation}
\end{theorem}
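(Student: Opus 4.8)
The plan is to start from the expansion $S_{\eta^*}^{(I,J)}(z) = \sum_{\mu} \widehat{c}_{\eta^*\mu} E_\mu(z)$ of (\ref{2.15}) and reduce the left-hand side of (\ref{presym inner formula}) to an inner product involving $O_{I,J}E_\eta$, so that Lemma \ref{lemma ratio inner} can be applied. First I would pick any $\eta \in W_{I\cup J}(\eta^*)$ and recall from (\ref{2.13}) that $O_{I,J}E_\eta(z) = a_\eta^{(I,J)} S_{\eta^*}^{(I,J)}(z)$. The key observation is that this relation can be inverted: $S_{\eta^*}^{(I,J)}(z) = \frac{1}{a_\eta^{(I,J)}} O_{I,J}E_\eta(z)$, but one must be careful about which variables/parameters the inner product pairs. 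Since $\langle f,g\rangle_{q,t}$ pairs $f(z;q,t)$ with $g(z^{-1};q^{-1},t^{-1})$, when I write $\langle S_{\eta^*}^{(I,J)}(z), S_{\eta^*}^{(I,J)}(z)\rangle_{q,t}$ the second slot naturally involves $S_{\eta^*}^{(I,J)}(z^{-1};q^{-1},t^{-1})$, whose expansion coefficients are the $\widehat{b}$'s of (\ref{149}) rather than the $\widehat{c}$'s; this is why both appear implicitly, and why the factor $a_\eta(q^{-1},t^{-1})$ (the evaluation of the proportionality at inverted parameters) shows up in the denominator.

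Next I would carry out the substitution explicitly. Using the orthogonality $\langle E_\eta, E_\nu\rangle_{q,t} = \delta_{\eta,\nu}\mathcal{N}_\eta$ (and more precisely the fact that $E_\mu(z)$ pairs nontrivially only with $E_\mu(z^{-1};q^{-1},t^{-1})$), I expand one factor of $S_{\eta^*}^{(I,J)}$ as $\frac{1}{a_\eta^{(I,J)}(q^{-1},t^{-1})} O_{I,J}E_\eta(z)$ evaluated at the appropriate parameters and leave the other as the monomial-normalised $S_{\eta^*}^{(I,J)}(z)$; this turns the left-hand side into $\frac{1}{a_\eta(q^{-1},t^{-1})}\langle S_{\eta^*}^{(I,J)}(z), O_{I,J}E_\eta(z)\rangle_{q,t}$. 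Then I substitute $S_{\eta^*}^{(I,J)}(z) = \frac{1}{a_\eta^{(I,J)}} O_{I,J}E_\eta(z)$ into the first slot — or, alternatively and more cleanly, use $O_{I,J}E_\eta = a_\eta^{(I,J)} S_{\eta^*}^{(I,J)}$ to replace $O_{I,J}E_\eta$ and cancel, landing on $\langle O_{I,J}E_\eta(z), O_{I,J}E_\eta(z)\rangle_{q,t}$ up to the scalar $a_\eta^{(I,J)}/a_\eta(q^{-1},t^{-1})$ times $\widehat{c}_{\eta^*\eta}$ (the latter entering when one passes between the monomial normalisation of $S_{\eta^*}^{(I,J)}$ and the $E_\eta$-normalisation). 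At that point Lemma \ref{lemma ratio inner} rewrites $\langle O_{I,J}E_\eta, O_{I,J}E_\eta\rangle_{q,t} = K_{I,J}(t)\langle O_{I,J}E_\eta, E_\eta\rangle_{q,t}$, and a final application of orthogonality of the $E_\mu$ (only the $E_\eta$ term in $O_{I,J}E_\eta$ survives the pairing with $E_\eta$) collapses $\langle O_{I,J}E_\eta, E_\eta\rangle_{q,t}$ to a multiple of $\langle E_\eta, E_\eta\rangle_{q,t}$, yielding (\ref{presym inner formula}) after assembling the constants.

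The bookkeeping of normalisation constants is the main obstacle: one has to track carefully the interplay among (i) the monomial normalisation $\widehat{c}_{\eta^*\eta^*}=1$ of $S_{\eta^*}^{(I,J)}(z)$, (ii) the coefficient $\widehat{c}_{\eta^*\eta}$ of $E_\eta$ in that expansion, (iii) the proportionality $a_\eta^{(I,J)}$ from (\ref{2.13}) in the direct parameters, and (iv) its counterpart $a_\eta(q^{-1},t^{-1})$ arising because the second argument of the inner product lives in the inverted parameters. The cleanest route is to write $S_{\eta^*}^{(I,J)}(z) = \sum_\mu \widehat{c}_{\eta^*\mu}E_\mu(z)$ in one slot and $O_{I,J}E_\eta = a_\eta^{(I,J)}S_{\eta^*}^{(I,J)}$ in the other, use adjointness/orthogonality to isolate the $E_\eta$ contribution, and only at the end identify the coefficient of $E_\eta$ inside $O_{I,J}E_\eta(z)$ with $a_\eta^{(I,J)}\widehat{c}_{\eta^*\eta}$ and simplify $\langle O_{I,J}E_\eta, O_{I,J}E_\eta\rangle$ via the Lemma. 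Everything else is formal manipulation with the constant-term pairing, so once the constants are pinned down the identity falls out.
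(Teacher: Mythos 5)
Your proposal is correct and follows essentially the same route as the paper: convert both slots to $O_{I,J}E_{\eta}$ via (\ref{2.13}) (with the second slot picking up $a_{\eta}^{(I,J)}(q^{-1},t^{-1})$ because the inner product inverts $z,q,t$ there), apply Lemma \ref{lemma ratio inner}, and then extract the $E_{\eta}$ coefficient $a_{\eta}^{(I,J)}\widehat{c}_{\eta^{\ast}\eta}$ using the expansion (\ref{2.15}) and orthogonality. The constants assemble exactly as in the paper's proof, so there is nothing to add.
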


\begin{proof}
Using $\left( \ref{2.13}\right) $ we are able to rewrite $\left( \ref{presym
inner}\right) $ as%
\begin{equation*}
\left\langle \frac{1}{a_{\eta }^{\left( I,J\right) }\left( q,t\right) }%
O_{I,J}E_{\eta }\left( z;q,t\right) ,\frac{1}{a_{\eta }^{\left( I,J\right)
}\left( q,t\right) }O_{I,J}E_{\eta }\left( z;q,t\right) \right\rangle _{q,t}
\end{equation*}%
by the definition of the inner product and Lemma \ref{lemma ratio inner} we
write this as 
\begin{equation}
\frac{K_{I,J}\left( t\right) }{a_{\eta }^{\left( I,J\right) }\left(
q,t\right) a_{\eta }^{\left( I,J\right) }\left( q^{-1},t^{-1}\right) }%
\left\langle O_{I,J}E_{\eta }\left( z;q,t\right) ,E_{\eta }\left(
z;q,t\right) \right\rangle _{q,t}.  \label{prop1}
\end{equation}%
Again using $\left( \ref{2.13}\right) $ we write $\left( \ref{prop1}\right) $%
\begin{equation*}
\frac{K_{I,J}\left( t\right) }{a_{\eta }^{\left( I,J\right) }\left(
q^{-1},t^{-1}\right) }\left\langle S_{\eta ^{\ast }}^{\left( I,J\right)
}\left( z\right) ,E_{\eta }\left( z;q,t\right) \right\rangle _{q,t}.
\end{equation*}%
By $\left( \ref{2.15}\right) $ and the orthogonality of the Macdonald
polynomials we get the desired result.
\hfill $\square$ \end{proof}

\subsection{Special cases of the prescribed symmetry inner product}
Following the theory of Jack polynomials \cite{exchangeterms} we were lead to finding explicit formulas for (\ref{presym inner}) in the special case where $\eta^*$ is of the form
\begin{equation}
\eta^{*}_{(n_{0};n_{1})}:=(0_{n_0},\delta_{n_1})=(0,\ldots,0,n_{1}-1,\ldots,1,0),\label{simple special eta}
\end{equation}
$I=\emptyset$ and $J=J_{n_0,n_1}$. Upon further inspection of this formula it was observed that the result could be used to provide an alternative derivation of a specialisation of a constant term conjecture from \cite{genweight}. 
\bigskip \\
Whilst working with the constant term identities it became apparent that a further conjecture in \cite{genweight} was related to the more general inner product formula where $\eta^{*}$ was given by 
\begin{equation}
\eta^{*}_{(n_{0};N_{p})}:={(0_{n_0};\delta_{N_{p}})}=(0,\ldots,0,n_{1}-1,\ldots,1,0,n_{2}-1,\ldots,1,0,\ldots,n_{p}-1,\ldots,1,0),\label{harder special eta}
\end{equation}
$I=\emptyset$ and $J=J_{n_0,N_p}$.
\bigskip \\
Using the theory developed in the previous section we give an explicit formula for \begin{equation*}
\left\langle S_{\eta ^{\ast }_{(n_{0};N_{p})}}^{\left( I,J\right) }\left( z\right) ,S_{\eta
^{\ast }_{(n_{0};N_{p})}}^{\left( I,J\right) }\left( z\right) \right\rangle _{q,q^{k}},
\end{equation*}
deriving the more specific formula for $\eta ^{\ast }= \eta ^{\ast }_{(n_{0};n_{1})}$ as a corollary.

\begin{theorem}
\label{lemma dp}With $\eta ^{\ast }=\eta _{\left( n_{0};N_{p}\right) }^{\ast
}$ as defined by (\ref{harder special eta}), $I=\emptyset$ and $J=J_{n_0,n_1}$ we have 
\begin{equation}
\left\langle S_{\eta ^{\ast }}^{(I,J)},S_{\eta ^{\ast }}^{(I,J)}\right\rangle _{q,q^{k}}
=\prod_{i=1}^{p}\left[ n_{i}\right] _{q^{k}}!q^{-k\Sigma _{i=1}^{p}\frac{%
n_{i}\left( n_{i}-1\right) }{2}}\frac{[kn+\max \left( N_{P}\right) ]_{q}!\left( 1-q\right) ^{\max \left( N_{P}\right) }}{%
[k]_{q}!^{n}\Pi
_{j=1}^{\max(N_{p})}\left( 1-q^{j}q^{k\left( n-m\left( j\right) \right)
}\right)}
\label{dp inner}
\end{equation}
where $m(j):=\sum_{k=i_j}^{p}(n_k^+-j),\text{ } i_j:=\#\{ n_k^+ \in N_p^+, n_k<j\}+1$, where $N_p^+:=\sigma \left(N_p\right)$ such that $n_1^+\geq \ldots \geq n_p^+$.
\end{theorem}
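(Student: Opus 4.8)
The starting point is Theorem \ref{innerstuff}, which reduces the computation of the left-hand side of (\ref{dp inner}) to three ingredients: the constant $K_{I,J}(t)$, the coefficient $\widehat{c}_{\eta^*\eta}$ from the expansion (\ref{2.15}), and the quantity $a_\eta(q^{-1},t^{-1})$ together with the norm $\mathcal{N}_\eta=\langle E_\eta,E_\eta\rangle_{q,t}$ evaluated at $t=q^k$. Since $I=\emptyset$, the factor $K_{I,J}(t)$ collapses to $\prod_{j=1}^p [\,|\widetilde J_j|\,]_t! = \prod_{i=1}^p [n_i]_{q^k}!$, which already accounts for the first factor on the right of (\ref{dp inner}). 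So the plan is: first fix a convenient representative $\eta \in W_{I\cup J}(\eta^*)$ — the natural choice is $\eta = \eta^*$ itself, since then $\widehat{c}_{\eta^*\eta^*}=1$ by normalisation and the $a_\eta^{(I,J)}$ appearing in $\langle S,S\rangle$ is $a_{\eta^*}^{(I,J)}$. Then I would plug into (\ref{presym inner formula}) and reduce everything to explicit products over diagram nodes.

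The second step is to make $a_{\eta^*}(q^{-1},t^{-1})$ explicit. Here $\eta^* = (0^{n_0},\delta_{N_p})$ with $\kappa$ empty, so Proposition \ref{proposition 2.2} applies with $\eta = \eta^{(+,+)} = \eta^*$; the quantities $\eta^{(-,+)}$, $\eta^{(0,+)}=\eta^*$, $\eta^{(-,-)}$ are all explicit rearrangements, and $M_{I,\eta}=1$ because $I=\emptyset$. The $d$ and $d'$ products for these particular compositions are built from staircase blocks $\delta_{n_i}$ sitting in a sea of zeros, so the arm/leg data is combinatorially transparent: within a block the leg lengths grow like the number of equal-or-larger parts among earlier blocks, which is exactly what feeds the $m(j)$ statistic. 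Carrying out these products at $t=q^k$ and inverting $q,t$ is the bulk of the bookkeeping; I expect the $q^{-k\Sigma n_i(n_i-1)/2}$ prefactor and the denominator $\prod_{j=1}^{\max N_p}(1-q^j q^{k(n-m(j))})$ to emerge precisely from the $d_{\eta^*}'d_{\eta^{(-,+)}}'d_{\eta^{(-,+)}}/(d_{\eta^{(0,+)}}'d_{\eta^{(0,+)}}d_{\eta^{(-,-)}}')$ ratio after cancellation.

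The third step is the norm $\mathcal{N}_{\eta^*}$ at $t=q^k$. Using (\ref{12today}), $\mathcal{N}_{\eta^*} = \frac{d'_{\eta^*}e_{\eta^*}}{d_{\eta^*}e'_{\eta^*}}\langle 1,1\rangle_{q,q^k}$, and $\langle 1,1\rangle_{q,q^k} = [nk]_q!/[k]_q!^n$ by (\ref{13a}), which supplies the $[k]_q!^{-n}$ in the answer and part of the $q$-factorial in the numerator. The ratio $d'e/(de')$ for a staircase-in-zeros composition is again a manageable product over nodes; the colength data $l'_{\eta^*}$ and arm colength $a'_{\eta^*}$ governing $e_{\eta^*},e'_{\eta^*}$ is where the "$\max(N_p)$'' and the shift from $[nk]_q!$ to $[kn+\max(N_p)]_q!$ come from, together with the $(1-q)^{\max(N_p)}$ coming from converting between the Pochhammer-style products and $q$-factorials. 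Finally I would assemble $K_{I,J}(t)\cdot \widehat{c}_{\eta^*\eta^*}\cdot \mathcal{N}_{\eta^*}/a_{\eta^*}^{(I,J)}(q^{-1},q^{-k})$ and check the cancellations collapse to (\ref{dp inner}); the corollary for $\eta^*=\eta^*_{(n_0;n_1)}$ is then the $p=1$ specialisation, where $m(j)=n_1-j$ and the formula simplifies. The main obstacle is the combinatorial control of the leg-length statistics across several staircase blocks of different sizes — precisely tracking how a part of size $j$ in the $i$-th block sees larger parts in the other blocks, which is what the definition of $m(j)$ via the sorted $N_p^+$ encodes; getting that index bookkeeping exactly right (rather than off by one) is the delicate point, and everything else is controlled $q$-series manipulation.
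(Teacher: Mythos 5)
Your plan follows the paper's proof essentially step for step: apply Theorem \ref{innerstuff} with the representative $\eta=\eta^*$ (so $\widehat{c}_{\eta^*\eta^*}=1$), evaluate $K_{\emptyset,J}(q^k)=\prod_i[n_i]_{q^k}!$, reduce $a_{\eta^*}(q^{-1},q^{-k})$ via Proposition \ref{proposition 2.2} to a ratio of $d'$-type products, insert $\mathcal{N}_{\eta^*}$ from (\ref{12today}) and (\ref{13a}), and extract the $q^{-k\Sigma n_i(n_i-1)/2}$ factor and the $m(j)$-denominator from the arm/leg combinatorics of the staircase blocks. This is the same route as the paper (which handles the bookkeeping via $d'_{\eta^{*(-,-)}}=d_{\eta^*}$ and Sahi's invariance $e_\eta=e_{s_i\eta}$ to reorder to $\eta^{*+}$), so the proposal is correct in approach.
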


\begin{proof}
By (\ref{presym inner formula}) our task is to simplify
\begin{equation*}
K_{\emptyset,J}( q^{k}) \frac{\widehat{c}_{\eta ^{\ast }\eta }}{a_{\eta }\left(
q^{-1},q^{-k}\right) }\left\langle E_{\eta }\left( z\right) ,E_{\eta }\left(
z\right) \right\rangle _{q,q^{k}}.
\end{equation*}
For simplicity we take $\eta =\eta ^{\ast }$, and hence $\widehat{c}_{\eta ^{\ast }\eta ^{\ast }}=1.$ The $p$ disjoint sets in $J$ indicate
\begin{equation*}
K_{I,J}(q^k)=\prod_{i=1}^{p}\left[ n_{i}\right] _{q^{k}}!
\end{equation*}
and with $I=\emptyset$, $a_{\eta ^{\ast }}\left( q^{-1},q^{-k}\right)$ simplifies to 
\begin{equation*}
a_{\eta ^{\ast }}( q^{-1},q^{-k}) =\frac{d_{\eta ^{\ast
}}^{\prime }\left( q^{-1},q^{-k}\right) }{d_{\eta ^{\ast \left( -,-\right)
}}^{\prime }\left( q^{-1},q^{-k}\right) }.
\end{equation*}%
Lastly, by (\ref{12today}) and (\ref{13a}), we have 
\begin{equation*}
\left\langle E_{\eta ^{\ast }}( q,q^{k}) ,E_{\eta ^{\ast }}(
q,q^{k}) \right\rangle _{q,q^{k}}=\frac{d_{\eta^{*} }^{\prime }\left(
q,q^{k}\right) e_{\eta^{*} }\left( q,q^{k}\right) }{d_{\eta^{*} }\left( q,q^{k}\right) e_{\eta^{*}
}^{\prime }\left( q,q^{k}\right) }\frac{[nk]_{q}!}{[k]_{q}!^{n}}.
\end{equation*}%
Putting this all together allows us to rewrite (\ref{dp inner}) as
\begin{equation}
\prod_{i=1}^{p}\left[ n_{i}\right] _{q^{k}}!
\frac{d_{ \eta ^{\ast(-,-)}}^{\prime
}\left( q^{-1},q^{-k}\right) d_{\eta ^{\ast }}^{\prime }\left( q,q^{k}\right)
e_{\eta ^{\ast }}\left( q,q^{k}\right) }
{d_{\eta ^{\ast }}^{\prime }\left(
q^{-1},q^{-k}\right) d_{\eta ^{\ast }}\left( q,q^{k}\right)
e_{\eta ^{\ast }}^{\prime }\left( q,q^{k}\right) }\frac{[nk]_{q}!}{[k]_{q}!^{n}}
.  \label{big mess2}
\end{equation}%
We begin by simplifying 
\begin{equation}
\frac{d_{ \eta ^{\ast(-,-)}}^{\prime
}\left( q^{-1},q^{-k}\right) d_{\eta ^{\ast }}^{\prime }\left( q,q^{k}\right) }
{d_{\eta ^{\ast }}^{\prime }\left(
q^{-1},q^{-k}\right) d_{\eta ^{\ast }}\left( q,q^{k}\right) } \label{the ds}
\end{equation}
In comparison with $\eta^{*}$, $\eta ^{\ast \left( -,-\right)
}$ has one additional empty box above
each row. Hence the leg length of each $s\in $diag$(\eta ^{\ast \left( -,-\right)
}) $ is one greater than it's corresponding box in diag$%
\left( \eta ^{\ast }\right) .$ It follows from this and the definition of $%
d_{\eta }$ and $d_{\eta }^{\prime }$ that $d_{\eta^{\ast(-,-)}}=d_{\eta ^{\ast }}.$ Hence we can rewrite (\ref{the ds}) as
\begin{equation}
\frac{d_{\eta ^{\ast }}\left( q^{-1},q^{-k}\right) }{d_{\eta ^{\ast
}}^{\prime }\left( q^{-1},q^{-k}\right) }\frac{d_{\eta ^{\ast }}^{\prime
}\left( q,q^{k}\right) }{d_{\eta ^{\ast }}\left( q,q^{k}\right) }=q^{k\Sigma_{i=1}^{p}\frac{n_{i}(n_{i}-1)}{2}},\label{the ds simplified}
\end{equation}
where the equality follows upon use of the definition of $d^*$, given above (\ref{12today}), and the simple identity 
\begin{equation*}
\frac{1-x}{1-x^{-1}}=-x.  
\end{equation*}%
We now consider the simplification of the ratio $e$/$e^{\prime }$. Explicitly we have 
\begin{equation}
\frac{e_{\eta ^{\ast }}\left( q,q^{k}\right) }{e_{\eta ^{\ast }}^{\prime }\left(
q,q^{k}\right) }=\frac{\Pi _{s}\left( 1-q^{a_{\eta^* }^{\prime }\left( s\right)
+1}q^{k(n-l_{\eta^* }^{\prime }\left( s\right) )}\right) }{\Pi _{r}\left(
1-q^{a_{\eta^* }^{\prime }\left( r\right) +1}q^{k(n-1-l_{\eta^* }^{\prime }\left(
r\right)) }\right) }.  \label{try again}
\end{equation}%
In \cite{sahi} Sahi showed $e_\eta=e_{s_i\eta}$ and $e'_\eta=e'_{s_i\eta}$, hence, the products in (\ref{try again}) are independent of the row order. 
For simplicity we take $\eta^*=\eta^{*+}$. In such a composition $n-l'_{\eta^{*+}}((i,j))=n-1-l'_{\eta^{*+}}((i+1,j))$ and consequently most terms in (\ref{try again}) cancel. The terms unique to the numerator correspond to the boxes in the top row of $\eta^{*+}$, and therefore the terms remaining in the denominator will correspond to the bottom box of each column. The leg colengths of the latter set are given by $m(j)-1$, (for $j=1\ldots \max({N_p})-1$) . 
Hence the ratio of the $e^{\prime }$s in our
expansion is given by%
\begin{eqnarray*}
\frac{e_{\eta ^{\ast }}\left( q,q^{k}\right) }{e_{\eta ^{\ast }}^{\prime
}\left( q,q^{k}\right) } &=&\frac{\left( 1-q^{kn+1}\right) \left(
1-q^{kn+2}\right) ...\left( 1-q^{kn+ \max({N_p}) -1}\right) }{%
\Pi _{j=1}^{ \max({N_p})-1}\left( 1-q^{j}q^{k\left( n-m\left( j\right) \right)
}\right) } \notag \\
&=&\frac{\left( 1-q\right) ^{\max({N_p}) }[kn+ \max({N_p}) ]_{q}!}{\Pi
_{j=1}^{ \max({N_p})-1}\left( 1-q^{j}q^{k\left( n-m\left( j\right) \right)
}\right) \left( 1-q^{ \max({N_p}) }q^{kn}\right) \lbrack kn]_{q}!} \label{nearly} \\
&=&\frac{\left( 1-q\right) ^{ \max({N_p})}[kn+ \max({N_p}) ]_{q}!}{\Pi
_{j=1}^{ \max({N_p})}\left( 1-q^{j}q^{k\left( n-m\left( j\right) \right)
}\right) \lbrack kn]_{q}!}. \label{nearly2}
\end{eqnarray*}
The last simplification was made by noting $m(\max({N_p}))=0$. Substituting each simplification into $\left( \ref%
{big mess2}\right) $ gives the required result. 
\hfill $\square$ \end{proof}
We now give the analogous result for $\eta^{*}=\eta^{*}_{(n_{0};n_{1})}$.
\begin{corollary}
\label{interesting eta}With $\eta _{\left( n_{0};n_{1}\right) }^{\ast
}$ given by (\ref{simple special eta}) we have
\begin{equation}
\left\langle S_{\eta _{\left( n_{0};n_{1}\right) }^{\ast }}^{\left(
I,J\right) }\left( z\right) ,S_{\eta _{\left( n_{0};n_{1}\right) }^{\ast
}}^{\left( I,J\right) }\left( z\right) \right\rangle _{q,q^{k}}=\frac{%
[n_{1}]_{q^{k}}![n_{1}+nk]_{q}!\left( 1-q\right) ^{n_{1}}}{%
([k]_{q}!)^{n}\left( q^{n_{1}\left( k+1\right) +n_{0}k};q^{-\left(
k+1\right) }\right) _{n_{1}}q^{\frac{k\left( n_{1}-1\right) n_{1}}{2}}}.
\label{the answer}
\end{equation}
\end{corollary}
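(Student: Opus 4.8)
The plan is to derive (\ref{the answer}) as the $p=1$ specialisation of Theorem \ref{lemma dp}. Setting $p=1$ we have $N_p=\{n_1\}$, hence $\max(N_p)=n_1$ and the reordered list $N_p^+$ also equals $\{n_1\}$. Under this specialisation the product $\prod_{i=1}^p[n_i]_{q^k}!$ in (\ref{dp inner}) collapses to $[n_1]_{q^k}!$, the exponent $\Sigma_{i=1}^p n_i(n_i-1)/2$ becomes $n_1(n_1-1)/2$, and $[kn+\max(N_p)]_q!\,(1-q)^{\max(N_p)}$ becomes $[nk+n_1]_q!\,(1-q)^{n_1}$. Since $q^{-kn_1(n_1-1)/2}=1/q^{k(n_1-1)n_1/2}$, this factor simply moves into the denominator. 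Thus the only substantive work is to identify the finite product $\prod_{j=1}^{\max(N_p)}(1-q^jq^{k(n-m(j))})$ with the shifted Pochhammer symbol $(q^{n_1(k+1)+n_0k};q^{-(k+1)})_{n_1}$ appearing in (\ref{the answer}).

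To this end I would first evaluate $m(j)$ for $1\le j\le n_1$. Because $p=1$, the set $N_p^+$ has the single element $n_1$, and for every $j\in\{1,\dots,n_1\}$ we have $n_1\ge j$, so the count $\#\{n_k^+\in N_p^+:n_k<j\}$ is zero; hence $i_j=1$ and therefore $m(j)=\Sigma_{k=1}^1(n_k^+-j)=n_1-j$. Using $n=n_0+n_1$ this gives $n-m(j)=n_0+j$, so
\begin{equation*}
\prod_{j=1}^{n_1}\bigl(1-q^jq^{k(n-m(j))}\bigr)=\prod_{j=1}^{n_1}\bigl(1-q^jq^{k(n_0+j)}\bigr)=\prod_{j=1}^{n_1}\bigl(1-q^{j(k+1)+kn_0}\bigr).
\end{equation*}

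Finally I would reindex this product by $j=n_1-i$, $i=0,\dots,n_1-1$, which turns it into
\begin{equation*}
\prod_{i=0}^{n_1-1}\bigl(1-q^{n_1(k+1)+n_0k}\,q^{-(k+1)i}\bigr)=\bigl(q^{n_1(k+1)+n_0k};q^{-(k+1)}\bigr)_{n_1},
\end{equation*}
which is precisely the factor appearing in the denominator of (\ref{the answer}). Substituting all of these specialisations into (\ref{dp inner}) then yields the corollary. There is no serious obstacle here beyond Theorem \ref{lemma dp} itself; the only point needing care is unwinding the definitions of $i_j$ and $m(j)$ for $j=1,\dots,n_1$ so as to confirm $m(j)=n_1-j$, after which the argument reduces to elementary reindexing of finite $q$-products.
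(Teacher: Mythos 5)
Your proposal is correct and follows exactly the paper's route: the paper's proof likewise obtains the corollary by setting $p=1$ in Theorem \ref{lemma dp} and simplifying via (\ref{9a}). Your explicit verification that $m(j)=n_1-j$, hence $n-m(j)=n_0+j$, and the reindexing $j=n_1-i$ to produce $\left(q^{n_1(k+1)+n_0k};q^{-(k+1)}\right)_{n_1}$ correctly fills in the "simplifying" step the paper leaves implicit.
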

\begin{proof}
The result follows immediately from Theorem \ref{lemma dp} by substituting $p=1$ into the right hand side of (\ref{dp inner}) and simplifying using (\ref{9a}). \hfill $\square$ \end{proof}

\subsection{The constant term identities}
We now present the two conjectures put forward by Baker and Forrester \cite{genweight}. We conclude the paper by showing how our results can be used to prove the special case of these conjectures when $a=b=0$.

\begin{conjecture}\label{forresters conjecture} 
\cite[Conj 2.1]{genweight}  We have
\begin{eqnarray*}
D_{1}\left( n_{0};n_{1};a,b;q\right)&=&\text{\rm{CT}}\bigg(\prod_{n_{0}+1\leq i<j\leq n}(
z_{i}-q^{k+1}z_{j}) ( z_i^{-1}-q^{k}z_j^{-1})\prod_{i<j}\left( 
\frac{z_{i}}{z_{j}};q\right) _{k}\left( q\frac{z_{j}}{z_{i}};q\right) _{k}
  \notag \\
&& \times \prod_{i=1}^{n}\left(z_{i};q\right) _{a}\left(\frac{q}{z_{i}};q\right) _{b}\bigg)\notag \\
&=&\frac{\Gamma _{q^{k+1}}\left( n_{1}+1\right) }{\left( \Gamma _{q}\left(
1+k\right) \right) ^{n}}\prod_{l=0}^{n_{0}-1}\frac{\Gamma
_{q}\left( a+b+1+kl\right) \Gamma _{q}\left( 1+k(l+1\right) )}{\Gamma
_{q}\left( a+1+kl\right) \Gamma _{q}\left( b+1+kl\right) }  \notag \\
&&\times \prod_{j=0}^{n_{1}-1}\frac{\Gamma _{q}\left( \left( k+1\right)
j+a+b+kn_{0}+1\right) \Gamma _{q}\left( \left( k+1\right)
(j+1)+kn_{0}\right) }{\Gamma _{q}\left( \left( k+1\right)
j+a+kn_{0}+1\right) \Gamma _{q}\left( \left( k+1\right) j+b+kn_{0}+1\right) }.%
\label{line2}
\end{eqnarray*}%
\end{conjecture}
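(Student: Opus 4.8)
The plan is to reduce the two-parameter identity to the case $a=b=0$, which is already established in this paper (Corollary~\ref{interesting eta}, via Theorem~\ref{lemma dp}), by means of an Aomoto-style $q$-difference recurrence in the boundary parameters $a$ and $b$.

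First I would rewrite the integrand in a form that exposes its Morris structure. Writing $t=q^{k}$ and using, for each pair $i<j$ in the last block $B:=\{n_{0}+1,\ldots,n\}$, the factorisation $(z_{i}-q^{k+1}z_{j})(z_{i}^{-1}-q^{k}z_{j}^{-1})=(1-q^{k+1}z_{j}/z_{i})(1-q^{k}z_{i}/z_{j})$, these extra factors combine with the corresponding Morris factors to promote the coupling within $B$ from $k$ to $k+1$:
\[
D_{1}(n_{0};n_{1};a,b;q)=\mathrm{CT}\!\left(\prod_{i=1}^{n}(z_{i};q)_{a}\left(\tfrac{q}{z_{i}};q\right)_{b}\prod_{\substack{i<j\\ \{i,j\}\not\subseteq B}}\left(\tfrac{z_{i}}{z_{j}};q\right)_{k}\left(q\tfrac{z_{j}}{z_{i}};q\right)_{k}\prod_{\substack{i<j\\ i,j\in B}}\left(\tfrac{z_{i}}{z_{j}};q\right)_{k+1}\left(q\tfrac{z_{j}}{z_{i}};q\right)_{k+1}\right).
\]
This presents $D_{1}$ as a two-block $q$-Morris constant term, the last block carrying the enhanced coupling $k+1$, together with uniform $BC$-type boundary weights governed by $a$ and $b$. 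I would also record the symmetry $z_{i}\mapsto q/z_{i}$ (for all $i$), under which the weight above is invariant while $(z_{i};q)_{a}\leftrightarrow(q/z_{i};q)_{b}$; hence $D_{1}(n_{0};n_{1};a,b;q)=D_{1}(n_{0};n_{1};b,a;q)$, so it suffices to treat the recurrence in $a$.

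Second I would set up the recurrence. For any Laurent polynomial $g$ one has $\mathrm{CT}\big((T_{q,z_{i}}-1)g\big)=0$, where $T_{q,z_{i}}$ is the $q$-shift $z_{i}\mapsto qz_{i}$, since the shift multiplies the coefficient of $z_{i}^{m}$ by $q^{m}$ and fixes the constant term. Following Aomoto's proof of the $q$-Morris identity I would apply this with $g$ equal to the integrand above, with one boundary factor $(z_{i};q)_{a}$ lowered to $(z_{i};q)_{a-1}$ and multiplied by an appropriate degree-one factor, chosen so that $(T_{q,z_{i}}-1)g$ telescopes. Summing over $i$ in each block and exploiting the permutation symmetry of the weight within blocks should collapse the resulting identities into a single first-order recurrence expressing $D_{1}(n_{0};n_{1};a,b;q)$ through $D_{1}(n_{0};n_{1};a-1,b;q)$; iterating this and its $a\leftrightarrow b$ mirror lowers both parameters to zero, where the value is supplied by (\ref{the answer}). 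Finally, using $\Gamma_{q}(x+1)=[x]_{q}\,\Gamma_{q}(x)$ the conjectured right-hand side at $a$ divided by its value at $a-1$ telescopes to a finite product of $q$-numbers, and one checks that this matches the ratio produced by the constant-term recurrence; together with the base case this proves the identity by induction on $a+b$.

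The main obstacle is closing the recurrence in the presence of the two-block factor. In Aomoto's single-block argument the weight is fully $S_{n}$-symmetric, so the shift of a boundary factor produces only terms that reassemble into the same constant term with $a$ lowered. Here the enhanced factors $(z_{i}/z_{j};q)_{k+1}(qz_{j}/z_{i};q)_{k+1}$ transform nontrivially when a single last-block variable is $q$-shifted, and a shifted boundary factor interacts with the cross-block coupling at level $k$; ensuring that every auxiliary constant term so generated can be re-expressed as a $D_{1}$ with shifted parameters --- that nothing genuinely new escapes the recurrence --- is the delicate point, and is where I expect the real work to lie. A conceptually cleaner but technically heavier alternative would bypass the recurrence entirely: the weight $\prod_{i}(z_{i};q)_{a}(q/z_{i};q)_{b}\,W(z)$ is a specialisation of the Koornwinder ($BC_{n}$) weight, so computing the norm of a prescribed-symmetry Koornwinder polynomial would yield the general $D_{1}$ directly, in exact analogy with the derivation of (\ref{the answer}); this would first require developing the prescribed-symmetry theory in the $BC$ setting, which is why I would attempt the Aomoto recurrence first.
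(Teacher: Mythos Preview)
The paper does not prove this statement: it is recorded there as a conjecture, with a survey of the known partial cases ($a=k$, small $n_{1}$, $n\le 5$, and $a=b=0$). What the paper itself contributes is a new derivation of the $a=b=0$ case only (Theorem~\ref{big ct theorem}): using Theorem~\ref{special eta thm} to identify $S_{\eta^{\ast}_{(n_{0};n_{1})}}$ with $\Delta_{t}^{n_{0},\{n_{1}\}}$, the inner product $\langle S_{\eta^{\ast}},S_{\eta^{\ast}}\rangle_{q,q^{k}}$ becomes a constant term of exactly the $D_{1}$ shape but with the deformation parameter $q^{-k}$ in place of $q^{k+1}$; Lemma~\ref{kind of kadells} passes between the two, and Corollary~\ref{interesting eta} supplies the value. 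So your base case is indeed what the paper proves, but note that the relevant result is Theorem~\ref{big ct theorem}, not Corollary~\ref{interesting eta} alone --- the conversion lemma is essential, since $\langle S_{\eta^{\ast}},S_{\eta^{\ast}}\rangle$ is not literally $D_{1}(n_{0};n_{1};0,0;q)$.

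Your proposal goes beyond the paper by attempting the full conjecture via an Aomoto-style first-order recurrence in $a$ (and then $b$). The rewriting of the extra factors to promote the internal coupling in the last block from $k$ to $k+1$ is correct and is a useful observation. However, the step you yourself flag as ``the real work'' is a genuine gap, not a routine detail. In the classical $q$-Morris argument the integrand is $S_{n}$-symmetric and the auxiliary constant terms generated by $(T_{q,z_{i}}-1)g$ are all of the same family; here the two-block structure (coupling $k$ between blocks, $k+1$ inside $B$) means the $q$-shift of a boundary factor produces cross-terms that do not obviously reassemble into $D_{1}$ with shifted parameters. This is precisely the obstruction that has confined all existing proofs to special cases: the results you would need to close the recurrence in the general two-block setting are not available, and if they were the conjecture would already be settled. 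The $a\leftrightarrow b$ symmetry you invoke is correct at the level of the conjectured right-hand side, but your justification via $z_{i}\mapsto q/z_{i}$ is not quite right as stated: under that map the bulk weight $\prod_{i<j}(z_{i}/z_{j};q)_{k}(qz_{j}/z_{i};q)_{k}$ is not literally invariant, so the argument needs an additional relabelling step. In summary, the plan is reasonable but is a research programme rather than a proof; as it stands the proposal does not close the main gap, and the paper makes no claim to have done so either.
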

\bigskip %
The $n_0=0$ case of Conjecture \ref{forresters conjecture} reduces to the $q$-Morris constant term identity, well known in the theory of Selberg integrals (see, e.g., \cite{selberg}). Within \cite{genweight} Baker and Forrester were able to prove Conjecture %
\ref{forresters conjecture} for the cases $a=k$ and $n_{1}=2.$ In a related work \cite{baker} they also proved the case where $a=b=0$. In both cases a combinatorial identity of Bressaud and Goulden \cite{bressoud} is used. Following
this Hamada \cite{hamada} confirmed the general cases $n_{1}=2$ and $n_{1}=3
$ using a $q$-integration formula of Macdonald polynomials and Gessel \cite{gessel} showed the conjecture to be true for $n_{1}=2,n-1,3$ and also for
the cases where $n\leq 5.$ 
\bigskip 
\begin{conjecture}   \label{conj2}
\cite[Conj 2.2]{genweight} Define $D_{p}\left( n_{1},...,n_{p};n_{0};a,b,k;q\right)$ by
\begin{eqnarray*}
D_{p}\left( n_{1},...,n_{p};n_{0};a,b,k;q\right)&:=& \text{\rm{CT}}\bigg(\prod_{\alpha =1}^{p}\prod_{\substack{ \min \left( \widetilde{J}
_{\alpha }\right) \leq i< \\ j\leq \max \left( \widetilde{J}_{\alpha
}\right) }}(
z_{i}-q^{k+1}z_{j}) ( z_i^{-1}-q^{k}z_j^{-1}) \notag \\ 
&& \times \prod_{1\leq i<j\leq N}\left( \frac{z_{i}}{z_{j}};q\right)
_{k}\left( q\frac{z_{j}}{z_{i}};q\right) _{k}\prod_{i=1}^{n}\left( z_{i};q\right) _{a}\left( \frac{q}{z_{i}}%
;q\right) _{b} \bigg),  \label{Dp}
\end{eqnarray*}%
where $\widetilde{J}_{\alpha }$ is given by (\ref{JnoNp}). Then for $n_{p}>n_{j}$ $\left( j=1,...,p-1\right)$ we have
\begin{eqnarray}
\frac{D_{p}\left( n_{1},...,n_{p-1},n_{p}+1;n_{0};0,0,k;q\right) }{%
D_{p}\left( n_{1},...,n_{p-1},n_{p};n_{0};0,0,k;q\right) } &=&\frac{%
[n_{p}+1]_{q^{k+1}}}{[k]_{q}!} \notag \\
&&\times \frac{\Gamma_q( \left( k+1\right) \left( n_{p}+1\right) +k\Sigma
_{j=0}^{p-1}n_{j})}{\Gamma_q(\left( k+1\right) n_{p}+k\Sigma
_{j=0}^{p-1}n_{j})}  \notag \\
&=&\frac{[n_{p}+1]_{q^{k+1}}}{[k]_{q}!}\frac{[k\left( n+1\right)
+n_{p}]_{q}!}{[kn+n_{p}]_{q}!} \label{final corollary}
\end{eqnarray}%
\begin{eqnarray*}
\end{eqnarray*}
\end{conjecture}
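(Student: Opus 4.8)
The plan is to identify $D_p$ in the case $a=b=0$ with an explicit elementary multiple of the prescribed-symmetry squared norm evaluated in Theorem \ref{lemma dp}, and then obtain (\ref{final corollary}) by cancelling $q$-factorials.

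\textbf{Step 1: reduction to an inner product.} With $a=b=0$ the last product in the definition of $D_p$ is absent. By the argument proving Theorem \ref{special eta thm}, applied with $I=\emptyset$, the polynomial $S_{\eta^*_{(n_0;N_p)}}^{(\emptyset,J^{n_0,N_p})}(z;q,q^k)$ is the $q^k$-$J^{n_0,N_p}$-antisymmetric polynomial with leading term $z^{(0_{n_0},\delta_{N_p})}$, namely $\Delta^{n_0,N_p}_{q^k}(z)=\prod_{\alpha}\prod_{\min\widetilde J_\alpha\le i<j\le\max\widetilde J_\alpha}(z_i-q^{-k}z_j)$. From the definition $\langle f,g\rangle_{q,q^k}=\mathrm{CT}\big(f(z)\,g(z^{-1};q^{-1},q^{-k})\,W(z;q,q^k)\big)$, taking $g=S_{\eta^*_{(n_0;N_p)}}^{(\emptyset,J^{n_0,N_p})}$ we have $g(z^{-1};q^{-1},q^{-k})=\prod_{\alpha}\prod_{i<j}(z_i^{-1}-q^{k}z_j^{-1})$, so comparison with $D_p$ gives directly
\begin{equation*}
D_p(n_1,\ldots,n_p;n_0;0,0,k;q)=\Big\langle\,\Delta'(z)\,,\,S_{\eta^*_{(n_0;N_p)}}^{(\emptyset,J^{n_0,N_p})}\,\Big\rangle_{q,q^k},\qquad \Delta'(z):=\prod_{\alpha=1}^{p}\ \prod_{\min\widetilde J_\alpha\le i<j\le\max\widetilde J_\alpha}(z_i-q^{k+1}z_j).
\end{equation*}
(Equivalently, absorbing $(1-q^kz_i/z_j)$ into $(z_i/z_j;q)_k$ and $(1-q^{k+1}z_j/z_i)$ into $(qz_j/z_i;q)_k$ for each block pair exhibits $D_p$ as the constant term of $W(z;q,q^k)$ with the parameter raised from $q^k$ to $q^{k+1}$ within each block $\widetilde J_\alpha$; in particular $D_1(m;0;0,0,k;q)=\langle 1,1\rangle_{q,q^{k+1}}=[m(k+1)]_q!/[k+1]_q!^{\,m}$ by (\ref{13a}).)

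\textbf{Step 2: the elementary multiple.} The polynomials $\Delta'$ and $S_{\eta^*_{(n_0;N_p)}}^{(\emptyset,J^{n_0,N_p})}$ share the leading term $z^{(0_{n_0},\delta_{N_p})}$, but $\Delta'$ is $q^{k+1}$-antisymmetric — not $q^k$-antisymmetric — with respect to $J^{n_0,N_p}$. Since the Macdonald polynomials with prescribed symmetry are mutually orthogonal in $\langle\cdot,\cdot\rangle_{q,q^k}$, the pairing of Step 1 equals $C_p\,\langle S_{\eta^*_{(n_0;N_p)}}^{(\emptyset,J^{n_0,N_p})},S_{\eta^*_{(n_0;N_p)}}^{(\emptyset,J^{n_0,N_p})}\rangle_{q,q^k}$, where $C_p$ is the coefficient of $S_{\eta^*_{(n_0;N_p)}}^{(\emptyset,J^{n_0,N_p})}$ obtained when the orthogonal projection of $\Delta'$ onto the $q^k$-$J^{n_0,N_p}$-antisymmetric subspace is expanded in the prescribed-symmetry basis. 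Because $\Delta'$ and this projection (built from the $T_i$ and their adjoints $T_i^{-1}$, $i\in J^{n_0,N_p}$) respect the splitting into the disjoint blocks $\widetilde J_1,\ldots,\widetilde J_p$, one gets $C_p=\prod_{\alpha=1}^{p}c_{n_\alpha}$ with $c_m$ the one-block constant; comparing the $n_0=0$, $p=1$ case of Step 1 with the same case of Theorem \ref{lemma dp} (and using $[(k+1)m]_q=[k+1]_q\,[m]_{q^{k+1}}$) gives $c_m=q^{k\binom{m}{2}}[m]_{q^{k+1}}!/[m]_{q^k}!$. Substituting the value of $\langle S_{\eta^*_{(n_0;N_p)}}^{(\emptyset,J^{n_0,N_p})},S_{\eta^*_{(n_0;N_p)}}^{(\emptyset,J^{n_0,N_p})}\rangle_{q,q^k}$ from Theorem \ref{lemma dp}, the factors $\prod_i[n_i]_{q^k}!$ and all powers of $q$ cancel, leaving
\begin{equation*}
D_p(n_1,\ldots,n_p;n_0;0,0,k;q)=\prod_{i=1}^{p}[n_i]_{q^{k+1}}!\ \frac{[kn+\max(N_p)]_q!\,(1-q)^{\max(N_p)}}{[k]_q!^{\,n}\ \prod_{j=1}^{\max(N_p)}\big(1-q^{j}q^{k(n-m(j))}\big)}.
\end{equation*}

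\textbf{Step 3: the ratio, and the obstacle.} Under the hypothesis $n_p>n_j$ $(j<p)$, incrementing $n_p\mapsto n_p+1$ sends $n\mapsto n+1$ and $\max(N_p)\mapsto n_p+1$, leaves $m(j)$ unchanged for $j\le n_p$ (the ordering $N_p^+$ is undisturbed), and appends one denominator factor $1-q^{\,n_p+1}q^{k(n+1)}$ since $m(n_p+1)=0$. The quotient of the two values of $D_p$ is thus $[n_p+1]_{q^{k+1}}\cdot[k]_q!^{-1}\cdot(1-q)\cdot\big(1-q^{\,n_p+1+k(n+1)}\big)^{-1}\cdot[k(n+1)+n_p+1]_q!/[kn+n_p]_q!$; writing $[k(n+1)+n_p+1]_q!=[k(n+1)+n_p]_q!\,[k(n+1)+n_p+1]_q$ and $(1-q)[k(n+1)+n_p+1]_q=1-q^{\,k(n+1)+n_p+1}$, this collapses to $\dfrac{[n_p+1]_{q^{k+1}}}{[k]_q!}\,\dfrac{[k(n+1)+n_p]_q!}{[kn+n_p]_q!}$, i.e. (\ref{final corollary}). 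The main obstacle is Step 2: showing that the $q^{k+1}$-antisymmetric Vandermonde $\Delta'$ contributes to $\langle\Delta',S_{\eta^*_{(n_0;N_p)}}^{(\emptyset,J^{n_0,N_p})}\rangle_{q,q^k}$ exactly the scalar $C_p$, and that $C_p$ is multiplicative over the blocks — equivalently, controlling the orthogonal projection of $\Delta'$ onto the $q^k$-antisymmetric subspace. A lesser point is simply the bookkeeping that the hypothesis $n_p>n_j$ is precisely what forces $\max(N_p)$ to increase (and keeps all $m(j)$, $j\le n_p$, fixed) under the increment.
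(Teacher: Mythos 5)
Your overall strategy coincides with the paper's: Theorem \ref{corollary dp} likewise identifies $D_p(N_p;n_0;0,0,k;q)$ with an explicit block-multiplicative multiple of $\langle S_{\eta^*},S_{\eta^*}\rangle_{q,q^k}$, substitutes Theorem \ref{lemma dp}, and obtains (\ref{final corollary}) by the same cancellation of $q$-factorials; your closed form for $D_p$ agrees with (\ref{dp formula}), and your Step 1 identification of $D_p$ as $\langle \Delta',S_{\eta^*}\rangle_{q,q^k}$ is correct. However, the step you yourself flag as ``the main obstacle'' is a genuine gap, and it is the entire substance of the argument. The paper closes it not by orthogonal projection but by the elementary antisymmetrization identity of Lemma \ref{newer kadell}: writing the integrand as $\prod(z_i-az_j)\,h(z)$, one checks that $h(z)$ --- the product of the $(z_i^{-1}-q^{k}z_j^{-1})$ factors with $W(z;q,q^k)$ --- is \emph{ordinarily} antisymmetric in the variables of each block $\widetilde J_\alpha$ (the $q^k$'s cancel in pairs under a transposition), so the constant term is unchanged when $\prod(z_i-az_j)$ is replaced by its signed average over the block, which equals $\frac{[m]_a!}{m!}\prod(z_i-z_j)$ for a block of $m$ variables by $\prod_{i<j}(z_i-az_j)=\sum_{\sigma}(-a)^{l(\sigma)}z^{\sigma\delta}$. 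Applying this once with $a=q^{k+1}$ (to $D_p$) and once with $a=q^{-k}$ (to $\langle S_{\eta^*},S_{\eta^*}\rangle$) yields your constant $c_m=[m]_{q^{k+1}}!/[m]_{q^{-k}}!=q^{k\binom{m}{2}}[m]_{q^{k+1}}!/[m]_{q^{k}}!$ directly and manifestly block by block. Your projection route, as sketched, does not go through: $O_{I,J}$ is \emph{not} self-adjoint for the Cherednik inner product (the adjoint of $T_i$ is $T_i^{-1}$), so it is an oblique, not orthogonal, projection; block-locality of the true orthogonal projection is unclear because $W(z)$ couples variables across blocks; and pinning down $c_m$ from the single case $p=1$, $n_0=0$ presupposes exactly the uniformity in $n_0$ and in the other blocks that has to be proved.

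Two smaller points. In Step 3 the claim that $m(j)$ is unchanged for $j\le n_p$ is not right: incrementing the strictly largest part increases $m(j)$ by one for each $j\le n_p$; what is invariant is $n-m(j)$, since $n$ also increases by one, and that is what makes the old denominator factors cancel (your final ratio is nevertheless correct, and the role of the hypothesis $n_p>n_j$ is as you say). Your parenthetical observation that $D_1(m;0;0,0,k;q)=\langle 1,1\rangle_{q,q^{k+1}}$ by absorbing the extra factors into the weight at parameter $k+1$ is correct and a useful consistency check, though the paper does not use it.
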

Using the following Lemma reclaim the result for the $a=b=0$ case of Conjecture \ref%
{forresters conjecture} proved by Baker and Forrester in \cite{baker}. Although the result is already know to be true, the following highlights the strong connection between the conjectured constant terms and Macdonald polynomial theory. On this point the special case of Conjecture \ref{forresters conjecture} corresponding to the $q$-Morris identity is well known to relate to Macdonald polynomial theory and furthermore has generalisations involving the Macdonald polynomial in an identity due to Kaneko \cite{kaneko}.
\begin{lemma}
\label{kind of kadells}Let $J=\left\{ r,...,r+s\right\} \subseteq \left\{1,\ldots,n \right\}$ and $h\left(
z\right) $ be antisymmetric with respect to $z_{j},$ $j\in J$ then 
\begin{equation*}
\text{\rm{CT}}\bigg( \prod_{r\leq i<j\leq r+s}\left( z_{i}-az_{j}\right) h\left(
z\right) \bigg) =\frac{\left[ s\right] _{a}!}{s!}\text{\rm{CT}}\bigg(
\prod_{r\leq i<j\leq r+s}\left( z_{i}-z_{j}\right) h\left( z\right) \bigg) .
\label{kadells}
\end{equation*}
\end{lemma}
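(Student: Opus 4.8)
The plan is to expand the product $\prod_{r\le i<j\le r+s}(z_i-az_j)$ into a sum of monomials in $a$ and exploit the antisymmetry of $h$ to collapse the sum. Write $m:=s+1=|J|$ and relabel the $m$ variables $z_r,\dots,z_{r+s}$ as $w_1,\dots,w_m$. A standard identity gives
\begin{equation*}
\prod_{1\le i<j\le m}(w_i-aw_j)=\sum_{\sigma\in S_m}(-a)^{\mathrm{inv}(\sigma)}\,w_{\sigma(1)}^{m-1}w_{\sigma(2)}^{m-2}\cdots w_{\sigma(m)}^{0},
\end{equation*}
where $\mathrm{inv}(\sigma)$ counts inversions; equivalently this is the expansion of a deformed Vandermonde obtained from $\det(w_i^{m-j})$ by the substitution that introduces the parameter $a$. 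Each summand is, up to the scalar $(-a)^{\mathrm{inv}(\sigma)}$, a permutation of the monomial $w^{\delta_m}$ with $\delta_m=(m-1,m-2,\dots,1,0)$.

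The key step is that, since $h$ is antisymmetric under the transpositions $s_j$ for $j\in J$ (equivalently, $s_i h=-h$ for $i$ in the relevant index range), one has $\mathrm{CT}\big(w_{\sigma(1)}^{m-1}\cdots w_{\sigma(m)}^{0}\,h\big)=\mathrm{sgn}(\sigma)^{-1}\mathrm{CT}\big(w^{\delta_m}h\big)=\mathrm{sgn}(\sigma)\,\mathrm{CT}\big(w^{\delta_m}h\big)$, because permuting the variables in $w^{\delta_m}$ back to standard order costs a sign $\mathrm{sgn}(\sigma)$ from $h$ and the constant term is invariant under relabelling the integration variables. Therefore
\begin{equation*}
\mathrm{CT}\Big(\prod_{1\le i<j\le m}(w_i-aw_j)\,h\Big)=\Big(\sum_{\sigma\in S_m}(-a)^{\mathrm{inv}(\sigma)}\mathrm{sgn}(\sigma)\Big)\,\mathrm{CT}\big(w^{\delta_m}h\big).
\end{equation*}
Now $\sum_{\sigma\in S_m}(-a)^{\mathrm{inv}(\sigma)}\mathrm{sgn}(\sigma)=\sum_{\sigma}(-1)^{\mathrm{inv}(\sigma)}(-a)^{\mathrm{inv}(\sigma)}\cdot(-1)^{-\mathrm{inv}(\sigma)}$... more cleanly, $\mathrm{sgn}(\sigma)=(-1)^{\mathrm{inv}(\sigma)}$, so the bracket is $\sum_{\sigma\in S_m}a^{\mathrm{inv}(\sigma)}=[m]_a!$ by the stated identity $\sum_{\sigma\in S_m}q^{l(\sigma)}=[m]_q!$ (with $q$ replaced by $a$). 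Setting $a=1$ specialises this bracket to $m!=s!\cdot(s+1)/1$... rather, running the same computation with $a=1$ gives $\mathrm{CT}(\prod(w_i-w_j)h)=m!\,\mathrm{CT}(w^{\delta_m}h)$ — wait, $\sum_\sigma 1^{\mathrm{inv}(\sigma)}=m!=s!$? No: $m=s+1$, so it is $(s+1)!$. Dividing the two relations eliminates $\mathrm{CT}(w^{\delta_m}h)$ and yields the ratio $[s+1]_a!/(s+1)!$. To match the stated form $[s]_a!/s!$ I should instead index the product and the symmetric group by the $s+1$ indices but observe that the paper writes $[s]_a!$ and $s!$; so I would either adjust the count to $[s+1]_a!$ vs the intended normalisation, or — more likely — recognise that the antisymmetry is with respect to $s+1$ variables and the claimed constants $[s]_a!$, $s!$ are the paper's shorthand for the $q$-factorial and factorial of $|J|-1$, in which case no discrepancy arises once the conventions in $K_{I,J}(t)$ are matched.

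The main obstacle is bookkeeping rather than conceptual: one must be careful that $h$'s antisymmetry is exactly with respect to the consecutive block $\{r,\dots,r+s\}$ so that every transposition needed to reorder $w^{\delta_m}$ lies in the group under which $h$ changes sign, and one must correctly track whether the relevant factorials are those of $s$ or of $s+1=|J|$ to reconcile with the normalising constant notation used elsewhere in the paper. Once the deformed-Vandermonde expansion and the sign cancellation are set up, invoking $\sum_{\sigma\in S_m}q^{l(\sigma)}=[m]_q!$ twice (once at $q=a$, once at $q=1$) and dividing completes the argument.
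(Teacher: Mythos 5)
Your strategy is in essence the same as the paper's (exploit antisymmetry of $h$ together with $\sum_{\sigma}a^{l(\sigma)}=[m]_a!$), but the step you lean on is not sound as written. The ``standard identity'' $\prod_{1\le i<j\le m}(w_i-aw_j)=\sum_{\sigma\in S_m}(-a)^{\mathrm{inv}(\sigma)}w_{\sigma(1)}^{m-1}\cdots w_{\sigma(m)}^{0}$ is \emph{false} as a polynomial identity for $m\ge 3$: already for $m=3$ one has $(w_1-aw_2)(w_1-aw_3)(w_2-aw_3)=\sum_{\sigma\in S_3}(-a)^{\mathrm{inv}(\sigma)}w^{\sigma\delta_3}+(a^2-a)\,w_1w_2w_3$, so the deformed Vandermonde contains monomials whose exponent vectors have repeated entries. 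Your argument survives only because every such monomial $w^{\mu}$ with $\mu_i=\mu_j$ satisfies $\mathrm{CT}(w^{\mu}h)=-\mathrm{CT}(w^{\mu}h)=0$ against the antisymmetric $h$ (swap $w_i\leftrightarrow w_j$ and use relabelling invariance of the constant term) --- but that observation is the missing ingredient and must be stated; without it you are quoting a false identity. Note this is precisely the role played in the paper's proof by first antisymmetrising the product over $S_J$, which kills those symmetric correction terms before any expansion is read off.

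The second issue is the unresolved factor of $s$ versus $s+1$. Your computation correctly produces $[m]_a!/m!$ with $m=s+1=|J|$ the number of antisymmetrised variables, and you should have committed to that answer rather than hedging: the way the lemma is actually invoked in Theorem \ref{big ct theorem} (producing $[n_1]_{q^{k+1}}!/[n_1]_{q^{-k}}!$ for a block of $n_1$ variables) confirms that the intended constant is the $q$-factorial of the \emph{number of variables}, and the paper's literal statement $[s]_a!/s!$ (and its own division by $s!$ rather than $|S_J|=(s+1)!$) contains the same off-by-one slippage you detected. A complete proof needs to fix this indexing explicitly, e.g.\ by checking the case $s=1$, where $\mathrm{CT}((z_r-az_{r+1})h)=(1+a)\,\mathrm{CT}(z_rh)=\tfrac{[2]_a!}{2!}\,\mathrm{CT}((z_r-z_{r+1})h)$, not $\tfrac{[1]_a!}{1!}$ times it. With the repeated-exponent remark added and the count pinned to $|J|$, your argument is correct and is essentially the paper's argument reorganised around a direct monomial expansion rather than an average over $S_J$.
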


\begin{proof}
Let $S_{J}=\left\langle s_{j};j=r,...,r+s-1\right\rangle .$ For any
permutation $\sigma \in S_{J},$ the operation $z\rightarrow \sigma z$ leaves
the constant term unchanged. Hence%
\begin{equation*}
\text{CT}\bigg( \prod_{r\leq i<j\leq r+s}\left( z_{i}-az_{j}\right) h\left(
z\right) \bigg) =\text{CT}\bigg( \prod_{r\leq i<j\leq r+s}\left( z_{\sigma
\left( i\right) }-az_{\sigma \left( j\right) }\right) h\left( \sigma
z\right) \bigg) .
\end{equation*}%
Since $h\left( x\right) $ is antisymmetric $h\left( \sigma x\right) =\left(
-1\right) ^{l\left( \sigma \right) }h\left( x\right) ,$ summing over all
permutations gives%
\begin{equation*}
\text{CT}\bigg( \prod_{r\leq i<j\leq r+s}\left( z_{i}-az_{j}\right)
h\left( z\right) \bigg)=\frac{1}{s!}\text{CT}\bigg( \Big( \sum_{\sigma \in S_{J}}\left(
-1\right) ^{l\left( \sigma \right) }\prod_{r\leq i<j\leq r+s}\left(
z_{\sigma \left( i\right) }-az_{\sigma \left( j\right) }\right) \Big)
h\left( z\right) \bigg) .
\end{equation*}%
Since 
\begin{equation}
\sum_{\sigma \in S_{J}}\left( -1\right) ^{l\left( \sigma \right)
}\prod_{r\leq i<j\leq r+s}\left( z_{\sigma \left( i\right) }-az_{\sigma
\left( j\right) }\right) =\sum_{\omega \in S_{J}}\left( -1\right) ^{l\left( \omega \right) }\omega
\sum_{\sigma \in S_{J}}\left( -a\right) ^{l\left( \sigma \right) }z^{\sigma
\delta }. \label{finding constant term}
\end{equation}%
Letting $\omega=\sigma^{-1}$ we see that the constant term of (\ref{finding constant term}) is $\sum_{\sigma \in S_{J}}a^{l\left( \sigma \right)
}$. The result follows from the identity  \cite{stanley} $\sum_{\sigma \in S_{J}}a^{l\left( \sigma \right)
}=[s]_{a}!$.
\hfill $\square$ \end{proof}
\bigskip 
\begin{theorem} \label{big ct theorem}
\bigskip We have 
\begin{eqnarray}
D_{1}\left( n_{1};n_{0};0,0,k;q\right) &=&\frac{\Gamma _{q^{k+1}}\left(
n_{1}+1\right) }{\left( \Gamma _{q}\left( 1+k\right) \right) ^{n}}%
\prod_{j=0}^{n_{1}-1}\frac{\Gamma _{q}\left( \left( k+1\right)
(j+1)+kn_{0}\right) }{\Gamma _{q}\left( \left( k+1\right) j+kn_{0}+1\right) }
\notag \\
&&\times \prod_{l=0}^{n_{0}-1}\frac{\Gamma _{q}\left( 1+k(l+1\right) )}{%
\Gamma _{q}\left( 1+kl\right) }.  \label{line2a}
\end{eqnarray}
\end{theorem}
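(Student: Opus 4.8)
The plan is to identify the constant term $D_{1}(n_{1};n_{0};0,0,k;q)$ with the inner product $\langle S_{\eta^{*}}^{(I,J)},S_{\eta^{*}}^{(I,J)}\rangle_{q,q^{k}}$ for the special composition $\eta^{*}=\eta^{*}_{(n_{0};n_{1})}$, for which Corollary \ref{interesting eta} already supplies a closed form. By Theorem \ref{special eta thm}, with $\eta^{*}=(0^{n_{0}},\delta_{n_{1}})$ and $(I,J)=(\emptyset,J^{n_{0},n_{1}})$, we have $S_{\eta^{*}}^{(I,J)}(z;q,q^{k})=\Delta_{t}^{n_{0},N_{p}}(z)$ with $p=1$, i.e.\ $S_{\eta^{*}}^{(I,J)}(z)=\prod_{n_{0}+1\leq i<j\leq n}(z_{i}-q^{-k}z_{j})$. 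Writing out the constant term inner product (\ref{inner product}) for this polynomial, using that under $z\mapsto z^{-1}$, $q\mapsto q^{-1}$, $t\mapsto t^{-1}$ the factor $(z_{i}-q^{-k}z_{j})$ becomes $(z_{i}^{-1}-q^{k}z_{j}^{-1})$, and that the weight $W(z;q,q^{k})=\prod_{i<j}(z_{i}/z_{j};q)_{k}(qz_{j}/z_{i};q)_{k}$, one sees that $\langle S_{\eta^{*}}^{(I,J)},S_{\eta^{*}}^{(I,J)}\rangle_{q,q^{k}}$ is \emph{almost} exactly $D_{1}(n_{1};n_{0};0,0,k;q)$ — the discrepancy being that the inner product carries the extra factor $\prod_{n_{0}+1\leq i<j\leq n}(z_{j}-q^{k+1}z_{i})$ coming from pairing $\Delta_{t}$ against its inverse-variable image, whereas the constant term in Conjecture \ref{forresters conjecture} has the two Vandermonde-type factors written in the combination $(z_{i}-q^{k+1}z_{j})(z_{i}^{-1}-q^{k}z_{j}^{-1})$.

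First I would carefully expand $S_{\eta^{*}}^{(I,J)}(z;q,q^{k})\,S_{\eta^{*}}^{(I,J)}(z^{-1};q^{-1},q^{-k})\,W(z;q,q^{k})$ and match it, up to an explicit monomial/sign prefactor, with the integrand of $D_{1}(n_{1};n_{0};0,0,k;q)$. Here I expect to need Lemma \ref{kind of kadells} (or a direct symmetrisation argument in the same spirit): the antisymmetry of $S_{\eta^{*}}^{(I,J)}$ in the variables $z_{n_{0}+1},\ldots,z_{n}$ lets one replace a product $\prod(z_{i}-q^{k+1}z_{j})$ acting against an antisymmetric function by $([n_{1}]_{q^{k+1}}!/n_{1}!)\prod(z_{i}-z_{j})$, and similarly reconcile the ordering/orientation of the Vandermonde factors between the two expressions. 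Bookkeeping of the powers of $q$ and of $(1-q)$ that arise from converting between $(z_{i}-q^{-k}z_{j})$ and $(z_{i}^{-1}-q^{k}z_{j}^{-1})$ — essentially the identity $(1-x)/(1-x^{-1})=-x$ used repeatedly — will produce an explicit scalar $C(n_{0},n_{1},k;q)$ with $D_{1}(n_{1};n_{0};0,0,k;q)=C\cdot\langle S_{\eta^{*}}^{(I,J)},S_{\eta^{*}}^{(I,J)}\rangle_{q,q^{k}}$.

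Having pinned down $C$, I would substitute the evaluation (\ref{the answer}) of Corollary \ref{interesting eta} for $\langle S_{\eta^{*}}^{(I,J)},S_{\eta^{*}}^{(I,J)}\rangle_{q,q^{k}}$, namely
\[
\frac{[n_{1}]_{q^{k}}!\,[n_{1}+nk]_{q}!\,(1-q)^{n_{1}}}{([k]_{q}!)^{n}\,(q^{n_{1}(k+1)+n_{0}k};q^{-(k+1)})_{n_{1}}\,q^{k(n_{1}-1)n_{1}/2}},
\]
and then rewrite everything in terms of $q$-Gamma functions via $\Gamma_{q}(m+1)=[m]_{q}!$ and $[m]_{q}!=(q;q)_{m}/(1-q)^{m}$. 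The $q$-Pochhammer $(q^{n_{1}(k+1)+n_{0}k};q^{-(k+1)})_{n_{1}}$ telescopes into the ratio $\prod_{j=0}^{n_{1}-1}\Gamma_{q}((k+1)(j+1)+kn_{0})/\Gamma_{q}((k+1)j+kn_{0}+1)$ after reversing the product (again using $(1-x)/(1-x^{-1})=-x$ to handle the base $q^{-(k+1)}$), the factor $[n_{1}]_{q^{k}}!$ combined with leftover powers of $q$ reassembles as $\Gamma_{q^{k+1}}(n_{1}+1)$ times a telescoping product over $l$, and $([k]_{q}!)^{-n}$ becomes $(\Gamma_{q}(1+k))^{-n}$. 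Checking that the powers of $q$ and $(1-q)$ balance between $C$, the $a=b=0$ specialisation of Conjecture \ref{forresters conjecture}, and this rewriting is the main obstacle: it is a purely mechanical but delicate computation, and getting the exponent $k(n_{1}-1)n_{1}/2$ and the shift structure of the $\Gamma_{q}$ arguments to agree exactly is where an error would most easily creep in. Once the two sides match symbol for symbol, (\ref{line2a}) follows.
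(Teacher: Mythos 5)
Your proposal is correct and follows essentially the same route as the paper: identify $S_{\eta^{*}}^{(I,J)}$ with $\Delta_{t}^{n_{0},\{n_{1}\}}$ via Theorem \ref{special eta thm}, write the inner product as a constant term, apply Lemma \ref{kind of kadells} twice to convert the factors $(z_{i}-q^{-k}z_{j})$ and $(z_{i}^{-1}-q^{k}z_{j}^{-1})$ into those of $D_{1}$ (yielding the scalar $[n_{1}]_{q^{k+1}}!/[n_{1}]_{q^{-k}}!$), substitute Corollary \ref{interesting eta}, and finish with the $q$-Gamma bookkeeping. The paper carries out exactly the mechanical verification you flag as the delicate step, using $[n_{1}]_{q^{k}}!/[n_{1}]_{q^{-k}}!=q^{k(n_{1}-1)n_{1}/2}$ to cancel the power of $q$ in (\ref{the answer}).
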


\begin{proof}
With $\eta ^{\ast }_{(n_{0},n_{1})}$ defined by (\ref{simple special eta}), Theorem \ref{special eta thm} gives
\begin{equation*}
S_{\eta ^{\ast }}\left( z\right) =\Delta_t^{n_0,\{n_1\}}(z)
\end{equation*}%
and hence 
\begin{eqnarray}
\left\langle S_{\eta ^{\ast }},S_{\eta ^{\ast }}\right\rangle _{q,q^{t}} =
\text{CT}\bigg(\prod_{ n_{0}+1\leq i<j\leq n}( z_{i}-q^{-k}z_{j}) ( z_i^{-1}-q^{k}z_j^{-1}) 
\prod_{1\leq i<j\leq n}\left( \frac{z_{i}}{z_{j}}%
;q\right) _{k}\left( q\frac{z_{j}}{z_{i}};q\right) _{k}\bigg).  \label{my d1}
\end{eqnarray}%
To be able to apply Lemma \ref{kind of kadells} we view $\left( \ref{my d1}\right) $ as
\begin{equation*}
\text{\rm{CT}}\bigg( \prod_{n_{0}+1\leq i<j\leq n}(
z_{i}-q^{-k}z_{j}) h\left( z\right) \bigg)
\end{equation*}
where $h\left( z\right) $ is antisymmetric with respect to $%
z_{n_{0}+1},...,z_{n}.$ Using Lemma \ref{kind of kadells} twice we
see that
\begin{equation*}
D_{1}\left( n_{1};n_{0};0,0,k;q\right) =\frac{\left[ n_{1}\right] _{q^{k+1}}!%
}{\left[ n_{1}\right] _{q^{-k}}!}\left\langle S_{\eta ^{\ast }},S_{\eta
^{\ast }}\right\rangle _{q,q^{t}}.
\end{equation*}%
Using (\ref{the answer}) and 
\begin{equation*}
\frac{\left[ n_{1}\right] _{q^{k}}!}{\left[ n_{1}\right] _{q^{-k}}!}=q^{
\frac{k\left( n_{1}-1\right) n_{1}}{2}}
\end{equation*} 
we obtain
\begin{equation}
D_{1}\left( n_{1};n_{0};0,0,k;q\right)=\frac{[n_{1}]_{q^{k+1}}![n_{1}+nk]_{q}!\left( 1-q\right)
^{n_{1}}}{([k]_{q}!)^{n}\left( q^{n_{1}\left( k+1\right)
+n_{0}k};q^{-\left( k+1\right) }\right) _{n_{1}}}. \label{D1 on the way}
\end{equation}
To show (\ref{D1 on the way}) is equivalent to (\ref{line2a}) we firstly use the identity \cite{qtheory} $\Gamma _{q}\left( 1+n\right) =[n]_{q}!$ to show 
\begin{equation*}
\frac{\lbrack n_{1}]_{q^{k+1}}!}{([k]_{q}!)^{n}}=\frac{\Gamma
_{q^{k+1}}\left( n_{1}+1\right) }{\left( \Gamma _{q}\left( 1+k\right)
\right) ^{n}}
\end{equation*}%
and%
\begin{equation*}
\prod_{l=0}^{n_{0}-1}\frac{\Gamma _{q}\left( 1+k(l+1\right) )}{\Gamma
_{q}\left( 1+kl\right) }=[n_{0}k]_{q}!.
\end{equation*}%
Lastly, to show
\begin{equation*}
\frac{\lbrack n_{1}+nk]_{q}!\left( 1-q\right) ^{n_{1}}}{\left(
q^{n_{1}\left( k+1\right) +n_{0}k};q^{-\left( k+1\right) }\right) _{n_{1}}}%
=[n_{0}k]_{q}!\prod_{j=0}^{n_{1}-1}\frac{[\left( k+1\right) j+kn_{0}+k]_{q}!%
}{\left[ \left( k+1\right) j+kn_{0}\right] _{q}!},
\end{equation*}%
we expand both sides and compare terms.
\hfill $\square$ \end{proof}

\bigskip 
Our final result in this section is proving the specialisation $a=b=0$ of Conjecture \ref{conj2}, which until now has not been done. We do this by finding the analogue of Theorem \ref{big ct theorem} for $\eta^{*}_{(n_{0},N_{p})}$ and stating the result as a corollary. We begin with a generalisation of Lemma \ref{kind of kadells}.
\begin{lemma}
\label{newer kadell}
Let $J\subseteq \left\{ 1,...n\right\} =J_{1}\cup \ldots \cup
J_{s}$ and $h\left( z\right) $ be antisymmetric with respect to $z_{j},$ $%
j\in J$ then 
\begin{equation*}
\text{\rm{CT}}\bigg(\prod_{\alpha =1}^{s}\prod_{\substack{ \min \left(
\widetilde{J}_{\alpha }\right) \leq i  \\ <j\leq \max \left( \widetilde{J}_{\alpha }\right) }}\left(
z_{i}-az_{j}\right) h\left( z\right) \bigg)\label{new kadells} 
=\prod_{\alpha =1}^{s}\frac{\left[ J_{\alpha }\right] _{a}!}{\left\vert
J_{\alpha }\right\vert !}\text{\rm{CT}} \bigg( \prod_{\alpha =1}^{s}\prod
_{\substack{ \min \left(\widetilde{J}_{\alpha }\right) \leq i  \\ <j\leq \max \left(\widetilde{J}_{\alpha }\right) }}\left( z_{i}-z_{j}\right) h\left( z\right) \bigg)  .
\end{equation*}
\end{lemma}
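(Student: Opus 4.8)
The plan is to deduce Lemma~\ref{newer kadell} from $s$ successive applications of Lemma~\ref{kind of kadells}, peeling off one block $J_\alpha$ at a time. The structural fact that makes this work is that the augmented blocks $\widetilde{J}_1,\ldots,\widetilde{J}_s$ are pairwise disjoint: each $J_\alpha$ is a maximal run of consecutive integers in $J$, so if $J_\alpha=\{p,\ldots,q\}$ then $q+1\notin J$, whence for $\alpha<\beta$ one has $\min(J_\beta)\ge q+2>q+1=\max(\widetilde{J}_\alpha)$ and the intervals $\widetilde{J}_\alpha$, $\widetilde{J}_\beta$ do not meet. Consequently the generalised Vandermonde factors attached to different blocks involve pairwise disjoint sets of variables, and moreover $\{j,j+1\}\subseteq\widetilde{J}_\alpha$ whenever $j\in J_\alpha$.

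First I would write the left-hand side of the lemma as $\mathrm{CT}\!\left(\prod_{\min(\widetilde{J}_1)\le i<j\le\max(\widetilde{J}_1)}(z_i-az_j)\,g_1(z)\right)$, where $g_1(z)$ collects the remaining generalised Vandermonde factors (for $\beta=2,\ldots,s$) times $h(z)$. The key observation is that $g_1$ is antisymmetric with respect to $z_j$ for every $j\in J_1$: the factors collected in $g_1$ involve only variables indexed by $\widetilde{J}_2\cup\ldots\cup\widetilde{J}_s$, which by disjointness misses $\{j,j+1\}$, so they are fixed by each $s_j$ with $j\in J_1$, while $h$ supplies the sign. Thus Lemma~\ref{kind of kadells} applies with $J_1$ in the role of $J$ and $g_1$ in the role of $h$, replacing the factor $\prod_{i<j}(z_i-az_j)$ over $\widetilde{J}_1$ by the ordinary Vandermonde over $\widetilde{J}_1$ and producing the scalar that Lemma~\ref{kind of kadells} prescribes for the block $J_1$.

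I would then iterate. After the first step the integrand is the ordinary Vandermonde over $\widetilde{J}_1$ times $g_1$; grouping everything except the $\beta=2$ factor into a function $g_2$, the same reasoning shows $g_2$ is antisymmetric with respect to $z_j$ for $j\in J_2$ — the freshly inserted ordinary Vandermonde over $\widetilde{J}_1$ is \emph{symmetric}, hence fixed by the $s_j$ with $j\in J_2$ (again by disjointness), while $h$ carries the sign. Applying Lemma~\ref{kind of kadells} to $J_2$, and then to $J_3,\ldots,J_s$ in turn, after $s$ steps every factor has become an ordinary Vandermonde and the scalars have multiplied to give precisely $\prod_{\alpha=1}^{s}[J_\alpha]_a!/|J_\alpha|!$, which is the assertion.

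No step here is genuinely hard; the only thing that needs care is the bookkeeping in the iteration — checking at each stage that the auxiliary function $g_\alpha$ retains the antisymmetry needed to invoke Lemma~\ref{kind of kadells} — and this rests entirely on the disjointness of the $\widetilde{J}_\alpha$ noted at the outset. If a non-inductive presentation is preferred, the argument can instead be run in one pass: average the integrand over $S_{\widetilde{J}_1}\times\ldots\times S_{\widetilde{J}_s}$ using the antisymmetry of $h$ and the invariance of $\mathrm{CT}$ under permutations of the variables, observe that the antisymmetriser factorises over the variable-disjoint blocks, and on each block quote the alternating-sum evaluation already carried out inside the proof of Lemma~\ref{kind of kadells} (see (\ref{finding constant term})).
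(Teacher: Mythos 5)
Your proof is correct: the paper in fact states Lemma \ref{newer kadell} with no proof at all, and your block-by-block iteration of Lemma \ref{kind of kadells} --- justified by the observation that the augmented blocks $\widetilde{J}_{\alpha}$ are pairwise disjoint, so that at each stage the auxiliary function retains the antisymmetry needed to invoke the one-block lemma --- is exactly the routine argument the paper leaves implicit. (One cosmetic point: the ordinary Vandermonde over $\widetilde{J}_{1}$ inserted after the first step is antisymmetric in its own variables, not symmetric; what matters, as your parenthetical correctly notes, is that it is fixed by $s_{j}$ for $j\in J_{2}$ because it involves none of those variables.)
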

\bigskip 
\begin{theorem}
\label{corollary dp}We have%
\begin{eqnarray}
D_{p}\left( N_{p};n_{0};0,0,k;q\right) &=&\prod_{\alpha =1}^{p}\left( \left[
n_{\alpha }\right] _{q^{k+1}}!\right) \frac{[kn+\max \left( N_{P}\right)
]_{q}!}{[k]_{q}!^{n}}  \notag \\
&&\times \frac{\left( 1-q\right) ^{\max \left( N_{P}\right) }}{\Pi
_{i=1}^{\max(N_{p})}\left( 1-q^{i}q^{k\left( n-m\left( i\right) \right)
}\right)  }.
\label{dp formula}
\end{eqnarray}
\end{theorem}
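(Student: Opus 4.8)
The plan is to mirror the proof of Theorem~\ref{big ct theorem}, with Corollary~\ref{interesting eta} replaced by the general inner-product evaluation of Theorem~\ref{lemma dp} and Lemma~\ref{kind of kadells} replaced by its multi-block version Lemma~\ref{newer kadell}. First I would take $\eta^{*}=\eta^{*}_{(n_{0};N_{p})}$, $I=\emptyset$ and $J=J^{n_0,N_p}$, so that Theorem~\ref{special eta thm} gives $S_{\eta^{*}}^{(I,J)}(z;q,q^{k})=\Delta_{t}^{n_0,N_p}(z)$ with $t=q^{k}$. Expanding the definition (\ref{inner product}) of the inner product at $t=q^{k}$, and using $\Delta_{t}^{n_0,N_p}(z^{-1};q^{-1},q^{-k})=\prod_{\alpha=1}^{p}\prod_{\min(\widetilde{J}_{\alpha})\le i<j\le\max(\widetilde{J}_{\alpha})}(z_{i}^{-1}-q^{k}z_{j}^{-1})$ together with the form of $W(z;q,q^{k})$, one gets the $p$-block analogue of (\ref{my d1}): $\langle S_{\eta^{*}}^{(I,J)},S_{\eta^{*}}^{(I,J)}\rangle_{q,q^{k}}$ is the constant term whose integrand coincides with the one defining $D_{p}(N_{p};n_{0};0,0,k;q)$ in Conjecture~\ref{conj2}, except that each factor $(z_{i}-q^{k+1}z_{j})$ is replaced by $(z_{i}-q^{-k}z_{j})$.

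Next I would write this integrand as $\prod_{\alpha}\prod_{\widetilde{J}_{\alpha}}(z_{i}-q^{-k}z_{j})\,h(z)$, where $h(z)$ is the product of the remaining factors $\prod_{\alpha}\prod_{\widetilde{J}_{\alpha}}(z_{i}^{-1}-q^{k}z_{j}^{-1})$ and $W(z;q,q^{k})$. As in the $p=1$ case, $h$ satisfies the antisymmetry hypothesis of Lemma~\ref{newer kadell}, i.e. it is antisymmetric with respect to $z_{j}$ for $j\in J$; verifying this, block by block, is the one genuinely new point. One application of Lemma~\ref{newer kadell} (with $a=q^{-k}$) then rewrites $\langle S_{\eta^{*}}^{(I,J)},S_{\eta^{*}}^{(I,J)}\rangle_{q,q^{k}}$ in terms of the plain Vandermonde integrand $\prod_{\alpha}\prod_{\widetilde{J}_{\alpha}}(z_{i}-z_{j})\,h(z)$, and a second application (with $a=q^{k+1}$) builds the $(z_{i}-q^{k+1}z_{j})$ products back in, yielding
\begin{equation*}
D_{p}(N_{p};n_{0};0,0,k;q)=\prod_{\alpha=1}^{p}\frac{[n_{\alpha}]_{q^{k+1}}!}{[n_{\alpha}]_{q^{-k}}!}\,\langle S_{\eta^{*}}^{(I,J)},S_{\eta^{*}}^{(I,J)}\rangle_{q,q^{k}},
\end{equation*}
exactly as in the passage from (\ref{my d1}) to (\ref{D1 on the way}) for $p=1$.

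Finally, I would substitute the evaluation (\ref{dp inner}) of Theorem~\ref{lemma dp} and simplify. Using $[n_{\alpha}]_{q^{k}}!/[n_{\alpha}]_{q^{-k}}!=q^{kn_{\alpha}(n_{\alpha}-1)/2}$ (already used for $p=1$ in Theorem~\ref{big ct theorem}), the power of $q$ thereby produced cancels the factor $q^{-k\Sigma_{i=1}^{p}n_{i}(n_{i}-1)/2}$ in (\ref{dp inner}), while $\prod_{\alpha}[n_{\alpha}]_{q^{k+1}}!/[n_{\alpha}]_{q^{k}}!$ combines with the factor $\prod_{i=1}^{p}[n_{i}]_{q^{k}}!$ of (\ref{dp inner}) to leave $\prod_{\alpha=1}^{p}[n_{\alpha}]_{q^{k+1}}!$. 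The surviving $q$-factorial and Pochhammer pieces of (\ref{dp inner}) are already in the shape displayed in (\ref{dp formula}), so the identity follows.

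I expect the substantial combinatorial content to have been absorbed already into Theorem~\ref{lemma dp} (the cancellation of the $e/e'$ products and the emergence of the quantities $m(j)$), so that the present argument is essentially bookkeeping; the one place demanding genuine care is the step where Lemma~\ref{newer kadell} is applied — checking simultaneously, for every block $\widetilde{J}_{\alpha}$, that $h$ has the required antisymmetry, and tracking the prefactors $[n_{\alpha}]_{a}!$ through the two applications so that the combinatorial constant comes out as $\prod_{\alpha}[n_{\alpha}]_{q^{k+1}}!/[n_{\alpha}]_{q^{-k}}!$ rather than a shifted variant.
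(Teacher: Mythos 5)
Your proposal follows the paper's own proof essentially verbatim: the paper likewise combines Theorem \ref{special eta thm} to identify $S_{\eta^{*}}^{(I,J)}$ with $\Delta_{t}^{n_0,N_p}(z)$, writes the inner product as the multi-block constant term, applies Lemma \ref{newer kadell} twice, and invokes $[n_{i}]_{q^{k}}!/[n_{i}]_{q^{-k}}!=q^{k n_{i}(n_{i}-1)/2}$ before substituting (\ref{dp inner}). The only difference is that you spell out the bookkeeping that the paper compresses into ``using the methods of the derivation in Theorem \ref{big ct theorem}.''
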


\begin{proof}
With $\eta^{*}_{(n_{0},N_{p})}$ defined by (\ref{harder special eta}), Theorem \ref{special eta thm} implies
\begin{equation*}
S_{\eta ^{\ast }_{(n_{0},N_{p})}}\left( z\right) =\Delta_t^{n_0,N_p}\left( z\right)
\end{equation*}%
and hence the inner product $\left\langle S_{\eta ^{\ast }},S_{\eta ^{\ast
}}\right\rangle _{q,q^{k}}$ can be written as%
\begin{equation*}
\left\langle S_{\eta ^{\ast }},S_{\eta ^{\ast }}\right\rangle _{q,q^{k}}=%
\text{CT}\bigg( \prod_{\alpha =1}^{p}\prod_{\substack{ \min \left( 
\widetilde{J}_{\alpha }\right) \leq i<  \\ j\leq \max \left( \widetilde{J}%
_{\alpha }\right) }}( z_{i}-q^{-k}z_{j}) ( z_i^{-1}-q^{k}z_j^{-1})  \prod_{1\leq i<j\leq N}\left( \frac{z_{i}}{z_{j}}%
;q\right) _{k}\left( q\frac{z_{j}}{z_{i}};q\right) _{k}\bigg) .  \label{SDp}
\end{equation*}
By apply Lemma \ref{newer kadell} twice to $%
\left( \ref{dp inner}\right) $ and noting that 
\begin{equation*}
\frac{\left[ n_{i}\right] _{q^{k}}!}{\left[ n_{i}\right] _{q^{-k}}!}=q^{%
\frac{k\left( n_{i}-1\right) n_{i}}{2}}, 
\end{equation*}
the result is obtained using the methods of the derivation in Theorem \ref{big ct theorem}.
\hfill $\square$ \end{proof}
\bigskip
Conjecture \ref{conj2} is verified by substituting (\ref{dp formula}) into the left hand side of (\ref{final corollary}) and making the obvious simplifications.

\subsection*{Acknowledgement.} 
I would like to thank my supervisor Peter Forrester for his valuable advice and support and also Ole Warnaar for his comments on an earlier draft. This work was supported by an APA scholarship, the ARC and the University of Melbourne.
\bibliographystyle{plain}

\begin{thebibliography}{10}

\bibitem{exchangeterms}
T.~H. Baker, C.~F. Dunkl, and P.~J. Forrester.
\newblock Polynomial eigenfunctions of the {C}alogero-{S}utherland-{M}oser
  models with exchange terms.
\newblock CRM Ser. Math. Phys., pages 37--51. Springer, New York, 2000.

\bibitem{genweight}
T.~H. Baker and P.~J. Forrester.
\newblock {Generalized weight functions and the Macdonald polynomials.}
\newblock 1996.
\newblock q-alg/9603005.

\bibitem{peter4}
T.~H. Baker and P.~J. Forrester.
\newblock The {C}alogero-{S}utherland model and polynomials with prescribed
  symmetry.
\newblock {\em Nuclear Phys. B}, 492, 1997.

\bibitem{peter3}
T.~H. Baker and P.~J. Forrester.
\newblock A {$q$}-analogue of the type {$A$} {D}unkl operator and integral
  kernel.
\newblock {\em Internat. Math. Res. Notices}, (14):667--686, 1997.

\bibitem{baker}
T.~H. Baker and P.~J. Forrester.
\newblock Generalizations of the {$q$}-{M}orris constant term identity.
\newblock {\em J. Combin. Theory Ser. A}, 81(1):69--87, 1998.

\bibitem{bressoud}
D.~M. Bressoud and I.~P. Goulden.
\newblock Constant term identities extending the {$q$}-{D}yson theorem.
\newblock {\em Trans. Amer. Math. Soc.}, 291(1):203--228, 1985.

\bibitem{cherednik} \label{cherednik}
I.~Cherednik.
\newblock Nonsymmetric {M}acdonald polynomials.
\newblock {\em Internat. Math. Res. Notices}, (10):483--515, 1995.

\bibitem{dunkl}
C.~F. Dunkl.
\newblock Orthogonal polynomials of types {$A$} and {$B$} and related
  {C}alogero models.
\newblock {\em Comm. Math. Phys.}, 197(2):451--487, 1998.

\bibitem{selberg}
P.~J. Forrester and S.~O. Warnaar.
\newblock The importance of the {S}elberg integral.
\newblock {\em Bull. Amer. Math. Soc. (N.S.)}, 45(4):489--534, 2008.

\bibitem{gessel}
I.~M. Gessel, L.~Lv, G.~Xin, and Y.~Zhou.
\newblock A proof of {A}skey's conjectured {$q$}-analogue of {S}elberg's
  integral and a conjecture of {M}orris.
\newblock {\em SIAM J. Math. Anal.}, 19(4):969--986, 1988.

\bibitem{hamada}
S.~Hamada.
\newblock Proof of {B}aker-{F}orrester's constant term conjecture for the cases
  {$N_1=2,3$}.
\newblock {\em Kyushu J. Math.}, 56(2):243--266, 2002.

\bibitem{kaneko}
Kaneko. J.
\newblock Constant term identities of {F}orrester-{Z}eilberger-{C}ooper.
\newblock {\em Discrete Math.}, 173(1-3):79--90, 1997.

\bibitem{kato}
Y.~Kato and T.~Yamamoto.
\newblock Jack polynomials with prescribed symmetry and hole propagator of spin
  {C}alogero-{S}utherland model.
\newblock {\em J. Phys. A}, 31(46):9171--9184, 1998.

\bibitem{qtheory}
T.~H. Koornwinder.
\newblock $q$-special functions, an overview, 2005.
\newblock arXiv:math/0511148v1.

\bibitem{macdonaldagain}
I.~G. Macdonald.
\newblock A new class of symmetric functions.
\newblock {\em I.R.M.A}, 20:131--171, 1988.

\bibitem{macdonald}
I.~G. Macdonald.
\newblock {\em Symmetric functions and {H}all polynomials}.
\newblock The Clarendon Press Oxford University Press, New York, second
  edition, 1995.

\bibitem{affine}
I.~G. Macdonald.
\newblock {\em Affine {H}ecke algebras and orthogonal polynomials}, volume 157
  of {\em Cambridge Tracts in Mathematics}.
\newblock Cambridge University Press, Cambridge, 2003.

\bibitem{marshallthesis}
D.~Marshall.
\newblock {\em Macdonald polynomials}.
\newblock Masters thesis, the {U}niversity of {M}elbourne, 1999.

\bibitem{marshallmacdonald}
D.~Marshall.
\newblock Symmetric and nonsymmetric {M}acdonald polynomials.
\newblock {\em Ann. Comb.}, 3(2-4):385--415, 1999.
\newblock On combinatorics and statistical mechanics.

\bibitem{mimachi}
K.~Mimachi and M.~Noumi.
\newblock A reproducing kernel for nonsymmetric {M}acdonald polynomials.
\newblock {\em Duke Math. J.}, 91(3):621--634, 1998.

\bibitem{sahi}
S.~Sahi.
\newblock A new scalar product for nonsymmetric {J}ack polynomials.
\newblock {\em Internat. Math. Res. Notices}, (20):997--1004, 1996.

\bibitem{stanley}
R.~P. Stanley.
\newblock {\em Enumerative combinatorics. {V}ol. 1}.
\newblock Cambridge University Press, New York/-Cambridge, 1997.

\end{thebibliography}

\vspace{6cm}
\subsection*{}
\begin{flushright}
Wendy Baratta \\
Department of Mathematics \\
University of Melbourne \\
Australia \\
(E-mail: w.baratta@ms.unimelb.edu.au)
\end{flushright}

\end{document}